% !TEX TS-program = pdflatex
% !TEX encoding = UTF-8 Unicode

% This is a simple template for a LaTeX document using the "article" class.
% See "book", "report", "letter" for other types of document.

\documentclass[11pt]{article} % use larger type; default would be 10pt

\usepackage[utf8]{inputenc} % set input encoding (not needed with XeLaTeX)

%%% Examples of Article customizations
% These packages are optional, depending whether you want the features they provide.
% See the LaTeX Companion or other references for full information.

%%% PAGE DIMENSIONS
\usepackage[margin=1in]{geometry} % to change the page dimensions
\geometry{letterpaper} % or a4paper (Europe) or a5paper or....
% \geometry{margin=2in} % for example, change the margins to 2 inches all round
% \geometry{landscape} % set up the page for landscape
%   read geometry.pdf for detailed page layout information

\usepackage{graphicx} % support the \includegraphics command and options
% \usepackage[parfill]{parskip} % Activate to begin paragraphs with an empty line rather than an indent

%%% PACKAGES
\usepackage{booktabs} % for much better looking tables
\usepackage{array} % for better arrays (eg matrices) in maths
\usepackage{paralist} % very flexible & customisable lists (eg. enumerate/itemize, etc.)
\usepackage{verbatim} % adds environment for commenting out blocks of text & for better verbatim
\usepackage{subfig} % make it possible to include more than one captioned figure/table in a single float
\usepackage{mathrsfs}
\usepackage{amssymb}
\usepackage{amsthm}
\usepackage{amsmath,amsfonts,amssymb,esint}
\usepackage{graphics}
\usepackage{enumerate}
\usepackage{mathtools}
\usepackage{xfrac}
\usepackage{bbm}
%

% These packages are all incorporated in the memoir class to one degree or another...

%%% HEADERS & FOOTERS
\usepackage{fancyhdr} % This should be set AFTER setting up the page geometry
\pagestyle{fancy} % options: empty , plain , fancy
 % customise the layout...
\lhead{}\chead{}\rhead{}
\lfoot{}\cfoot{\thepage}\rfoot{}

%%% SECTION TITLE APPEARANCE
%\usepackage{sectsty}
%\allsectionsfont{\sffamily\mdseries\upshape} % (See the fntguide.pdf for font help)
% (This matches ConTeXt defaults)

%%% ToC (table of contents) APPEARANCE
\usepackage[nottoc,notlof,notlot]{tocbibind} % Put the bibliography in the ToC
\usepackage[titles,subfigure]{tocloft} % Alter the style of the Table of Contents

 % No bold!

%%% EQUATION Numbering Starts Fresh at each Section
\def\theequation{\thesection.\arabic{equation}}
\numberwithin{equation}{section}

%%% Theorems etc.
\newtheorem{Thm}{Theorem}

\newtheorem{theorem}{Theorem}[section]

\newtheorem{Prop}[theorem]{Proposition}
\newtheorem{Def}[theorem]{Definition}
\newtheorem{Lem}[theorem]{Lemma}
\theoremstyle{definition}

\newtheorem*{Rem}{Remark}

\newcommand{\eps}{\varepsilon}

\renewcommand*{\tilde}{\widetilde}
\renewcommand*{\hat}{\widehat}

\newcommand{\mb}{\mathbb}
\newcommand{\mc}{\mathcal}
\newcommand{\ol}{\overline}
\newcommand{\onabla}{\overline{\nabla}}
\newcommand{\oDelta}{\overline{\Delta}}
\newcommand{\mean}[1]{\,-\hskip-1.08em\int_{#1}}
\newcommand{\textmean}[1]{- \hskip-.9em \int_{#1}}

% Proof parts
%\makeatletter
%\newcommand{\proofstep}[2]{%
%  \par
%  \addvspace{\medskipamount}%
%  \noindent\emph{Step #1: #2}\par\nobreak
%  \addvspace{\smallskipamount}%
%  \@afterheading
%}
%\makeatother

%%% For Hyperlinks inside the PDF file
\usepackage[usenames,dvipsnames]{color}
\usepackage[colorlinks=true, pdfstartview=FitV, linkcolor=blue, citecolor=blue, urlcolor=blue]{hyperref}

%%% LABEL appearance on the side of the document
%%% COMMENT this for a non-draft version
%%Citation keys in blue, small on the side
%\providecommand*\showkeyslabelformat[1]{{\normalfont \tiny#1}}
%\usepackage[notref,notcite,color]{showkeys}
%\definecolor{labelkey}{rgb}{0,0,1}

%%% END Article customizations

%%% The "real" document content comes below...

\title{Partial regularity for the steady hyperdissipative fractional Navier--Stokes equations}
\author{Eric Chen\thanks{Department of Mathematics, Princeton University.
{\footnotesize \href{mailto:ecchen@math.princeton.edu}{ecchen@math.princeton.edu}.}
}
}
%\date{} % Activate to display a given date or no date (if empty),
         % otherwise the current date is printed 

\begin{document}

\maketitle

\begin{abstract}
We extend the Caffarelli--Kohn--Nirenberg type partial regularity theory for the steady $5$-dimensional fractional Navier--Stokes equations with external force to the hyperdissipative setting. In our argument we use the methods of Colombo--De Lellis--Massaccesi to apply a blowup procedure adapted from work of Ladyzhenskaya--Seregin.
\end{abstract}

%\tableofcontents 

\section{Introduction}
In this paper we study the partial regularity of suitable weak solutions of the steady fractional Navier--Stokes equations in $\mb{R}^5$. In general, on $\mb{R}^n$ these equations are
\begin{align}
	\begin{cases}
		(-\Delta)^s u+(u\cdot\nabla) u+\nabla p=f,
		\\
		\text{div }u=0.\label{NS5}
	\end{cases}
\end{align}
Here $u:\mb{R}^n\rightarrow\mb{R}^n$ is a velocity field, $p:\mb{R}^n\rightarrow\mb{R}$ is the pressure, and $f:\mb{R}^n\rightarrow\mb{R}^n$ is a divergence-free external force, while $s\geq 0$ gives the power of the fractional Laplacian with Fourier symbol $|\xi|^{2s}$. We will be concerned with the case $s>1$ when $n=5$.

The equations of \eqref{NS5} are exactly the equations satisfied by the time-independent (steady) solutions of the fractional Navier--Stokes equations in $\mb{R}^n$, which are
\begin{align}
    \begin{cases}
    \partial_t u+(-\Delta)^s u+(u\cdot\nabla) u+\nabla p=f,
    \\
    \text{div }u=0.\label{NS5time}
    \end{cases}
\end{align}
When $s=1$ and $n=3$ these become the classical Navier--Stokes equations. These have been studied as physical models also for fractional powers $s$ \cite{Mercado}.

\subsection{Previous results}

In the classical setting $s=1$ and $n=3$, the foundational result of Caffarelli--Kohn--Nirenberg \cite{CKN} constructed suitable weak solutions of \eqref{NS5time} and established that $\mc{H}^1(\mc{S})=0$ for $f\in L^q_{t,x}$ for any $q>\frac{5}{2}$, where
\begin{align}
\mc{S}=\{(x,t):\ u\text{ is not locally bounded at }(x,t)\}.\notag
\end{align}
Here $\mc{S}$ is called the singular set of $u$ and points not in $\mc{S}$ are called regular points. F. Lin \cite{L} later gave a different proof of this result when $f=0$ via a blow-up argument which was expanded upon and extended by Ladyzhenskaya--Seregin \cite{LS} to situations when $f$ lies in a parabolic Morrey space---in particular their result allows for $f\in L^q_{t,x}$ for all $q>\frac{5}{2}$ as in \cite{CKN}. Later I. Kukavica \cite{Ku} weakened the hypothesis on the force term to $f\in L^2_{t,x}$ by simplifying the original proof of Caffarelli--Kohn--Nirenberg by using a Morrey-type estimate due to M. O'Leary \cite{OL}. By now there are also other proofs of the Caffarelli--Kohn--Nirenberg theorem, such as \cite{Vasseur}. 

In the case $s<1$ and $n=3$, Tang--Yu \cite{TY} proved that suitable weak solutions of \eqref{NS5time} satisfy $\mc{H}^{5-4s}(\mc{S})=0$ for $3/4<s<1$. Later Chen--Wei \cite{CW} extended this result to the boundary case $s=3/4$ below which the Sobolev embedding no longer gives the necessary control of terms in energy inequality satisfied by suitable weak solutions. In the case $s>1$ and $n=3$, Colombo--De Lellis--Massaccesi \cite{CDM} have recently shown that $\mc{H}^{5-4s}(\mc{S})=0$ also holds for $1<s\leq\frac{5}{4}$. This improves an earlier result of Katz--Pavlovi\'{c} \cite{KP} in this range, which showed that the Hausdorff dimension of the singular set in space at the time of first blow-up is at most $5-4s$, and connects the result of Caffarelli--Kohn--Nirenberg to the global well-posedness of \eqref{NS5time} when $s\geq\frac{5}{4}$ \cite{JL}. 

The assumption of time-independence leads to better Hausdorff measure estimates than those for the non-steady equation \eqref{NS5time}. Partial regularity for solutions in $\mb{R}^5$ of the steady (time-independent) Navier--Stokes equations \eqref{NS5} was first studied by M. Struwe \cite{Struwe} when $s=1$. The dimension $n=5$ for the steady equations is natural to consider because in the classical setting $s=1$ and $n=3$ of \eqref{NS5time}, time corresponds to $2$ space dimensions (see the dimension table of \cite[p. 774]{CKN}). Struwe showed that $\mc{H}^{1}(\mc{S})=0$. Gerhardt \cite{Gerhardt} proved the corresponding result for the steady equations in dimension $4$ when $s=1$, showing that $\mc{S}=\emptyset$.

In the case $s<1$ of \eqref{NS5}, Tang--Yu \cite{TY2} proved in dimension $3$ that $\mc{H}^{5-6s}(\mc{S})=0$ for $s\in(1/2,5/6)$ and $\mc{S}=\emptyset$ when $s\in[5/6,1]$. Later Guo--Men \cite{GM} generalized this result to corresponding partial regularity statements in the dimensions $4$ and $5$.

\begin{Rem}
We will discuss here the reason that we consider $n=5$ in \eqref{NS5}. The partial regularity results starting from \cite{CKN} rely crucially on use of the local energy inequality satisfied by suitable weak solutions. In particular we can see from the energy inequality \eqref{ssen} that for \eqref{NS5} in dimension $n$ it is important to control the $L^3(\Omega)$ norm of $u$ by its $H^s(\Omega)$ norm in bounded domains $\Omega\subseteq\mb{R}^n$. The Sobolev embedding then requires that $6s>n$, restricting the study the partial regularity theory of \eqref{NS5} to low dimensions for $s\in(1,2)$. Since we already have full regularity ($\mc{S}=\emptyset$) from \cite{TY2} and \cite{Gerhardt} in dimensions $3$ and $4$ when $s\nearrow 1$ it is therefore natural to study partial regularity for \eqref{NS5} when $s>1$ in the next lowest dimension $n=5$. This would also be a natural extension of the partial regularity results when $n=5$ of \cite{Struwe} and \cite{GM}, which together cover $s\leq 1$. In $6$ dimensions, partial regularity for \eqref{NS5} has also been obtained by Dong--Strain \cite{DongStrain}, and the arguments presented below should also work with the appropriate modifications for \eqref{NS5} in $\mb{R}^6$ with $s\in(1,\frac{4}{3}]$.
\end{Rem}

\subsection{Main results}

Our main theorem is the following partial regularity result for suitable weak solutions of \eqref{NS5}:

\begin{Thm}\label{supermain}
    Let $(u,p)$ be a suitable weak solution in $\mb{R}^5$ of \eqref{NS5} as in Definition \ref{suitable} with $s\in(1,2)$ and force $f\in  L^q(\mb{R}^5)$ for some $q>\frac{5}{2s}$. Then $\mc{H}^{7-6s}(\mc{S})=0$ if $s<\frac{7}{6}$, and $u$ is everywhere regular (in fact H\"{o}lder continuous) if $s\geq\frac{7}{6}$. Here $\mc{H}^\beta$ is the Hausdorff measure in $\mb{R}^5$ defined by
    \begin{align}
        \mc{H}^\beta(E)=\lim_{\delta\rightarrow 0}\inf\left\{\sum_i r_i^\beta:\ E\subset\bigcup_i B_{r_i}(x_i,t_i)\text{ and }r_i<\delta\ \forall i\right\},\quad\text{for }E\subseteq\mb{R}^5.\notag
    \end{align}
\end{Thm}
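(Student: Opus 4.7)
The plan is to follow the Caffarelli--Kohn--Nirenberg paradigm adapted to the steady hyperdissipative setting by Colombo--De Lellis--Massaccesi \cite{CDM}: reduce the Hausdorff estimate on $\mc{S}$ to a single-scale $\eps$-regularity criterion and establish the latter by a blowup/compactness argument in the spirit of Ladyzhenskaya--Seregin \cite{LS}. The natural scaling of \eqref{NS5} is $u_\lambda(x)=\lambda^{2s-1}u(\lambda x)$, $p_\lambda(x)=\lambda^{4s-2}p(\lambda x)$, $f_\lambda(x)=\lambda^{4s-1}f(\lambda x)$, under which the Dirichlet-type density $r^{6s-7}\int_{B_r(x_0)}|(-\Delta)^{s/2}u|^2\,dx$ is scale-invariant on $\mb{R}^5$, while $\|f_\lambda\|_{L^q(B_1)}^q=\lambda^{q(4s-1)-5}\|f\|_{L^q(B_\lambda)}^q$ carries a positive power of $\lambda$ since $q>5/(2s)$ implies $q(4s-1)>2sq>5$ for $s>1/2$.

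First I would prove the following $\eps$-regularity statement: there exists $\eps_0=\eps_0(s,q)>0$ such that if
\[
\Phi(x_0,r) := r^{6s-7}\int_{B_r(x_0)}|(-\Delta)^{s/2}u|^2\,dx + r^{6s-8}\int_{B_r(x_0)}(|u|^3+|p|^{3/2})\,dx + r^{\sigma}\|f\|_{L^q(B_r(x_0))}^2 < \eps_0
\]
for some $r>0$ and appropriate $\sigma=\sigma(s,q)>0$, then $u$ is H\"older continuous in a neighborhood of $x_0$. Granted this, a Vitali cover of $\mc{S}$ by balls on which $r^{6s-7}\int_{B_r}|(-\Delta)^{s/2}u|^2\geq \eps_0/2$ (the lower-order terms being absorbed by taking $r$ small using the global $L^3$, $L^{3/2}$, and $L^q$ bounds) gives $\mc{H}^{7-6s}(\mc{S})\les \eps_0^{-1}\int_{\mb{R}^5}|(-\Delta)^{s/2}u|^2\,dx$, and shrinking the cover yields $\mc{H}^{7-6s}(\mc{S})=0$. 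For $s\geq 7/6$ the exponent $6s-7\geq 0$ forces $\Phi(x_0,r)\to 0$ as $r\to 0$ at every $x_0$, so $\mc{S}=\emptyset$ and $u$ is H\"older continuous everywhere.

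The $\eps$-regularity is proved by contradiction. Assuming failure, extract sequences of suitable weak solutions $(u_k,p_k,f_k)$, points $x_k$, and radii $r_k$ with $\Phi(x_k,r_k)\to 0$ but $x_k$ singular. Rescale via $v_k(y)=r_k^{2s-1}u_k(x_k+r_ky)$ and the corresponding pressure and force; the rescaled data have $\Phi$ tending to zero on $B_1$ and rescaled force tending to zero in $L^q(B_1)$. Uniform $H^s$-boundedness together with Sobolev embedding $H^s(B_1)\hookrightarrow L^3$ (valid since $6s>5$) gives strong compactness of $v_k$ in $L^3_{\mathrm{loc}}$, and standard pressure and nonlocal-tail decompositions pass the equations to a limit $v_\infty$ solving the homogeneous steady hyperdissipative system on $\mb{R}^5$ with $v_\infty\equiv 0$ on $B_1$. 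Upgrading the convergence via interior regularity for $(-\Delta)^s$ to a uniform pointwise bound on $v_k$ near the origin then contradicts $x_k$ being singular.

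The principal obstacle is the nonlocality of $(-\Delta)^s$: the local energy inequality satisfied by suitable weak solutions carries a nonlocal tail that must be tracked carefully during both the localization to $B_r(x_0)$ and the blowup. This can be handled via the Caffarelli--Silvestre extension or by direct kernel estimates as in \cite{CDM}. Closely related is the difficulty of obtaining sharp local pressure decompositions when the viscosity is nonlocal, and of ensuring that tail contributions from $(-\Delta)^s v_k$ outside unit balls decay fast enough under rescaling so that $v_\infty$ genuinely solves the equation on all of $\mb{R}^5$ without ghost source terms.
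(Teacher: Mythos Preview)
Your reduction of Theorem~\ref{supermain} to an $\eps$-regularity criterion via a Vitali covering is correct and is essentially what the paper does in its Appendix (the paper phrases the critical density using the Caffarelli--Silvestre extension, $r^{6s-7}\int_{B_r^+}y^b|\onabla(\nabla u)^\flat|^2$, rather than $r^{6s-7}\int_{B_r}|(-\Delta)^{s/2}u|^2$; for the covering step either choice works since both integrands are globally $L^1$).

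The genuine gap is in your blowup proof of $\eps$-regularity. As you set it up, the rescaled solutions $v_k$ have $\Phi_k\to 0$ on $B_1$, so $v_k\to 0$ in $L^3(B_1)$ and the limit $v_\infty$ is identically zero. Convergence to the zero solution carries no information: the step ``upgrading the convergence via interior regularity for $(-\Delta)^s$ to a uniform pointwise bound on $v_k$'' is precisely the $\eps$-regularity you are trying to prove, so the argument is circular. The triviality of $v_\infty$ does not force the approximants $v_k$ to be bounded near the origin; for that you would need compactness in $L^\infty$, which is exactly what is at stake.

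The paper (following \cite{LS} and \cite{CDM}) resolves this by proving instead an \emph{excess decay} estimate. One defines an excess $E(u,p;r)$ by subtracting averages, assumes by contradiction that $E(u_k,p_k;r_k)+c_\gamma(f_k)r_k^\beta=\eps_k\to 0$ while $E(u_k,p_k;\theta r_k)\geq c_1\theta^{\alpha_1}\eps_k$, and then---this is the step you are missing---\emph{renormalizes} by $\eps_k$, setting $v_k(x)=(u_k(r_kx)-(u_k)_{r_k})/\eps_k$. Now the $v_k$ have unit-size excess and converge (compactness coming from the rescaled energy inequality of Lemma~\ref{3.2} together with Lemma~\ref{3.4} to control the nonlocal tail) to a nontrivial $(v,q)$ solving the \emph{linear} drift equation $(-\Delta)^s v+M_0\cdot\nabla v+\nabla q=0$, where $M_0=\lim r_k^{2s-1}(u_k)_{r_k}$ arises from the subtracted mean and must be assumed bounded a priori. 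Linear regularity (Lemma~\ref{5.1}) gives $E(v,q;\theta)\leq C\theta$, and strong convergence transfers this to $v_k$ for large $k$, contradicting the assumed failure of decay once $c_1$ is chosen large. Iteration of the decay plus the Campanato criterion then yields H\"older continuity. Without the renormalization and the passage to the linear limit equation, your compactness argument cannot close.
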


Theorem \ref{supermain} follows by a standard covering argument (included in the Appendix, see also \cite[Section 6]{CKN}) from the following $\varepsilon$-regularity theorem:

\begin{Thm}\label{epsmain}
    There exists $\varepsilon>0$ such that if $(u,p)$ is a suitable weak solution in $\mb{R}^5$ of \eqref{NS5} as in Definition \ref{suitable} with $s\in(1,2)$ and force $f\in L^q(\mb{R}^5)$ for some $q>\frac{5}{2s}$ , and
    \begin{align}
        \limsup_{r\rightarrow 0} r^{-7+6s}\int_{B_r^+(x)}y^b|\onabla(\nabla u)^\flat|^2\ dx\ dy<\varepsilon,
    \end{align}
    then $u$ is regular at $x\in\mb{R}^5$.
\end{Thm}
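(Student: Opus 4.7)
The plan is to combine a Ladyzhenskaya--Seregin-style blowup/compactness argument with the Caffarelli--Silvestre extension framework used by Colombo--De Lellis--Massaccesi, and then iterate to obtain a Morrey-type decay that forces H\"{o}lder continuity. Set $E(x,r) := r^{-7+6s} \int_{B_r^+(x)} y^b\, |\overline{\nabla}(\nabla u)^\flat|^2\, dz\, dy$ (scale-invariant under $u\mapsto \lambda^{2s-1}u(\lambda\cdot)$, with $b=3-2s$). The key step I would prove is a one-step excess decay lemma: there exist $\varepsilon_0>0$ and $\theta\in(0,1)$ such that if $(u,p)$ is a suitable weak solution on $B_1$ with $E(0,1)\le \varepsilon_0$ and the standard a priori scale-invariant bounds, then $E(0,\theta)\le \tfrac12 E(0,1)$. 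Iterating this with the hypothesis $\limsup_{r\to 0}E(x,r)<\varepsilon$ yields Campanato-type decay of $\nabla u$ through the Caffarelli--Silvestre trace characterization, whence H\"{o}lder continuity of $u$ at $x$, followed by standard bootstrap using fractional elliptic regularity for the full regularity claim when $s\ge 7/6$.

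\textbf{Blowup and limit equation.} The decay lemma is proved by contradiction. If it fails, one produces a sequence of suitable weak solutions with $\varepsilon_k:=E(0,1)[u^{(k)}]\to 0$ yet $E(0,\theta)[u^{(k)}]>\tfrac12\varepsilon_k$. Decompose $u^{(k)} = m_k + \varepsilon_k^{1/2} v^{(k)}$, where $m_k$ is the mean of $u^{(k)}$ on $B_1$, so that the fluctuations $v^{(k)}$ have unit weighted extension energy. Using the local energy inequality and the pressure estimate to secure uniform control of the scale-invariant quantities, and invoking weighted Sobolev and Rellich-type compactness in the half-space $\{y>0\}$ with weight $y^b$, I would extract a subsequence $v^{(k)}\rightharpoonup \bar v$ with strong convergence in $L^3_{\mathrm{loc}}$. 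Since the quadratic nonlinearity picks up a factor $\varepsilon_k^{1/2}\to 0$ after subtracting the constant part and the rescaled force $f^{(k)}$ tends to zero in $L^q$ (as $q>5/(2s)$ implies the scaling exponent $4s-1-5/q$ is strictly positive), the limit $\bar v$ satisfies a linear constant-coefficient stationary fractional Stokes-type system.

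\textbf{Regularity of the limit and iteration.} The limiting linear system enjoys interior smoothing; in particular, Cacciopoli estimates in the weighted half-space together with smooth regularity theory give a decay of the excess of the form $E(0,\theta)[\bar v]\le C\theta^{2\alpha} E(0,1)[\bar v]$ for some $\alpha>0$. Choosing $\theta$ so that $C\theta^{2\alpha}<\tfrac14$ and using strong convergence of the extensions to pass to the limit contradicts the non-decay assumption $E(0,\theta)[u^{(k)}]>\tfrac12\varepsilon_k$, proving the decay lemma. Iterating then gives $E(x,\theta^n r_0)\lesssim 2^{-n}$ at $x$, and combining this with the standard Morrey-Campanato embedding (transferred from the weighted extension via the $H^{s-1}$ trace theorem) yields H\"{o}lder regularity of $\nabla u$ near $x$, which in particular implies $u$ is regular at $x$; when $s\ge 7/6$ the starting smallness is automatic on compact sets so regularity is everywhere.

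\textbf{Main obstacle.} The principal difficulty is the compactness/passage to the limit in Step 2. Strong convergence in $L^3_{\mathrm{loc}}$ is needed to handle the nonlinearity, and this must be obtained jointly with sufficient weak convergence of the extensions in weighted Sobolev spaces with weight $y^b=y^{3-2s}$, where the Muckenhoupt $A_2$ theory guarantees the usual Rellich compactness but must be applied carefully to fractional traces. The pressure is further awkward: it is determined nonlocally from $u\otimes u$ and $f$, and one has to decompose it (e.g.\ harmonic part controlled by boundary data plus a Calder\'{o}n--Zygmund piece) so that its rescaled counterpart remains controlled uniformly in $k$ and passes to the limit. A secondary technical issue is verifying that the local energy inequality for \eqref{NS5} (in its extension-variable formulation from Definition \ref{suitable}) scales correctly under $u\mapsto \lambda^{2s-1}u(\lambda\cdot)$ so that all scale-invariant quantities remain bounded throughout the blowup.
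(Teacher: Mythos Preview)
Your overall architecture---blowup, linear limit, decay, iteration---matches the paper's in spirit, but you apply it to the wrong quantity, and this creates a genuine gap. The paper does \emph{not} run the blowup on $\mc{E}(r)=r^{-7+6s}\int_{B_r^+}y^b|\onabla(\nabla u)^\flat|^2$. Instead it proves a separate $\varepsilon$-regularity criterion (Theorem~\ref{main}) whose hypothesis is smallness of a \emph{lower-order} excess $\ol{E}=\ol{E^V}+\ol{E^P}+\ol{E^{nl}}$ built from $\|u\|_{L^3}$, $\|p\|_{L^{3/2}}$, and a nonlocal tail; the blowup/compactness argument is carried out on \emph{that} excess. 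Theorem~\ref{epsmain} is then deduced from Theorem~\ref{main} by purely elementary comparison lemmas (Lemmas~\ref{BB}--\ref{D}) showing $\limsup\mc{E}(r)$ small forces $\limsup(\mc{T}+\mc{C}+\mc{D})(r)$ small, i.e.\ $r^{2s-1}\ol{E}$ small. No blowup, no compactness is used at this second stage.

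The reason this two-step route is taken is precisely the obstacle you underestimate. To contradict $E(0,\theta)[v^{(k)}]>\tfrac12$ you need $E(0,\theta)[v^{(k)}]\to E(0,\theta)[\bar v]$ (or at least upper semicontinuity), and that demands \emph{strong} convergence of $\onabla(\nabla v^{(k)})^\flat$ in $L^2(B_\theta^+,y^b)$. But this is exactly the top-order quantity whose boundedness is all you have; Rellich in the weighted half-space gives you strong convergence of $(\nabla v^{(k)})^\flat$, not of its gradient, and strong $L^3_{\mathrm{loc}}$ convergence of $v^{(k)}$ on the boundary does not upgrade this. Weak convergence only yields $E(0,\theta)[\bar v]\le\liminf E(0,\theta)[v^{(k)}]$, which goes the wrong way. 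A second, related issue is nonlocality: $(\nabla v^{(k)})^\flat$ on $B_\theta^+$ depends on $\nabla v^{(k)}$ on all of $\mb{R}^5$, so local bounds on $B_1$ do not by themselves control the extension; the paper handles this through the explicit tail term $E^{nl}$ carried along in the excess, which your scheme lacks. By contrast, when the blowup is run on $E^V,E^P$ (plus $E^{nl}$), the normalized sequence is bounded in $H^s_{\mathrm{loc}}$ via the energy inequality, Rellich gives strong $L^3$ and $L^{3/2}$ convergence one order below, and the contradiction closes cleanly.
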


Here $(\nabla u)^\flat\in H^1(\mb{R}^{n+1}_+,y^b)$ is an extension of $\nabla u$ on $\mb{R}^n$ to the upper half-plane $\mb{R}^{n+1}_+$, as will be clarified in the next section, and $B_r^+\subseteq \mb{R}^{n+1}_+$.

The structure of the paper is as follows: In Section \ref{prelim} we state the necessary extension theorems for the fractional Laplacian in order to define suitable weak solutions of \eqref{NS5} and some preliminary estimates involving extensions, in Section \ref{energysection} we derive an energy inequality for rescaled steady solutions of \eqref{NS5}, in Section \ref{decaysection} we prove decay estimates and an initial $\eps$-regularity result, Theorem \ref{main}, and in Section \ref{finalsection} we prove as a consequence our main $\eps$-regularity theorem, Theorem \ref{epsmain}, which implies the Hausdorff measure estimate of Theorem \ref{supermain} (details in the Appendix).

Regarding notation---we distinguish between derivatives in $\mb{R}^{n+1}_+$ and derivatives on the boundary $\mb{R}^n$ by placing a line over differential operators on $\mb{R}^{n+1}$. So for instance $\oDelta=\Delta+\partial_y^2=\sum_{i=1}^n\partial_{x_i}^2+\partial_y^2$. We also have $\oDelta_b U=\oDelta U+\frac{b}{y}\partial_y U=y^{-b}\text{Div }(y^b\onabla U)$ for functions $U$ on $\mb{R}^{n+1}_+$, following the standard notation for extension theorems as in \cite{CS,RY}. In many settings below it suffices to consider balls centered at the origin, in which case $B_r$ is understood to denote $B_r(0)$. Also we denote by $B_r^+(x)=B_r(x)\times[0,r)\subseteq\mb{R}^n\times\mb{R}_+$ and then have the similar convention $B_r^+=B_r\times[0,r)$.  When we refer to a cutoff function between open sets $\Omega_1$ and $\Omega$ for $\Omega_1\subseteq\Omega$ we mean a smooth nonnegative function which is identically $1$ on $\Omega_1$ and $0$ on $\Omega^c$. Further notation and conventions will be introduced below as needed. The dimension $n=5$ becomes important in Section \ref{finalsection} for compactness arguments needed to obtain the decay estimates, but until then we will mainly work without specifying the dimension $n$, though we will not track the dependence of constants on $n$. Finally, unless otherwise specified our discussion of \eqref{NS5} is restricted to $s\in(1,2)$.

To conclude this section we will give a discussion of our arguments and compare them with related arguments in previous papers. 

First for the non-steady equation \eqref{NS5time} when $s>1$, a major difficulty in studying partial regularity is that the natural generalization of the argument for $s< 1$ found in \cite{TY} fails. That argument depends on sequences of test functions constructed from the fractional heat kernel; however when $s>1$ the fractional heat kernel takes negative values and cannot be directly used to construct a sequence of test functions. The blow-up arguments of \cite{L} and \cite{LS} in the case $s=1$ avoid the direct use of the heat kernel, and \cite{CDM} also successfully developed a blow-up approach to extend the partial regularity theory for \eqref{NS5time} to $s>1$. The method of \cite{LS} is fairly general and has also been used recently to prove partial regularity for models closely related to the classical Navier--Stokes, again avoiding difficulties in constructing special test functions \cite{OR}.

In seeking to extend the partial regularity theory for the steady equation \eqref{NS5} to $s>1$ we encounter a similar problem; the test function arguments of \cite{TY2} and \cite{GM} for $s<1$ fail when $s>1$. Instead we will use extensively the methods developed by \cite{CDM} to study the non-steady equation \eqref{NS5time}. However, the difference is that their argument takes the external force $f$ to be zero, while we allow for suitable nonzero external forces $f$. In order to account for this we must appropriately adapt the blowup procedure of \cite{LS} to the fractional setting. We remark that the time-independence of \eqref{NS5} results in some of the arguments derived from \cite{CDM} in Section \ref{finalsection} used to obtain Theorem \ref{epsmain} as a consequence of Theorem \ref{main} becoming simpler with the help of estimates from \cite{TY2}. The following table illustrates the connection between Theorem \ref{supermain} and some previous results, in each case listed with the best known assumptions on $f$:
\begin{center}
  {\renewcommand{\arraystretch}{1.3}\begin{tabular}{ | l | l | l | l | l |}
    \hline
    $(-\Delta)^s$ power & \multicolumn{2}{l|}{\eqref{NS5time}, Non-steady $n=3$: $\mc{H}^{5-4s}(\mc{S})=0$} & \multicolumn{2}{l |}{\eqref{NS5}, Steady $n=5$: $\mc{H}^{7-6s}(\mc{S})=0$} \\ \hline
    $s<1$ & \cite{TY} & $f\in L^q_{t,x}$, $q>\frac{9+6s}{4s+1}$ & \cite{TY2}, \cite{GM} &  $f\in L^q_x$, $q>\frac{5}{2s}$\\ \hline
    $s=1$ & \cite{CKN},\cite{LS},\cite{Ku} & $f\in L^2_{t,x}$, $q\geq 2$ & \cite{Struwe} & $f\in L^q_x$, $q>\frac{5}{2}$\\ \hline
    $s>1$ & \cite{CDM} & $f=0$ & Theorem \ref{supermain} & $f\in L^q_x$, $q>\frac{5}{2s}$\\ \hline
  \end{tabular}}
\end{center}

Note that although \cite{Ku} allows for $f\in L^2_{t,x}$ for \eqref{NS5time} when $s=1$ and $n=3$, the earlier arguments in \cite{CKN} and \cite{LS} required $f\in L^q_{t,x}$ for $q>\frac{5}{2}$, which is exactly the range of $q$ for which $f\in L^q_x$ required by \cite{Struwe} for the steady equation \eqref{NS5} when $s=1$ and $n=5$.

\section{Suitable weak solutions and preliminary estimates}\label{prelim}

In this section we introduce the notion of suitable weak solutions of the steady equation \eqref{NS5} in Definition \ref{suitable}. In order to do this we must first describe the extension theorems for the fractional Laplacian of Caffarelli--Silvestre \cite{CS} and R. Yang \cite{RY}.  To conclude we quote some general integral estimates of these extensions from \cite{CDM} adapted to the $n$-dimensional setting which will be useful later.

\subsection{Extension theorems for $(-\Delta)^s$}

\begin{theorem}[Caffarelli--Silvestre \cite{CS}]\label{CaffS}
    Let $w\in \dot{H}^{\tilde{s}}(\mb{R}^n)$ with $\tilde{s}\in(0,1)$ and set $\tilde{a}=1-2\tilde{s}$. Then there is a unique extension $w^\flat\in \dot{H}^1(\mb{R}^{n+1}_+,y^{\tilde{a}})$ which satisfies
    \begin{align}
        \oDelta_{\tilde{a}} w^\flat(x,y)=0\quad\text{and the boundary condition}\quad w^\flat(x,0)=w(x),\notag
    \end{align}
This extension can be characterized by
\begin{align}
	w^\flat(x,y)=\int_{\mb{R}^n}Q(x-\xi,y)w(\xi)\ d\xi, \quad\text{or}\quad\widehat{w^\flat}(\xi,y)=\hat{w}(\xi)\psi(|\xi|y),\notag
\end{align}
where $Q(x,y)=\ol{C}_{n,\tilde{s}}\frac{y^{2\tilde{s}}}{(|x|^2+y^2)^{\frac{n+2\tilde{s}}{2}}}$ and $\psi\in W^{1,2}(\mb{R}_+,y^{\tilde{a}})$ minimizes a variational problem (see \cite{CS}). Moreover there exists a constant $C_{n,\tilde{s}}$ depending on $n$ and $\tilde{s}$ such that:
    \begin{enumerate}[(a)]
        \item The fractional Laplacian $(-\Delta)^{\tilde{s}}$ is given by $(-\Delta)^{\tilde{s}}w(x)=-C_{n,\tilde{s}}\lim_{y\rightarrow 0}y^{\tilde{a}}\partial_y w^\flat(x,y)$.
        \item The following energy identity holds:
        \begin{align} 
            \int_{\mb{R}^n}|(-\Delta)^{\frac{\tilde{s}}{2}} w|^2\ dx=\int_{\mb{R}^n}|\xi|^{2\tilde{s}-2}|\hat{w}(\xi)|^2\ d\xi=C_{n,\tilde{s}}\int_{\mb{R}^{n+1}_+}y^{\tilde{a}}|\onabla w^\flat|^2\ dx\ dy.\notag
        \end{align}
        \item The following inequality holds for every extension $\tilde{v}\in H^1(\mb{R}^{n+1}_+,y^{\tilde{a}})$ of $w$:
        \begin{align}
            \int_{\mb{R}^{n+1}_+}y^{\tilde{a}}|\onabla w^\flat|^2\ dx\ dy\leq\int_{\mb{R}^{n+1}_+}y^{\tilde{a}}|\onabla \tilde{v}|^2\ dx\ dy.\notag
        \end{align}
    \end{enumerate}
\end{theorem}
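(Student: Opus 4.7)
The plan is to derive the extension by Fourier analysis in the horizontal variables, then read off properties (a)--(c) from the resulting representation. Take the Fourier transform of $\oDelta_{\tilde{a}} U = 0$ in the $x$ variables; the equation becomes, for each fixed $\xi$, the ODE
\begin{equation*}
\partial_y^2 \hat{U}(\xi,y) + \tfrac{\tilde{a}}{y} \partial_y \hat{U}(\xi,y) - |\xi|^2 \hat{U}(\xi,y) = 0, \qquad \hat{U}(\xi,0) = \hat{w}(\xi).
\end{equation*}
Scaling suggests the ansatz $\hat{U}(\xi,y) = \hat{w}(\xi)\,\psi(|\xi|y)$, which reduces matters to a single ODE
$\psi''(t) + (\tilde{a}/t)\psi'(t) - \psi(t) = 0$ with $\psi(0)=1$ and the requirement that $\psi$ decay as $t\to\infty$ (forced by finite weighted energy). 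This is a modified Bessel equation; the unique solution in the relevant function class is a multiple of $t^{\tilde{s}} K_{\tilde{s}}(t)$, and inverting the Fourier transform with standard Bessel-function identities produces the Poisson-type kernel $Q(x,y)$ with the claimed homogeneity. Uniqueness of $w^\flat$ in $\dot{H}^1(\mb{R}^{n+1}_+, y^{\tilde{a}})$ follows because a second extension would yield a harmonic (in the $\oDelta_{\tilde{a}}$ sense) function vanishing on the boundary with finite weighted energy, forcing it to vanish by the minimization property (c), proved below.

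For part (a), I would expand $\psi$ near $t=0$. The indicial roots of the ODE are $0$ and $1-\tilde{a} = 2\tilde{s}$, so
\begin{equation*}
\psi(t) = 1 + c_{\tilde{s}}\,t^{2\tilde{s}} + O(t^{\min(2,2+2\tilde{s})}),
\end{equation*}
with $c_{\tilde{s}}$ an explicit constant. Differentiating and inserting into $y^{\tilde{a}}\partial_y \hat{U}(\xi,y) = \hat{w}(\xi)\,|\xi|^{1+\tilde{a}}\, y^{\tilde{a}}\,\psi'(|\xi|y)$ and letting $y \to 0$ yields $2\tilde{s} c_{\tilde{s}} |\xi|^{2\tilde{s}} \hat{w}(\xi)$; identifying the Fourier multiplier and choosing $C_{n,\tilde{s}}$ so that $-C_{n,\tilde{s}} \cdot 2\tilde{s} c_{\tilde{s}} = 1$ gives (a). Part (b) then follows via Plancherel and a one-dimensional weighted integral: by the ODE,
\begin{equation*}
\int_0^\infty y^{\tilde{a}}\bigl(|\xi|^2|\psi(|\xi|y)|^2 + |\partial_y \psi(|\xi|y)|^2\bigr)\,dy = |\xi|^{2\tilde{s}-1}\int_0^\infty t^{\tilde{a}}(|\psi|^2 + |\psi'|^2)\,dt,
\end{equation*}
and the constant can be evaluated by integrating by parts against the ODE, giving agreement (after fixing $C_{n,\tilde{s}}$) with $\int |\xi|^{2\tilde{s}}|\hat{w}|^2\,d\xi$.

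For part (c), given any competitor $\tilde{v}\in H^1(\mb{R}^{n+1}_+, y^{\tilde{a}})$ with $\tilde{v}(\cdot,0)=w$, write $h = \tilde{v}-w^\flat$, which lies in the weighted $\dot{H}^1$ with vanishing trace. Expanding
\begin{equation*}
\int_{\mb{R}^{n+1}_+} y^{\tilde{a}}|\onabla \tilde{v}|^2\, dx\, dy = \int y^{\tilde{a}}|\onabla w^\flat|^2 + 2\int y^{\tilde{a}} \onabla w^\flat \cdot \onabla h + \int y^{\tilde{a}}|\onabla h|^2,
\end{equation*}
the cross term is $-\int h\,\mathrm{Div}(y^{\tilde{a}}\onabla w^\flat) + (\text{boundary})$. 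The interior piece vanishes because $\oDelta_{\tilde{a}} w^\flat = 0$, and the boundary term at $y=0$ vanishes because $h(\cdot,0)=0$ (while decay at infinity kills the remaining boundary pieces thanks to the Dirichlet energy being finite). Since $\int y^{\tilde{a}}|\onabla h|^2\geq 0$, inequality (c) follows.

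The main obstacle I anticipate is not conceptual but computational: tracking the exact normalization constants. Both identifying $c_{\tilde{s}}$ from the Bessel-function asymptotics and showing that the same constant $C_{n,\tilde{s}}$ simultaneously normalizes (a) and (b) requires careful bookkeeping, as does justifying the pointwise limit in (a) for $w$ merely in $\dot{H}^{\tilde{s}}$ rather than Schwartz (typically addressed by density and interpreting the limit distributionally).
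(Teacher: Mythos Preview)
The paper does not give its own proof of this theorem; it is quoted from \cite{CS} as a background result and used without argument. So there is nothing in the paper to compare your proposal against.

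That said, your outline is essentially the standard Caffarelli--Silvestre derivation: Fourier transform in $x$, reduce to the Bessel-type ODE in $y$, identify the decaying solution $\psi$, read off the Dirichlet-to-Neumann map from the indicial expansion, and obtain the minimality by expanding the Dirichlet form around the harmonic extension. This is the approach of the cited reference, so in that sense your proposal matches the intended source. One small slip: in your scaling computation for (b) the exponent should be $|\xi|^{1-\tilde a}=|\xi|^{2\tilde s}$ rather than $|\xi|^{2\tilde s-1}$, which is exactly what is needed to match $\int_{\mb{R}^n}|\xi|^{2\tilde s}|\hat w(\xi)|^2\,d\xi$. Your closing caveat about constants and the distributional interpretation of the limit in (a) is accurate; those are the only places where care is genuinely required.
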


\begin{theorem}[R. Yang \cite{RY}, see also \cite{CG,CRY,CDM}]\label{Rays}
    Let $u\in \dot{H}^s(\mb{R}^n)$ with $s\in(1,2)$ and set $b=3-2s$. Then there is a unique extension $u^*$ of $u$ in $L^2_{\text{loc}}(\mb{R}^{n+1}_+,y^b)$ with $\oDelta_b u^*\in L^2(\mb{R}^{n+1}_+,y^b)$ which satisfies
    \begin{align}
        \oDelta_b^2 u^*(x,y)=0,\quad\text{and the boundary conditions}\quad
	\begin{cases}
        u^*(x,0)=u(x),\notag
        \\
        \lim_{y\rightarrow 0} y^{1-s}\partial_y u^*(x,y)=0.\notag
	\end{cases}
    \end{align}
	This extension can be characterized by
	\begin{align}
	u^*(x,y)=\int_{\mb{R}^n}P(x-\xi,y)u(\xi)\ d\xi, \quad\text{or}\quad\widehat{u^*}(\xi,y)=\hat{u}(\xi)\phi(|\xi|y),\notag
	\end{align}
	where $P(x,y)=\ol{c}_{n,s}\frac{y^{2s}}{(|x|^2+y^2)^{\frac{n+2s}{2}}}$ and $\phi\in W^{2,2}(\mb{R}_+,y^b)$ minimizes a variational problem (see \cite{RY}).
    Moreover there exists a constant $c_{n,s}$ depending on $n$ and $s$ with the following properties:
    \begin{enumerate}[(a)]
        \item The fractional Laplacian $(-\Delta)^s u$ is given by $(-\Delta)^s u(x)=c_{n,s}\lim_{y\rightarrow 0} y^b\partial_y\oDelta_b u^*(x,y)$.
        \item The following energy identity holds:
        \begin{align}
            \int_{\mb{R}^n}|(-\Delta)^{\frac{s}{2}}u|^2\ dx=\int_{\mb{R}^n}|\xi|^{2s}|\hat{u}(\xi)|^2\ d\xi=c_{n,s}\int_{\mb{R}^{n+1}_+}y^b|\oDelta_b u^*|^2\ dx\ dy.\notag
        \end{align}
        \item The following inequality holds for every extension $v\in L^2_{\text{loc}}(\mb{R}^{n+1}_+,y^b)$ of $U$ with $\oDelta_b v\in L^2(\mb{R}^{n+1}_+,y^b)$:
        \begin{align}
            \int_{\mb{R}^{n+1}_+}y^b|\oDelta_b u^*|^2\ dx\ dy\leq\int_{\mb{R}^{n+1}_+}y^b|\oDelta_b v|^2\ dx\ dy.\notag
        \end{align}
    \end{enumerate}
\end{theorem}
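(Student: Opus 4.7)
The plan is to reduce the higher-order extension problem to a one-variable problem via Fourier transform in the tangential $x$ variables, exactly as for the Caffarelli--Silvestre case but at order four instead of two. For $u \in \dot H^s(\mathbb R^n)$ I would look for an extension of the self-similar form $\widehat{u^*}(\xi, y) = \hat u(\xi)\, \phi(|\xi| y)$. Writing $t = |\xi| y$, the equation $\oDelta_b^2 u^* = 0$ on $\mathbb R^{n+1}_+$ becomes
\begin{equation*}
\bigl( \mathcal L_b - 1 \bigr)^2 \phi(t) = 0, \qquad \mathcal L_b = \partial_t^2 + \tfrac{b}{t}\partial_t,
\end{equation*}
a fourth-order Bessel-type ODE. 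The natural boundary conditions are $\phi(0) = 1$ (so that $u^*(\cdot,0) = u$), $\lim_{t \to 0^+} t^{1-s}\phi'(t) = 0$ (the second boundary condition in the theorem, after noting $1-s = -(b-1)/2$ shifts to the weighted derivative), together with decay $\phi(t) \to 0$ as $t \to \infty$. Together these pin down a unique $\phi$ among the four-dimensional solution space of the ODE, and since the ODE and boundary conditions only see $|\xi| y$, this $\phi$ is universal (independent of $\xi$).

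With $\phi$ in hand, existence is immediate. The inverse Fourier transform of $\hat u(\xi)\phi(|\xi| y)$ is a convolution with a kernel $P(x, y)$; scaling $t = |\xi| y$ shows that $P(x,y)$ is a homogeneous function of $(x,y)$ of the correct degree, and a direct computation identifies it up to a constant $\bar c_{n,s}$ as $y^{2s}/(|x|^2 + y^2)^{(n+2s)/2}$. The recovery formula (a) for $(-\Delta)^s u$ comes from computing $\lim_{y \to 0} y^b \partial_y \oDelta_b u^*$ in Fourier variables: the resulting multiplier is a constant $c_{n,s}^{-1}$ times $|\xi|^{2s}$, thanks to a short calculation in $t$ using the ODE and the leading asymptotics of $\phi$ near zero. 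The energy identity (b) is then Plancherel: compute $\int_{\mathbb R^{n+1}_+} y^b |\oDelta_b u^*|^2\, dx\, dy$ in Fourier variables as $\int |\hat u(\xi)|^2 |\xi|^{2s}\, d\xi$ times a definite integral in $t$ of $|( \mathcal L_b - 1)\phi|^2 \cdot t^b\, dt$ (after the change of variables $t = |\xi|y$), and the latter is a pure constant that defines $c_{n,s}$.

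For the uniqueness and minimization property (c), I would argue variationally. Any competitor $v \in L^2_{\text{loc}}(\mathbb R^{n+1}_+, y^b)$ with $\oDelta_b v \in L^2(\mathbb R^{n+1}_+, y^b)$ and $v(\cdot, 0) = u$, $\lim_{y\to 0} y^{1-s}\partial_y v = 0$ can be written as $v = u^* + w$, where $w$ has zero trace and zero weighted normal derivative at $y = 0$. Expanding $\|\oDelta_b v\|^2_{L^2(y^b)} = \|\oDelta_b u^*\|^2 + 2\langle \oDelta_b u^*, \oDelta_b w\rangle + \|\oDelta_b w\|^2$, the cross term should vanish after integration by parts twice, since $\oDelta_b^2 u^* = 0$ in the interior and the boundary contributions are killed by the vanishing traces of $w$ and its weighted derivative; the degenerate weight $y^b$ needs a careful justification via approximation, cutting off near $y = 0$ and passing to the limit using a Hardy-type inequality adapted to the weight. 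The minimization inequality then follows from $\|\oDelta_b w\|^2 \geq 0$, and taking $v$ to be another admissible extension forces $w = 0$, giving uniqueness. The step I expect to be most delicate is the rigorous integration by parts at $y = 0$ in the degenerate weighted setting, since both $y^b$ vanishes/blows up and the admissibility class only asks for $L^2(y^b)$ regularity of $\oDelta_b u^*$ rather than of $\onabla u^*$ itself; this is where the ODE computation for $\phi$ near zero and the specific form of the boundary conditions must be combined to justify taking traces.
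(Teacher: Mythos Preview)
The paper does not prove this theorem; it is quoted as a known result of R.~Yang \cite{RY} (with related references \cite{CG,CRY,CDM}), and the statement itself directs the reader to \cite{RY} for the variational characterization of $\phi$. There is therefore no proof in the paper to compare your proposal against. That said, your sketch follows exactly the approach taken in the cited literature: Fourier transform in the tangential variables to reduce $\oDelta_b^2 u^*=0$ to a fourth-order Bessel-type ODE in the scaled variable $t=|\xi|y$, selection of the unique solution $\phi$ by the two boundary conditions at $t=0$ together with decay at infinity, identification of the Poisson kernel by scaling, computation of (a) and (b) via Plancherel and a one-variable integral in $t$, and the minimization property (c) by expanding $\|\oDelta_b(u^*+w)\|^2$ and integrating by parts to kill the cross term. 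Your identification of the delicate step---justifying the boundary integration by parts at $y=0$ in the degenerate weighted space---is accurate; in the references this is handled either by a density/cutoff argument or by working directly with the explicit asymptotics of $\phi$ near $t=0$.
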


\begin{Rem}
	We will apply Theorem \ref{Rays} in Sections \ref{prelim}--\ref{decaysection} to $u\in\dot{H}^s$ in order to prove Theorem \ref{main}, and apply Theorem \ref{CaffS} to $\nabla u\in \dot{H}^{s-1}$ in order to prove Theorem \ref{epsmain} as a consequence of Theorem \ref{main} in Section \ref{finalsection}. So for our applications $\tilde{s}=s-1$ so that $\tilde{a}=3-2s=b$, and therefore throughout this paper we have only the weight $y^b$ on $\mb{R}^{n+1}_+$ regardless of the extension theorem used.
\end{Rem}

\subsection{Suitable weak solutions}

Now we will define the suitable weak solutions of \eqref{NS5}.  Before stating the definition we introduce the function space to which we assume the external force $f$ belongs. For $\gamma>0$, we define
\begin{align}
    M_{2s,\gamma}(\mb{R}^n)=\{f\in L^2_{\text{loc}}(\mb{R}^n,\mb{R}^n):\ c_\gamma(f)<\infty\},\notag
\end{align}
where $c_\gamma(f)=\sup\left\{\frac{1}{R^{\gamma-2s}}\left(\textmean{B_R(x)}|f|^2\ dx\right)^{\frac{1}{2}}:\ B_R(x)\subseteq\mb{R}^n\right\}$.

Observe that if $f\in L^q(\mb{R}^5)$ for a $q\geq 2$, then $\int_{B_R(x)}|f|^q\ dx<\infty$ for all $B_R(x)\subseteq\mb{R}^n$, so that
\begin{align}
    R^{-\gamma+2s}\left(\mean{B_R(x)}|f|^2\ dx\right)^{1/2}&\leq R^{-\gamma+2s}\left(\mean{B_R(x)} |f|^q\ dx\right)^{1/q}=R^{-\gamma+2s-n/q}\left(\int_{B_R(x)} |f|^q\ dx\right)^{1/q},\notag
\end{align}
which is bounded if $-\gamma+2s-\frac{n}{q}=0$. Thus if the force $f$ belongs to $L^q(\mb{R}^n)$ for some $q>\frac{n}{2s}$ then $f\in M_{2s,\gamma}$ for some $\gamma>0$. We will prove our $\eps$-regularity results for $f\in M_{2s,\gamma}$ which will imply the weaker hypotheses on $f$ in Theorems \ref{supermain} and \ref{epsmain}.

\begin{Def}[Suitable weak solutions of \eqref{NS5}]\label{suitable}
    We call a pair $(u,p)$ with $u\in \dot{H}^s(\mb{R}^5)$ and $p\in L^{\frac{5}{5-2s}}(\mb{R}^5)$ a suitable weak solution of \eqref{NS5} with force $f$ if it solves \eqref{NS5} in the weak sense and also satisfies the following energy inequality:
    \begin{align}
        c_{n,s}\int_{\mb{R}^{n+1}_+}y^b|\ol{\Delta}_b u^*|^2\Phi\ dx\ dy\leq&\int_{\mb{R}^n}\left(\frac{|u|^2}{2}+p\right)u\cdot\nabla\varphi\ dx\label{ssen}
        \\
        &\quad-c_{n,s}\int_{\mb{R}^{n+1}_+}y^b\ol{\Delta}_b u_i^*(2\ol{\nabla} u_i^*\cdot\ol{\nabla}\Phi+u_i^*\ol{\Delta}_b\Phi)\ dx\ dy+\int_{\mb{R}^n} f\cdot u\varphi\ dx.\notag
    \end{align}
    Here $\Phi\in C_0^\infty(\mb{R}^{n+1}_+)$ with $\partial_y\Phi(\cdot,0)=0$ in $\mb{R}^n$, and $\Phi(x,0)=\varphi(x)$.
\end{Def}

If $(u,p)$ is sufficiently smooth then \eqref{ssen} follows from integration by parts using the extension properties of Theorem \ref{Rays}. The existence of suitable weak solutions of \eqref{NS5} when $s<1$ and $n=3$ was shown in \cite[Proposition 5.3]{TY2} by generalizing the proof using the Galerkin method for $s=1$ of \cite[Section 2.1]{Temam}. The same method extends to our setting when $s>1$ (in fact the case $s<1$ presents more difficulties for the construction) and we quote it below.

\begin{theorem}[Existence of suitable weak solutions {{\cite[Proposition 5.3]{TY2}}}]
	Given an external force $f\in L^{\frac{10}{5+2s}}(\mb{R}^5)$, there exists a suitable weak solution $(u,p)$ of \eqref{NS5}.
\end{theorem}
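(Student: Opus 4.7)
The plan is to follow the Galerkin construction of \cite[Proposition 5.3]{TY2}, with the adaptation that hyperdissipation ($s>1$) actually simplifies matters: the natural a priori estimate becomes $\|u\|_{\dot H^s(\mb{R}^5)}^2 \lesssim \|f\|_{L^{10/(5+2s)}} \|u\|_{\dot H^s}$, using the Sobolev embedding $\dot H^s(\mb{R}^5)\hookrightarrow L^{10/(5-2s)}$ which is valid for all $s\in(1,5/2)$. First I would fix a countable family of smooth, compactly-supported, divergence-free vector fields $\{w_j\}_{j\geq 1}$ whose span is dense in the divergence-free subspace of $\dot H^s(\mb{R}^5)$ (obtained for instance via eigenfunctions of the Stokes-type operator $(-\Delta)^s$ with Dirichlet data on an exhausting sequence of balls, or via truncated Littlewood--Paley projections). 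Setting $u_N=\sum_{j=1}^N c_j^N w_j$ and requiring that the projection of \eqref{NS5} onto $\text{span}(w_1,\ldots,w_N)$ vanish gives a finite-dimensional nonlinear system in $(c_1^N,\ldots,c_N^N)$. Existence of a solution follows from Brouwer's fixed-point theorem applied to the obvious continuous map, combined with the a priori bound obtained by testing the equation against $u_N$ itself:
\begin{equation}
\int_{\mb{R}^5}|(-\Delta)^{s/2}u_N|^2\,dx = \int_{\mb{R}^5} f\cdot u_N\,dx,\notag
\end{equation}
where the transport term drops by $\text{div }u_N=0$. Sobolev embedding and Young's inequality yield $\|u_N\|_{\dot H^s}\lesssim \|f\|_{L^{10/(5+2s)}}$ uniformly in $N$.

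Next I would pass to the limit. The uniform bound gives, along a subsequence, $u_N\rightharpoonup u$ in $\dot H^s(\mb{R}^5)$ and, by Rellich--Kondrachov compactness on any ball, $u_N\to u$ strongly in $L^p_{\rm loc}$ for every $p<10/(5-2s)$; in particular the quadratic term $u_N\otimes u_N$ converges to $u\otimes u$ in $L^{q}_{\rm loc}$ for some $q>1$, which is enough to pass the nonlinearity to the limit in distributions. The pressure is then recovered by taking the divergence of \eqref{NS5}: incompressibility of $u$ and of $f$ yields $\Delta p = -\partial_i\partial_j(u_iu_j)$, whose solution via Riesz transforms satisfies $p=R_iR_j(u_iu_j)\in L^{5/(5-2s)}(\mb{R}^5)$ by Calder\'on--Zygmund and the bound $u\otimes u\in L^{5/(5-2s)}$ from Sobolev embedding.

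The last and most delicate step is the local energy inequality \eqref{ssen}. At the Galerkin level $u_N$ is smooth, so its extension $u_N^*$ from Theorem \ref{Rays} is smooth in $\overline{\mb{R}^{n+1}_+}\setminus\{y=0\}$, and multiplying the equation by $u_N\varphi$ and integrating by parts in both $\mb{R}^n$ (for the transport, pressure, and forcing terms) and $\mb{R}^{n+1}_+$ (for the dissipation term, using the vanishing Neumann condition on $u_N^*$ and the boundary identity $(-\Delta)^su_N(x)=c_{n,s}\lim_{y\to 0}y^b\partial_y\oDelta_b u_N^*$) yields \eqref{ssen} as an equality for $u_N$. The task is then to pass the Galerkin identity to the limit: the right-hand side involves only linear and cubic expressions in $u_N$, $u_N^*$, $p_N$, and their derivatives up to order two in the half-space, and these pass by the above strong/weak convergences combined with the continuity of the Poisson-type extension operator in weighted $L^2$; the left-hand side passes as an inequality by weak lower semicontinuity of $U\mapsto \int y^b|\oDelta_b U|^2\Phi\,dx\,dy$ (the integrand is convex and $\Phi\geq 0$).

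The main obstacle I foresee is the justification of weak convergence of $u_N^*$ in the precise spaces needed for each term of the half-space integral to pass to the limit, in particular the cross term $y^b\oDelta_b u_{N,i}^*\,\onabla u_{N,i}^*\cdot\onabla \Phi$, which is quadratic in the extension. Here the weak bound $u_N^*\rightharpoonup u^*$ in the Hilbert space of extensions with $\oDelta_b\in L^2(y^b)$ must be upgraded to strong convergence of $\onabla u_N^*$ in $L^2_{\rm loc}(y^b)$ on the support of $\onabla\Phi$, which I would obtain by interpolating the uniform bound with strong boundary convergence and standard weighted elliptic estimates, following exactly the same reduction as in \cite{TY2} and \cite{CDM}. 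With this in hand the limiting inequality \eqref{ssen} is established and $(u,p)$ is a suitable weak solution.
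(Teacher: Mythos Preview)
The paper does not supply its own proof of this statement; it simply quotes \cite[Proposition 5.3]{TY2} and remarks that the Galerkin construction there extends to $s>1$ (with the case $s<1$ in fact being the harder one). Your proposal is precisely that Galerkin construction---a priori $\dot H^s$ bound via Sobolev embedding, Brouwer fixed point, weak compactness, Calder\'on--Zygmund recovery of the pressure, and passage to the limit in the local energy identity via lower semicontinuity---so it is both correct and exactly the approach the paper has in mind.
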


\subsection{Extension estimates}

We now quote below several inequalities from Colombo--De Lellis--Massaccesi \cite{CDM} comparing a function $u$ and its extension $u^*$ as defined in Theorem \ref{Rays} as well as its derivatives, which will be useful in our subsequent arguments. In \cite{CDM} they are proved for space dimension $3$ but they extend easily to arbitrary space dimension $n$ as stated below. The first lemma below provides control of the extension function $u^*$ by the boundary function $u$.

\begin{Lem}[{{\cite[Lemma 3.4]{CDM}}}]\label{3.4}
    Let $s\in(1,2)$ and let $P$ be the kernel $P(x,y)=\bar{c}_{n,s}\frac{y^{2s}}{(|x|^2+y^2)^{\frac{n+2s}{2}}}$ of Theorem \ref{Rays}. There exists a $C>0$ such that if $u\in L^2(B_1)\cap L^1_{\text{loc}}(\mb{R}^n)$ and
    \begin{align}
        \sup_{R\geq 1} R^{-3s}\left(\mean{B_R}|u|\ dx\right)^2<\infty,\notag
    \end{align}
    then the associated extension $u^*(x,y)=(P(\cdot,y)\ast u)(x)$ belongs to $L^2_{\text{loc}}(\mb{R}^{n+1}_+,y^b)$ and satisfies
    \begin{align}
        \int_{B_1^+}y^b|u^*|^2\ dx\ dy\leq C\left(\int_{B_2}|u|^2\ dx+\sup_{R\geq 1} R^{-3s}\left(\mean{B_R}|u|\ dx\right)^2\right).
    \end{align}
\end{Lem}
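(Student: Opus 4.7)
The plan is to split $u$ into a local and a tail part, $u = u_1 + u_2$ with $u_1 = u\,\mathbf{1}_{B_2}$ and $u_2 = u\,\mathbf{1}_{B_2^c}$, and then estimate $u_1^*$ and $u_2^*$ separately, using that $u^* = u_1^* + u_2^*$ by linearity of the convolution against $P(\cdot,y)$.

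For the local part, I would exploit that $P(\cdot,y) \ge 0$ with $\|P(\cdot,y)\|_{L^1(\mathbb{R}^n)} = 1$ uniformly in $y>0$---the constant $\bar c_{n,s}$ is normalized precisely for this, as one sees from the change of variables $x = yz$. Young's convolution inequality then gives $\|u_1^*(\cdot,y)\|_{L^2(\mathbb{R}^n)} \leq \|u\|_{L^2(B_2)}$ uniformly in $y$, and multiplying by $y^b$ and integrating over $y\in(0,1)$ is finite since $b+1 = 4-2s > 0$ for $s<2$. This already takes care of the first term on the right-hand side.

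For the tail part, whenever $x \in B_1$ and $|\xi| \ge 2$ the triangle inequality yields $|x-\xi| \ge |\xi|/2$, so that
\begin{align*}
P(x-\xi,y) \le C\, \frac{y^{2s}}{|\xi|^{n+2s}}, \qquad y \ge 0.
\end{align*}
Decomposing $B_2^c$ into the dyadic shells $\{2^k \le |\xi| < 2^{k+1}\}$ for $k \ge 1$ and using the hypothesis in the form $\mean{B_R}|u|\,d\xi \le M^{1/2} R^{3s/2}$---where $M$ denotes the supremum appearing in the statement---the $k$-th shell contributes $\lesssim M^{1/2}\, 2^{-k s/2}$ to the integral $\int_{B_2^c} |u(\xi)||\xi|^{-(n+2s)}\,d\xi$, and the resulting geometric series converges. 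This yields $|u_2^*(x,y)|^2 \lesssim M\, y^{4s}$ pointwise on $B_1^+$, and integration in $y \in (0,1)$ against the weight $y^b$ uses $b + 4s = 3 + 2s > 0$. Summing the two contributions gives the claimed bound, and as a byproduct one obtains that the pointwise convolution $u^*(x,y)$ is well-defined.

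The main point to watch is the balance of exponents in the dyadic sum: the kernel decay $2^{-k(n+2s)}$, the shell volume $2^{kn}$, and the growth budget $2^{3ks/2}$ for $\mean{B_{2^{k+1}}}|u|$ combine to $2^{-ks/2}$, summable precisely because $s > 0$. The complementary restriction $s < 2$ enters instead through the weight integrability $b+1 > 0$ at $y=0$ on the local piece, so both parameter conditions are subsumed in the standing assumption $s \in (1,2)$ and no additional hypotheses are needed.
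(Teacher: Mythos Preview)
Your argument is correct. The paper does not actually supply a proof of this lemma; it is quoted verbatim from \cite[Lemma~3.4]{CDM}, so there is no in-paper proof to compare against. Your near--far decomposition $u = u\mathbf{1}_{B_2} + u\mathbf{1}_{B_2^c}$, Young's inequality on the local piece (using $\|P(\cdot,y)\|_{L^1}=1$ and $b+1>0$), and the dyadic kernel bound on the tail (yielding the summable factor $2^{-ks/2}$) is exactly the standard route and matches the proof in \cite{CDM}; the exponent bookkeeping is accurate and both endpoints of $s\in(1,2)$ are used where you indicate.
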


Next we have an interpolation lemma estimating the derivatives of $u^*$.

\begin{Lem}[{{\cite[Lemma 3.3]{CDM}}}]\label{3.3}
    Given $s\in(1,2)$, there exists a $C>0$ such that for $\Phi\in C_0^\infty(\mb{R}^{n+1})$, $u\in \dot{H}^s(\mb{R}^n)$, and $u^*$ its extension as in Theorem \ref{Rays} along with $\eps\in(0,1)$ and $r\in(0,\infty)$ the following inequalities hold:
\begin{subequations}
    \begin{align}
        \int_{\mb{R}^{n+1}_+}y^b|\onabla u^*|^2\varphi^2\ dx\ dy\leq \eps\int_{\mb{R}^{n+1}_+} y^b|\oDelta_b u^*|^2\Phi^2\ dx\ dy+\frac{C}{\eps}\int_{\mb{R}^{n+1}_+}y^b|u^*|^2(\Phi^2+|\onabla\Phi|^2)\ dx\ dy,
        \\
        \int_{B_r^+}y^b|\onabla u^*|^2\ dx\ dy\leq C\left(\int_{B_{2r}^+}y^b|\oDelta_b u^*|^2\ dx\ dy\right)^{\frac{1}{2}}\left(\int_{B_{2r}^+}y^b|u^*|^2\ dx\ dy\right)^{\frac{1}{2}}+\frac{C}{r^2}\int_{B_{2r}^+}y^b|u^*|^2\ dx\ dy.
    \end{align}
\end{subequations}
\end{Lem}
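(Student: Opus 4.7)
The plan is to derive both inequalities from a single integration-by-parts identity. Using $\oDelta_b u^* = y^{-b}\text{Div}(y^b \onabla u^*)$, multiplication by $u^* \Phi^2$ and the product rule give
\begin{align*}
\text{Div}(y^b u^* \Phi^2 \onabla u^*) = y^b u^* \Phi^2 \oDelta_b u^* + y^b \Phi^2 |\onabla u^*|^2 + 2 y^b u^* \Phi\, \onabla u^* \cdot \onabla \Phi.
\end{align*}
I would integrate over $\mb{R}^{n+1}_+$ and apply the divergence theorem. The only boundary contribution lives on $\{y=0\}$ and is a constant multiple of $\int_{\mb{R}^n}\lim_{y\to 0} y^b u^*\,\partial_y u^*\, \Phi^2\, dx$. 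Writing $y^b = y^{2-s}\cdot y^{1-s}$ with $2-s > 0$ and invoking the Neumann-type condition $\lim_{y\to 0} y^{1-s}\partial_y u^* = 0$ from Theorem \ref{Rays}, this term vanishes, yielding the working identity
\begin{align*}
\int_{\mb{R}^{n+1}_+} y^b|\onabla u^*|^2 \Phi^2\, dx\, dy = -\int_{\mb{R}^{n+1}_+} y^b u^* \Phi^2 \oDelta_b u^*\, dx\, dy - 2\int_{\mb{R}^{n+1}_+} y^b u^* \Phi\,\onabla u^* \cdot \onabla\Phi\, dx\, dy.
\end{align*}

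For part (a), I would apply Cauchy--Schwarz and Young's inequality with weight $\eps$ to the first term on the right, producing $\tfrac{\eps}{2}\int y^b|\oDelta_b u^*|^2\Phi^2 + \tfrac{C}{\eps}\int y^b|u^*|^2\Phi^2$, and then apply Cauchy--Schwarz together with Young's using a small fixed weight to the mixed term so as to absorb a factor $\tfrac{1}{2}\int y^b|\onabla u^*|^2\Phi^2$ into the left-hand side, leaving a remainder $C\int y^b|u^*|^2|\onabla\Phi|^2$. Combining these gives (a). For part (b), I would instead take $\Phi = \eta$ to be a standard cutoff between $B_r^+$ and $B_{2r}^+$ with $|\onabla \eta| \leq C/r$, and apply the same identity; Cauchy--Schwarz used directly (without Young's) on the first right-hand side term produces the product $\bigl(\int y^b|\oDelta_b u^*|^2\bigr)^{1/2}\bigl(\int y^b|u^*|^2\bigr)^{1/2}$, while Young's on the mixed term absorbs the $|\onabla u^*|^2$ contribution and leaves the $Cr^{-2}\int y^b|u^*|^2$ remainder.

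The main point requiring care is the vanishing of the boundary term in the integration by parts, since $\Phi$ (and in particular the cutoff $\eta$ used for (b)) need not vanish on $\{y = 0\}$. This is precisely where the Neumann-type condition of the biharmonic extension from Theorem \ref{Rays} becomes essential—an arbitrary finite-energy extension would not suffice. Once the identity is in hand, (a) and (b) reduce to standard applications of weighted Cauchy--Schwarz and Young inequalities with no further delicate structure required.
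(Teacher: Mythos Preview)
The paper does not supply its own proof of this lemma; it is simply quoted from \cite[Lemma~3.3]{CDM}. Your argument is correct and is the standard one (and presumably the one in \cite{CDM}): integrate the identity $\text{Div}(y^b u^*\Phi^2\onabla u^*)=y^b\Phi^2|\onabla u^*|^2+y^b u^*\Phi^2\oDelta_b u^*+2y^b u^*\Phi\,\onabla u^*\cdot\onabla\Phi$, use the boundary condition $\lim_{y\to 0}y^{1-s}\partial_y u^*=0$ together with the factorization $y^b=y^{2-s}\cdot y^{1-s}$ (with $2-s>0$) to eliminate the trace term, and then apply weighted Young/Cauchy--Schwarz as you describe. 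One minor remark: strictly speaking the boundary-term argument requires knowing that $u^*(x,y)$ and $y^{1-s}\partial_y u^*(x,y)$ remain controlled as $y\to 0$ in a way that justifies passing the limit under the integral; this is true for the explicit Poisson-type extension of Theorem~\ref{Rays} (e.g.\ via the representation $\widehat{u^*}(\xi,y)=\hat u(\xi)\phi(|\xi|y)$), and you are right to flag that it is exactly here that one needs the specific biharmonic extension rather than an arbitrary one.
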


Finally we have a Sobolev-type inequality resulting from the application of Lemmas \ref{3.4} and \ref{3.3} to the expression in $u^*$ of the $\dot{H}^s$ norm of $u$ from Theorem \ref{Rays}. We will also restate some of the proof from \cite{CDM} but for arbitrary dimension $n$ because we will need to refer to the proof later.

\begin{Lem}[Lemma 4.4 of \cite{CDM}]\label{4.4}
    Given $s\in(1,2)$ and $r\in(0,1)$, there exists a $C>0$ such that for any $u\in \dot{H}^s(\mb{R}^n)$,
    \begin{align}
        \|u\|^2_{L^{\frac{10}{5-2s}}(B_r)}\leq C\left(\int_{B_1^+}y^b|\oDelta_b u^*|^2\ dx\ dy+\sup_{R\geq\frac{1}{4}}R^{-3s}\mean{B_R}|u|^2\ dx\right).
    \end{align}
\end{Lem}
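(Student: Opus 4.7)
The plan is to reduce the local $L^{\frac{10}{5-2s}}$ bound on $u$ to a global $\dot{H}^s$ bound on a compactly supported cutoff via the Sobolev embedding $\dot{H}^s(\mb{R}^5) \hookrightarrow L^{\frac{10}{5-2s}}(\mb{R}^5)$, and then to estimate that $\dot{H}^s$ norm using the minimality property (c) of Theorem \ref{Rays} applied to a competitor extension built directly from $u^*$.

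First I would pick $\zeta \in C_0^\infty(\mb{R}^{n+1})$ supported in $B_1^+$, identically $1$ on $B_r^+$, and arranged so that $\partial_y\zeta(x,0) = 0$ (for instance, taking $\zeta$ even in $y$). Setting $\Phi = \zeta u^*$ and $w(x) = \zeta(x,0)u(x)$, the function $\Phi$ is an extension of $w$ to $\mb{R}^{n+1}_+$ that satisfies both the Dirichlet condition $\Phi(x,0) = w(x)$ and the degenerate condition $\lim_{y\to 0} y^{1-s}\partial_y \Phi(x,y) = 0$ needed by Theorem \ref{Rays}. Consequently by Theorem \ref{Rays}(b)--(c) and the Sobolev embedding,
\begin{equation*}
\|u\|_{L^{\frac{10}{5-2s}}(B_r)}^2 \leq \|w\|_{L^{\frac{10}{5-2s}}(\mb{R}^5)}^2 \leq C\|w\|_{\dot{H}^s(\mb{R}^5)}^2 \leq C\int_{\mb{R}^{n+1}_+} y^b |\oDelta_b \Phi|^2\ dx\ dy.
\end{equation*}

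Next I would expand $\oDelta_b(\zeta u^*) = \zeta\,\oDelta_b u^* + 2\,\onabla \zeta \cdot \onabla u^* + u^* \oDelta_b \zeta$ and bound the right-hand side by
\begin{equation*}
C\int_{B_1^+} y^b |\oDelta_b u^*|^2\ dx\ dy + C\int_{B_1^+} y^b |\onabla u^*|^2 |\onabla \zeta|^2\ dx\ dy + C\int_{B_1^+} y^b |u^*|^2\ dx\ dy.
\end{equation*}
The middle gradient term I would absorb using Lemma \ref{3.3}(a) with $\eps$ small, paying a further $\int y^b |u^*|^2$ term; this step requires choosing the two auxiliary cutoffs in Lemma \ref{3.3}(a) as nested functions supported in a slightly larger neighborhood of $\supp \onabla \zeta$ but still inside $B_1^+$, which is possible because $r < 1$. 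Finally I would estimate $\int_{B_1^+} y^b |u^*|^2$ via Lemma \ref{3.4}, which bounds it by $\int_{B_2}|u|^2\ dx + \sup_{R\geq 1}R^{-3s}(\mean{B_R}|u|\ dx)^2$. Both terms are dominated by $\sup_{R \geq 1/4} R^{-3s}\mean{B_R}|u|^2\ dx$: the local integral by taking $R = 2$ and accounting for the $R^n$ factor, the tail by Cauchy--Schwarz in the averaging.

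The main technical point is verifying that the boundary conditions required by Theorem \ref{Rays}(c) are preserved under multiplication by $\zeta$ (so that $\Phi$ is a legitimate competitor to $w^*$), and then arranging the nested cutoffs in the absorption step of Lemma \ref{3.3}(a) so that all constants remain independent of $u$, though they may depend on $r$. The remainder is bookkeeping of averages and scales to pass from Lemma \ref{3.4}'s tail to the sup over $R \geq 1/4$ of $R^{-3s}\mean{B_R}|u|^2$.
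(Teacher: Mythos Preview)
Your proposal is correct and follows essentially the same route as the paper's proof: cutoff plus Sobolev embedding, then Theorem~\ref{Rays}(c) with the competitor $u^*\Phi$, expansion of $\oDelta_b(u^*\Phi)$, Lemma~\ref{3.3} to handle the gradient cross term, and Lemma~\ref{3.4} for the $\int y^b|u^*|^2$ term. The paper's version differs only cosmetically---it takes $\Phi$ independent of $y$ on $\{y<\tfrac12\}$ (rather than even in $y$) to secure the boundary condition, and fixes $\eps=\tfrac12$ in Lemma~\ref{3.3}(a) rather than invoking nested cutoffs.
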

\begin{proof}
    Fix $r\in(0,1)$ and let $\Phi$ be a smooth cutoff function between $B_r^+$ and $B_1^+$. We denote $\Phi(x,0)=\varphi(x)$, and assume that the cutoff function $\Phi|_{y<\frac{1}{2}}$ is independent from the variable $y$, so that $\onabla\Phi=\nabla\Phi$ in the set $\{y<\frac{1}{2}\}$. We can estimate
    \begin{align}
        \int_{\mb{R}^n}|(-\Delta)^{s/2}(u\varphi)|^2\ dx&=C\int_{\mb{R}^{n+1}_+}y^b|\oDelta_b(u\varphi)^*|^2\ dx\ dy\leq C\int_{\mb{R}^{n+1}_+} y^b|\oDelta_b(u^*\Phi)|^2\ dx\ dy\notag\notag
        \\
        &\leq C\int_{B_1^+} y^b(|\oDelta_b u^*|^2\Phi^2+|\onabla u^*|^2|\onabla\Phi|^2+|u^*|^2|\oDelta_b\Phi|^2)\ dx\ dy.\notag
    \end{align}
    By Lemma \ref{3.3} we can estimate the middle term containing $\onabla u^*$, taking $\eps=\frac{1}{2}$ so that
    \begin{align}
        \int_{\mb{R}^{n+1}_+}y^b|\oDelta_b(u^*\Phi)|^2\ dx\ dy&\leq C\int_{B_1^+} y^b(|\oDelta_b u^*|^2+|u^*|^2)\ dx\ dy\notag
        \\
        &\leq C\left(\int_{B_1^+} y^b|\oDelta_b u^*|^2\ dx\ dy+\sup_{R\geq\frac{1}{4}}R^{-3s}\mean{B_R}|u|^2\ dx\right),\notag
    \end{align}
    where in the last line we have applied Lemma \ref{3.4}. The conclusion follows from the Sobolev embedding $\dot{H}^s(\mb{R}^n)\hookrightarrow L^{\frac{2n}{n-2s}}(\mb{R}^n)$.
\end{proof}

\section{An energy inequality for rescaled solutions}\label{energysection}

In this section we consider rescalings of a suitable weak solution $(u,p)$ of \eqref{NS5} with force $f$, which will be important in Section \ref{decaysection}. Our choice of rescaling though related is distinct from that in \cite{CDM} and inspired by the methods in \cite{LS}. Let $R>0$, $M\in\mb{R}^n$, $L\in\mb{R}_+$, and $\tilde{p}\in\mb{R}$, and define
    \begin{align}
        v(x)=\frac{u(Rx)-M}{L},\quad q(x)=\frac{p(Rx)-\tilde{p}}{L} R^{2s-1},\quad g(x)=f(Rx).\label{rdef}
    \end{align}
It follows that in the weak sense $(v,q)$ and $g$ solve
\begin{align}
    \begin{cases}
        (-\Delta)^s v+R^{2s-1} M\nabla v+ R^{2s-1}L v\nabla v+\nabla q=\frac{R^{2s-1}}{L} g,
        \\
        \text{div }u=0.\label{rNS5}
    \end{cases}
\end{align}
We will check that such solutions satisfy an energy inequality by a careful change of variables, similar to the procedure detailed in \cite{CDM}.

\begin{Lem}\label{3.1}
    Let $(u,p)$ be a suitable weak solution of \eqref{NS5} with force $f$ and define $(v,q)$ and $g$ as in \eqref{rdef}.
    Then $(v,q)$ with force $g$ satisfies \eqref{rNS5} weakly and further satisfies the following energy inequality for any $\Phi\in C_0^\infty(B_1^+)$ with $\Phi(x,0)=\varphi(x)$:
    \begin{align}
        c_{n,s}\int_{B_1^+} y^b|\oDelta_b v^*|^2\Phi\ dx\ dy&\leq R^{2s-1}L\int_{B_1}\frac{|v|^2}{2}v\cdot\nabla\varphi\ dx+\int_{B_1}qv\cdot\nabla\varphi\ dx\notag
	\\
	&\quad+R^{2s-1}M\cdot\int_{B_1}\frac{|v|^2}{2}\nabla\varphi\ dx+\frac{R^{2s}}{L}\int_{B_1} g\cdot v\varphi\ dx\notag
        \\
        &\quad-c_{n,s}\int_{B_1^+} y^b\oDelta_b v^*(2\onabla v^*\onabla\Phi+ v^*\oDelta_b\Phi)\ dx\ dy.\label{rNS5en}
    \end{align}
\end{Lem}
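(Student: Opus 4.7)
The plan is first to establish \eqref{rNS5} by a direct chain-rule computation and then to derive \eqref{rNS5en} from \eqref{ssen} by the change of variables $x \mapsto Rx$, $y \mapsto Ry$ with the rescaled test function $\tilde\Phi(x,y):=\Phi(x/R,y/R)$. For \eqref{rNS5}, I substitute $u(Rx)=Lv(x)+M$, $p(Rx)=(L/R^{2s-1})q(x)+\tilde p$, and $f(Rx)=g(x)$ into \eqref{NS5} evaluated at $Rx$. The chain rule gives $(-\Delta)^s u|_{Rx}=(L/R^{2s})(-\Delta)^s v$, $\nabla u|_{Rx}=(L/R)\nabla v$, and $\nabla p|_{Rx}=(L/R^{2s})\nabla q$; multiplying through by $R^{2s}/L$ yields \eqref{rNS5} (with the natural $R^{2s}/L$ scaling for the forcing). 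The divergence-free condition on $v$ is immediate.

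For the energy inequality, $\tilde\Phi$ is admissible in \eqref{ssen} since $\tilde\Phi\in C_0^\infty(B_R^+)$, $\tilde\Phi(x,0)=\varphi(x/R)=:\tilde\varphi(x)$, and $\partial_y\tilde\Phi(x,0)=0$. A direct check shows that $W(x,y):=L\,v^*(x/R,y/R)+M$ satisfies $\oDelta_b^2 W=0$, $W(x,0)=u(x)$, and $\lim_{y\to 0}y^{1-s}\partial_y W=0$, so by the uniqueness assertion in Theorem \ref{Rays},
\[ u^*(x,y)=L\,v^*(x/R,y/R)+M, \]
and hence $\onabla u^*=(L/R)\onabla v^*$ and $\oDelta_b u^*=(L/R^2)\oDelta_b v^*$ evaluated at $(x/R,y/R)$. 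Upon changing variables in \eqref{ssen}, the Jacobian $R^{n+1}$ and the weight factor $R^b=R^{3-2s}$ combine with the derivative scalings to produce a common prefactor $L^2 R^{n-2s}$ multiplying the LHS and the ``main'' piece of the last extension term on the RHS of \eqref{rNS5en}.

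The remaining RHS terms, obtained by expanding $\tfrac12|u|^2 u$ and $pu$ and using $\text{div}\,v=0$ with compact support of $\varphi$, produce the kinetic, pressure, drift, and forcing terms of \eqref{rNS5en} (each with the same $L^2 R^{n-2s}$ prefactor) together with a residue $\mathcal{E}$ of $M$-dependent terms; all $\tilde p$-terms vanish after IBP. Up to the prefactor,
\begin{align*}
\mathcal{E} := & -R^{2s-1}\!\int (v\cdot\nabla v)\cdot M\,\varphi\,dx - \tfrac{R^{2s-1}}{L}\!\int M\cdot(M\cdot\nabla v)\,\varphi\,dx + \tfrac{1}{L}\,M\cdot\!\int q\,\nabla\varphi\,dx \\
& + \tfrac{R^{2s}}{L^2}\,M\cdot\!\int g\,\varphi\,dx - \tfrac{c_{n,s}}{L}\,M\cdot\!\int y^b\,\oDelta_b v^*\,\oDelta_b\Phi\,dx\,dy.
\end{align*}
Testing the already-established \eqref{rNS5} against $M\,\varphi$ (using IBP on the pressure term) shows that the first four terms sum to $L^{-1}\int M\cdot(-\Delta)^s v\,\varphi\,dx$, while two applications of $y^b\oDelta_b=\text{Div}(y^b\onabla\,\cdot)$, using $\partial_y\Phi(\cdot,0)=0$, $\oDelta_b^2 v^*=0$, and Theorem \ref{Rays}(a), yield
\[ c_{n,s}\int_{\mb{R}^{n+1}_+} y^b\,\oDelta_b v^*_i\,\oDelta_b\Phi\,dx\,dy=\int_{\mb{R}^n}(-\Delta)^s v_i\,\varphi\,dx, \]
identifying the last term of $\mathcal{E}$ with $-L^{-1}\int M\cdot(-\Delta)^s v\,\varphi\,dx$. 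Thus $\mathcal{E}=0$, and dividing through by $L^2 R^{n-2s}$ gives \eqref{rNS5en}. \textbf{The main obstacle} is the identification and cancellation of these $M$-dependent residues, which requires combining the equation \eqref{rNS5} itself, tested against the constant $M$, with the extension-to-boundary identity obtained by two integrations by parts in the bulk; equivalently, one must perform the change of variables and verify the rescaled equation \emph{before} attempting to match the energy inequality term-by-term.
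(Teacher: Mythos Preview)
Your proposal is correct and follows essentially the same strategy as the paper: rescale the test function, change variables in \eqref{ssen}, expand all terms in $v$, $q$, $g$, and cancel the $M$-dependent residues by using the equation itself tested against (a multiple of) the constant vector $M$. The paper carries out this last cancellation by testing the original $u$-equation against $\psi$ and then taking the scalar product with $M$ (its identity \eqref{weakuidentity}), whereas you test the already-rescaled equation \eqref{rNS5} against $M\varphi$; after the change of variables these are the same computation, so there is no substantive difference.

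One small technical remark: your appeal to the uniqueness clause of Theorem~\ref{Rays} to conclude $u^*(x,y)=L\,v^*(x/R,y/R)+M$ is slightly informal, since the constant $M$ does not lie in $\dot H^s$. The cleanest justification (implicit in both your argument and the paper's) is via the explicit Poisson-kernel formula $u^*=P(\cdot,y)\ast u$, using that $\int P(x,y)\,dx=1$ and the scaling $P(Rx,Ry)=R^{-n}P(x,y)$; this immediately gives the stated relation and hence $\oDelta_b v^*(x,y)=\tfrac{R^2}{L}(\oDelta_b u^*)(Rx,Ry)$, which is all that is actually needed.
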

\begin{proof}
	First, we may assume $\tilde{p}=0$, since $\tilde{p}$ would only add a term to the inequality which vanishes,
    \begin{align}
        \int_{\mb{R}^n}\tilde{p} u(x)\cdot\nabla\varphi(x)\ dx=0.\notag
    \end{align}
    From our test function $\Phi\in C_0^\infty(B_1^+)$ we define $\Psi(Rx,Ry)=\Phi(x,y)$ so that $\Psi(x,y)\in C_0^\infty(B_R^+)$. We let $\psi(x)=\Psi(x,0)$ and $\varphi(x)=\Phi(x,0)$. We now substitute $v$ and $v^*$ where possible for $u$ and $u^*$ in the energy inequality \eqref{ssen}. First, using that $\oDelta_b v^*(x,y)=\frac{R^2}{L}(\oDelta_b u^*)(Rx,Ry)$ we have
    \begin{align}
        c_{n,s}\int_{\mb{R}^{n+1}_+} y^b|\oDelta_b u^*|^2\Psi\ dx\ dy= c_{n,s}R^{n-2s} L^2\int_{B_1^+} y^b|\oDelta_b v^*|^2\Phi\ dx\ dy.\label{LHSineq1}
    \end{align}
    Next we observe that
    \begin{align}
        &\int_{\mb{R}^n}\left(\frac{|u|^2}{2}+p\right) u\cdot\nabla\psi\ dx\notag
        \\
        &=R^{n-1}\left[\int_{B_1} L^2\left(\frac{L|v|^2}{2}+ R^{1-2s} q\right)v\cdot\nabla\varphi\ dx\right.+\int_{B_1} L^2\frac{|v|^2}{2} M\cdot\nabla\varphi\ dx\label{upsubidentity}
        \\
        &\quad+\int_{B_1} (M\cdot Lv) (Lv+M)\cdot\nabla\varphi\ dx+\int_{B_1}\frac{|M|^2}{2}(Lv+M)\cdot\nabla\varphi\ dx+\left.\int_{B_1} LR^{1-2s}q M\cdot\nabla\varphi\ dx\right].\notag
    \end{align}
    The second to last term on the right vanishes because $v$ is divergence free. To address the two other terms on the last line we use the equation satisfied by $u$:
    \begin{align}
        0&=\int_{\mb{R}^n} u\nabla\psi\cdot u+p\nabla\psi+ f\psi-\psi(-\Delta^s) u\ dx\notag
        \\
        &=R^{n-1}\int_{B_1} (Lv+M)\nabla\varphi\cdot(Lv+M)\ dx+R^{n-1}\int_{B_1}\nabla\varphi LR^{1-2s} q\ dx+R^n\int_{B_1}g\varphi\ dx\notag
        \\
        &\quad-c_{n,s}R^{n+1} R^b R^{-4}L\int_{B_1^+}y^b\oDelta_b v^*\oDelta_b\Phi\ dx\ dy,\label{weakuidentity}
    \end{align}
	where we have used the properties of $u^*$ and $\Phi$ to observe that
	\begin{align}
	\int \psi(-\Delta)^s u\ dx=c_{n,s}\int_{\mb{R}^n}\lim_{y\rightarrow 0} y^b\psi\partial_y\oDelta_b u^*\ dx&=-c_{n,s}\int_{\mb{R}^{n+1}_+} y^b\onabla\oDelta_b u^*\cdot\onabla\Psi\ dx\ dy\notag
	\\
	&=c_{n,s}\int_{\mb{R}^{n+1}_+} y^b\oDelta_b u^*\oDelta_b\Psi\ dx\ dy.\notag
	\end{align}
    We take the scalar product of $M$ with \eqref{weakuidentity} and combine this with \eqref{upsubidentity} to see that
    \begin{align}
        &\int_{\mb{R}^n}\left(\frac{|u|^2}{2}+p\right) u\cdot\nabla\psi\ dx\notag
        \\
        &=R^{n-1}\left[\int_{B_1} L^2\left(\frac{L|v|^2}{2}+ R^{1-2s} q\right)v\cdot\nabla\varphi\ dx\right.\left.+\int_{B_1} L^2\frac{|v|^2}{2} M\cdot\nabla\varphi\ dx\right]\notag
        \\
        &\quad-R^n\int_{B_1}M\cdot g\varphi\ dx+ c_{n,s} R^{n-2s}\int_{B_1^+} y^b M\oDelta_b v^*\oDelta_b\Phi\ dx\ dy.\label{RHSvp1}
    \end{align}
    Finally we have
    \begin{align}
        c_{n,s}\int_{\mb{R}^{n+1}_+} y^b\oDelta_b u^*(2\onabla\Psi\onabla u^*+u^*\oDelta_b\Psi)\ dx\ dy&=c_{n,s} R^{n-2s} L^2\int_{B_1^+} y^b\oDelta_b v^*(2\onabla\Phi\onabla v^*)\ dx\ dy\label{extsubdirect}
        \\
        &\quad+ c_{n,s} R^{n-2s}L \int_{B_1^+} y^b\oDelta_b v^*(Lv^*+M)\oDelta_b\Phi\ dx\ dy.\notag
    \end{align}
    Therefore, combining \eqref{ssen} which is satisfied by $(u,p)$ with \eqref{LHSineq1}, \eqref{RHSvp1}, and \eqref{extsubdirect} gives the desired energy inequality \eqref{rNS5en}.
\end{proof}

We call these $(v,q)$ suitable weak solutions of \eqref{rNS5} with force $g$. Now we simplify \eqref{rNS5en} by applying the extension estimates of Section \ref{prelim} to establish control of a local energy term $\oDelta_b v^*$ by only $v^*$, $v$, $q$, and $g$, without any derivatives of $v^*$.

\begin{Lem}\label{3.2}
    Let $M\in\mb{R}^n$, and let $(v,q)$ with force $g$ be a suitable weak solution of \eqref{rNS5}. Then we have
    \begin{align}
        \int_{B_{3/4}^+}y^b|\oDelta_b v^*|^2\ dx\ dy&\leq C R^{2s-1}\int_{B_1}L|v|^3\ dx+C\int_{B_1}|q||v|\ dx+CR^{2s-1}|M|\int_{B_1}|v|^2\ dx\notag
        \\
        &\quad+C\int_{B_1^+}y^b|v^*|^2\ dx\ dy+C\frac{R^{2s}}{L}\int_{B_1}|g||v|\ dx.\label{rescaledenineq}
    \end{align}
\end{Lem}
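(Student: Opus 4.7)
My plan is to localize the energy inequality \eqref{rNS5en} with a cutoff of the form $\Phi=\eta^4$, absorb the resulting $\oDelta_b v^*$ terms on the right via Young's inequality together with the interpolation Lemma \ref{3.3}(a), and close the argument by a Giaquinta-type iteration over nested radii.

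For $3/4\leq\rho<\sigma\leq 1$, I would take $\eta$ a smooth nonnegative cutoff with $\eta\equiv 1$ on $B_\rho^+$ and $\supp\eta\subseteq B_{(\rho+\sigma)/2}^+$, satisfying the standard bounds $|\onabla\eta|\les(\sigma-\rho)^{-1}$ and $|\oDelta_b\eta|\les(\sigma-\rho)^{-2}$, together with $\partial_y\eta(\cdot,0)=0$ in $\mb R^n$. The test function $\Phi=\eta^4$ then satisfies the convenient pointwise inequalities $|\onabla\Phi|^2\les(\sigma-\rho)^{-2}\Phi$ and $|\oDelta_b\Phi|^2\les(\sigma-\rho)^{-4}\Phi$, which are tailored for Young's inequality. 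Using $\Phi$ in \eqref{rNS5en}, the four boundary integrals involving $v$, $q$, $g$, and $M$ are bounded directly via $|\nabla\varphi|\les(\sigma-\rho)^{-1}$, yielding the corresponding four RHS terms of \eqref{rescaledenineq} multiplied by an extra factor $(\sigma-\rho)^{-1}$. For the two interior ``bad'' terms, Young's inequality with a small $\delta>0$ gives, using the bounds above,
\begin{align*}
\left|\int y^b\oDelta_b v^*\cdot 2\onabla v^*\cdot\onabla\Phi\right| &\leq \delta\int y^b|\oDelta_b v^*|^2\Phi + \frac{C_\delta}{(\sigma-\rho)^2}\int_{B_\sigma^+} y^b|\onabla v^*|^2,\\
\left|\int y^b\oDelta_b v^*\cdot v^*\cdot\oDelta_b\Phi\right| &\leq \delta\int y^b|\oDelta_b v^*|^2\Phi + \frac{C_\delta}{(\sigma-\rho)^4}\int_{B_\sigma^+} y^b|v^*|^2,
\end{align*}
after which the $\delta$-terms are absorbed into the LHS of \eqref{rNS5en}, using that $\int y^b|\oDelta_b v^*|^2\Phi\geq\int_{B_\rho^+}y^b|\oDelta_b v^*|^2$.

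The residual gradient term is controlled by Lemma \ref{3.3}(a) applied with a second cutoff supported in $B_\sigma^+$ and equal to $1$ on $B_{(\rho+\sigma)/2}^+$: for any $\eps>0$,
\[
\int_{B_{(\rho+\sigma)/2}^+}y^b|\onabla v^*|^2\leq\eps\int_{B_\sigma^+}y^b|\oDelta_b v^*|^2+\frac{C_\eps}{(\sigma-\rho)^2}\int_{B_1^+}y^b|v^*|^2.
\]
Writing $I(r)=\int_{B_r^+}y^b|\oDelta_b v^*|^2$ and choosing $\eps=\eps_0(\sigma-\rho)^2$ for a sufficiently small universal $\eps_0>0$, the combined estimates yield
\[
I(\rho)\leq\tfrac{1}{2}I(\sigma)+\frac{C}{(\sigma-\rho)^6}\bigl[\text{RHS of }\eqref{rescaledenineq}\bigr]\quad\text{for all }3/4\leq\rho<\sigma\leq 1.
\]
Since $I(r)$ is finite for every $r$ by the global identity in Theorem \ref{Rays}(b), the standard Giaquinta iteration lemma then gives $I(3/4)$ bounded by the RHS of \eqref{rescaledenineq}, as claimed.

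The principal obstacle is that Lemma \ref{3.3}(a) — our only tool for passing from $\onabla v^*$ to $\oDelta_b v^*$ and $v^*$ — necessarily reintroduces an integral of $|\oDelta_b v^*|^2$ on a strictly larger ball, ruling out any single-step absorption; one must therefore work with a one-parameter family of radii and close the estimate through iteration.
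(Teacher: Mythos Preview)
Your proof is correct and follows essentially the same strategy as the paper's: localize \eqref{rNS5en} with a cutoff, handle the two bad interior terms via Young's inequality together with Lemma \ref{3.3}(a), and close by the Giaquinta--Giusti iteration lemma over nested radii. The only cosmetic difference is that the paper uses a plain cutoff whose derivatives are supported in an annulus (yielding the hole-filling form $h(r)\leq C(h(s)-h(r))+A$), whereas you use $\Phi=\eta^4$ to absorb the $\delta$-terms directly into the weighted left side and obtain $I(\rho)\leq\tfrac12 I(\sigma)+A$; both feed into the same iteration.
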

\begin{proof}
    We take $r,s\in(3/4,1)$ with $r<s$ and $r'=(2r+s)/3$, $s'=(r+2s)/3$ and a cutoff function $\Phi$ supported in $B^+_{s'}$ which is $1$ on $B_{r'}^+$ satisfying
    \begin{align}
        |\partial_y\Phi|+|\onabla\Phi|\leq\frac{C}{s-r},\quad|\onabla^2\Phi|\leq\frac{C}{(s-r)^2}.\notag
    \end{align}
    We further assume that $\partial_y\Phi=0$ on $\{y<\frac{1}{2}\}$, so that $|\oDelta_b\Phi|\leq C(r-s)^{-2}$ as well, and take this $\Phi$ to be the test function in \eqref{rNS5en}. Setting $h(r)=c_{n,s}\int_{B_r^+}y^b|\oDelta_b v^*|^2\ dx\ dy$, we then have
    \begin{align}
        h(r)&\leq C R^{2s-1}\int_{B_{s'}\backslash B_{r'}}\frac{L|v|^3}{s-r}\ dx+C\int_{B_{s'}\backslash B_{r'}}\frac{|q||v|}{s-r}\ dx+CR^{2s-1}|M|\int_{B_{s'}\backslash B_{r'}}\frac{|v|^2}{s-r}\ dx\notag
        \\
        &\quad+C\int_{B_{s'}^+\backslash B_{r'}^+}\frac{y^b|\oDelta_b v^*||\onabla v^*|}{s-r}+\frac{y^b|\oDelta_b v^*||v^*|}{(s-r)^2}\ dx\ dy+C\frac{R^{2s}}{L}\int_{B_{s'}}|g||v|\ dx.\label{en2start}
    \end{align}
    We now look at the two integrals in $\mb{R}^{n+1}_+$. By Young's inequality we have
    \begin{align}
        \int_{B_{s'}^+\backslash B_{r'}^+}\frac{y^b|\onabla v^*||\oDelta_b v^*|}{s-r}\ dx\ dy&\leq \int_{B_{s'}^+\backslash B_{r'}^+} y^b|\oDelta_b v^*|^2\ dx\ dy+\frac{C}{(s-r)^2}\int_{B_{s'}^+\backslash B_{r'}^+}y^b|\onabla v^*|^2\ dx\ dy.\notag
        \\
        &\leq (h(s)-h(r))+\frac{C}{(s-r)^2}\int_{B_{s'}^+\backslash B_{r'}^+}y^b|\onabla v^*|^2\ dx\ dy,\label{en2Young}
    \end{align}
    and similarly
    \begin{align}
        \int_{B_{s'}^+\backslash B_{r'}^+}\frac{y^b|\oDelta_b v^*||v^*|}{(s-r)^2}\ dx\ dy&\leq\int_{B_{s'}^+\backslash B_{r'}^+}y^b|\oDelta_b v^*|^2\ dx\ dy+\frac{1}{(s-r)^4}\int_{B_{s'}^+\backslash B_{r'}^+}y^b|v^*|^2\ dx\ dy\notag
        \\
        &\leq (h(s)-h(r))+\frac{1}{(s-r)^4}\int_{B_1^+}y^b|v^*|^2\ dx\ dy.
    \end{align}
    To control the integral involving $\onabla v^*$ in \eqref{en2Young} we apply Lemma \ref{3.3} with a test function $\Psi\in C_0^\infty(\mb{R}^{n+1})$ satisfying $\text{supp }\Psi\cap\mb{R}^{n+1}_+\subset B_s^+\backslash B_r^+$, $\Psi\equiv 1$ in $B_{s'}^+\backslash B_{r'}^+$, and $|\onabla\Psi|\leq\frac{C}{s-r}$ to find that
    \begin{align}
        \frac{1}{(s-r)^2}\int_{B^+_{s'}\backslash B^+_{r'}} y^b|\onabla v^*|^2\ dx\ dy&\leq\int_{B_s^+\backslash B_r^+}y^b|\oDelta v^*|^2\ dx\ dy+\frac{C}{(s-r)^4}\int_{B_1^+}y^b|v^*|^2\ dx\ dy\notag
        \\
        &\leq (h(s)-h(r))+\frac{C}{(s-r)^4}\int_{B_1^+} y^b|v^*|^2\ dx\ dy.\label{en2end}
    \end{align}
    Combining \eqref{en2start}--\eqref{en2end} therefore gives
    \begin{align}
        h(r)&\leq C(h(s)-h(r))+CR^{2s-1}\frac{1 }{s-r}\int_{B_{s'}\backslash B_{r'}}L|v|^3\ dx+C\frac{1}{s-r}\int_{B_{s'}\backslash B_{r'}}|q||v|\ dx\notag
        \\
        &\quad+CR^{2s-1}|M|\frac{1}{s-r}\int_{B_{s'}\backslash B_{r'}}|v|^2\ dx+C\frac{R^{2s}}{L}\int_{B_1}|g||v|\ dx\notag
        \\
        &\quad+C\frac{1}{(s-r)^4}\int_{B_1^+}y^b|v^*|^2\ dx\ dy,\notag
    \end{align}
    so that applying Lemma 6.1 of \cite{Giusti} as in the proof of Lemma 3.2 in \cite{CDM} we obtain the conclusion of the lemma.
\end{proof}

\section{Decay estimates}\label{decaysection}

We now prove decay estimates for suitable weak solutions of \eqref{NS5} via a rescaling argument using excess and energy quantities as well as some associated estimates from Colombo--De Lellis--Massaccesi \cite{CDM}. Despite some similarities, however, there are important differences between their approach and ours because we must deal with a nonzero external force $f$ while \cite{CDM} works with $f=0$. In order to accommodate nonzero $f$ we will adapt to the fractional setting the blow-up arguments of Ladyzhenskaya--Seregin \cite{LS}. First we will define the energy $\ol{E}$ used in this section in order to state our main result. In what follows we fix the exponent $s\in(1,2)$ in \eqref{NS5} as well as the $\gamma>0$ for which $f\in M_{2s,\gamma}$, and do not track below the dependence on $s$ or $\gamma$ of the constants below.

\begin{Def}[Energy]
	Let $\ol{E}(u,p;x,r)=\ol{E^V}(u;x,r)+\ol{E^P}(p;x,r)+\ol{E^{nl}}(u;x,r)$, where
    \begin{align}
        \ol{E^V}(u;x,r)&=\left(\mean{B_r(x)}|u|^3\ dx\right)^{\frac{1}{3}}\notag
        \\
        \ol{E^P}(p;x,r)&=r^{2s-1}\left(\mean{B_r(x)}|p|^{\frac{3}{2}}\ dx\right)^{\frac{2}{3}}\notag
        \\
        \ol{E^{nl}}(u;x,r)&=\left(\sup_{R\geq\frac{1}{4}r}\left(\frac{r}{R}\right)^{3s}\mean{B_R(x)}|u|^2\ dx\right)^{\frac{1}{2}}.\notag
    \end{align}
\end{Def}
Here, $E^V$ denotes an energy of the velocity field $u$, $E^P$ denotes an energy of the pressure $p$, and $E^{nl}$ denotes a ``nonlocal'' energy involving the velocity field, in the sense that it depends on $u$ on all of $\mb{R}^5$. Above $(h)_{B_r(x)}$ denotes $\textmean{B_r(x)}h\ dx$. Many of our statements do not depend on the point $x$, so often we take $x=0$, denoting $\ol{E}(u,p;r)=\ol{E}(u,p;0,r)$ and so on. For averages we will also write $(h)_r$ to denote $(h)_{B_r}$, an average across a ball of radius $r$ centered at the origin. The main result of this section is Theorem \ref{main} below.

\begin{Thm}\label{main}
    There exists $\ol{\eps_0}$ such that if $(u,p)$ is a suitable weak solution of \eqref{NS5} with force $f$ and 
    \begin{align}
        \liminf_{r\rightarrow 0}r^{2s-1}\ol{E}(u,p;x,r)<\ol{\eps_0},
    \end{align}
    then $x$ is a regular point of $u$.
\end{Thm}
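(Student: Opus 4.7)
The plan is to prove Theorem \ref{main} by a blowup/compactness argument in the spirit of Ladyzhenskaya--Seregin \cite{LS}, adapted to the fractional setting via the extension-based estimates from \cite{CDM} already assembled in Lemmas \ref{3.1}--\ref{4.4}. The goal is an excess decay estimate: for a small universal $\ol{\eps_0}$ and some $\theta\in(0,1)$ to be chosen, if $r^{2s-1}\ol{E}(u,p;x,r)<\ol{\eps_0}$ then
\[
\ol{E}(u,p;x,\theta r)\leq \tfrac12\ol{E}(u,p;x,r) + C(r),
\]
where $C(r)\to 0$ as $r\to 0$ thanks to $f\in M_{2s,\gamma}$. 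Iterating this estimate at scales $\theta^k r$ yields Morrey-type decay $\ol{E^V}(u;x,\theta^k r)\les \theta^{k\alpha}$, which upgrades to a uniform bound on $u$ in a neighborhood of $x$ by continuity of $\ol{E}$ in the base point.

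\textbf{Blowup setup.} I argue by contradiction: if the decay fails, there exist sequences $(u_k,p_k)$, $f_k\in M_{2s,\gamma}$, $x_k$, $r_k$ with $\eps_k:=r_k^{2s-1}\ol{E}(u_k,p_k;x_k,r_k)\to 0$ along which the conclusion fails at some fixed $\theta$. Apply Lemma \ref{3.1} with $R=r_k$, $M_k=(u_k)_{B_{r_k}(x_k)}$, $\tilde p_k=(p_k)_{B_{r_k}(x_k)}$, and the LS-style choice $L_k=\max\bigl(\ol{E}(u_k,p_k;x_k,r_k),\,r_k^\gamma\bigr)$. Then $|M_k|\le \ol{E^V}(u_k;x_k,r_k)$ gives $r_k^{2s-1}|M_k|\les\eps_k\to 0$; the nonlinear coefficient satisfies $r_k^{2s-1}L_k\to 0$; the rescaled force satisfies $\|\tfrac{r_k^{2s}}{L_k}g_k\|_{L^2(B_1)}\les c_\gamma(f_k)r_k^\gamma/L_k\les 1$; and the rescaled pair $(v_k,q_k)$ satisfies $\ol{E}(v_k,q_k;0,1)\les 1$.

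\textbf{Limiting linear problem.} Lemma \ref{3.2} applied to $(v_k,q_k)$ combined with Lemmas \ref{3.3}--\ref{4.4} yields uniform bounds on $v_k$ in $\dot H^s(B_{3/4})$, on $v_k^*$ in $H^1(B_{3/4}^+,y^b)$ weighted by $\oDelta_b$, and on $q_k$ in $L^{3/2}(B_{3/4})$, while the nonlocal term $\ol{E^{nl}}(v_k;0,1)\les 1$ is controlled via Lemma \ref{3.4}. Rellich compactness for the fractional embedding $\dot H^s \hookrightarrow L^3_{\text{loc}}$ gives $v_k\to v_\infty$ strongly in $L^3_{\text{loc}}$, which is enough to pass to the limit in the quadratic terms (whose small coefficients let them vanish anyway) and in the pressure term. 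The limit $(v_\infty,q_\infty)$ then solves the linear fractional Stokes system $(-\Delta)^s v_\infty+\nabla q_\infty=g_\infty$, $\div v_\infty=0$, where $g_\infty$ is the weak $L^2$ limit of $\tfrac{r_k^{2s}}{L_k}g_k$. For this elliptic pseudodifferential system one has the interior decay
\[
\ol{E}(v_\infty,q_\infty;0,\theta)\le C_0\,\theta\,\ol{E}(v_\infty,q_\infty;0,1)+C_0\|g_\infty\|_{L^2(B_1)},
\]
and choosing $\theta$ small so that $C_0\theta<\tfrac14$ contradicts the failure of decay at scale $\theta$ for $(v_k,q_k)$, once $r_k$ is taken small enough that the force contribution $\|g_\infty\|_{L^2}$ is dominated.

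\textbf{Expected main obstacles.} The most delicate points are (i) passing to the limit in the pressure term $\int q_k v_k\cdot\nabla\varphi$, which requires strong convergence of $v_k$ and weak control on $q_k$ obtained from the Stokes-type elliptic estimate applied to the divergence of \eqref{rNS5}; (ii) the dichotomy between the ``self-similar'' regime $\ol{E}_k\ge r_k^\gamma$ and the ``force-dominated'' regime $\ol{E}_k<r_k^\gamma$, in which the conclusion is extracted directly from the linear estimate with explicit dependence on the force Morrey norm rather than by compactness; and (iii) controlling the nonlocal excess $\ol{E^{nl}}$ along the blowup and in the limit, which is handled by combining Lemma \ref{3.4} with the monotonicity built into the supremum defining $\ol{E^{nl}}$, so that smallness of $\ol{E^{nl}}$ at scale $r_k$ passes to scale $\theta r_k$ with only a bounded loss. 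Iterating the resulting excess decay and using continuity of $\ol{E}$ in $x$ then yields regularity of $u$ at $x$, completing the proof.
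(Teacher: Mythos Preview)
Your overall strategy (blowup to a linear limit, then iterate an excess decay) is the right one, but the proposal as written has a structural gap that makes the main step false.

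\textbf{The decay you state cannot hold.} You aim to prove $\ol{E}(u,p;x,\theta r)\le\tfrac12\ol{E}(u,p;x,r)+C(r)$ with $C(r)\to 0$, and then iterate to get $\ol{E^V}(u;x,\theta^k r)\lesssim\theta^{k\alpha}$. But $\ol{E^V}(u;x,r)=(\textmean_{B_r(x)}|u|^3)^{1/3}\to|u(x)|$ as $r\to 0$ for any continuous $u$, so this quantity \emph{does not decay} unless $u(x)=0$. What decays is the \emph{excess} $E^V(u;x,r)=(\textmean_{B_r(x)}|u-(u)_{B_r(x)}|^3)^{1/3}$, and the paper's Proposition~\ref{6.1} is stated for $E$, not $\ol{E}$. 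The passage from smallness of $r^{2s-1}\ol{E}$ to smallness of $E$ (plus a bound on the average) is a separate step (Lemma~\ref{2.6}).

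\textbf{This forces a drift term you are missing.} Once you normalize by the excess and subtract the mean $M_k=(u_k)_{r_k}$, the coefficient $r_k^{2s-1}M_k$ in the rescaled equation is only \emph{bounded}, not tending to zero: smallness of $E(u_k;r_k)$ says nothing about $r_k^{2s-1}|(u_k)_{r_k}|$. The paper tracks this separately via $A(u;r)=r^{2s-1}|(u)_r|<M$, and the limiting linear problem retains the drift $M_0\cdot\nabla v$ (this is exactly \eqref{lineq} and Lemma~\ref{5.1}). Your limit $(-\Delta)^s v_\infty+\nabla q_\infty=g_\infty$ drops it. In the iteration (Lemma~\ref{2.5}) one must also re-verify $A(u;\theta^k r)<M$ at every step, which is an extra argument you do not mention.

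\textbf{The force does not vanish with your normalization.} With $L_k=\max(\ol{E}_k,r_k^\gamma)$, in the regime $L_k=r_k^\gamma$ the rescaled force satisfies $\|\tfrac{r_k^{2s}}{L_k}g_k\|_{L^2(B_1)}\sim c_\gamma(f_k)$, which is $O(1)$; so $g_\infty$ is generically nonzero and you cannot get a contradiction by taking $\theta$ small. Your obstacle (ii) names the problem but does not solve it. The paper's device is to normalize by $\eps_k=E(u_k,p_k;r_k)+c_\gamma(f_k)r_k^{\beta}$ with $\beta<\gamma$; then the rescaled force has $L^2(B_1)$ norm $\le r_k^{\gamma-\beta}\to 0$ and the limit equation is homogeneous (see the end of the proof of Lemma~\ref{compact}). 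No dichotomy is needed.

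In short: replace $\ol{E}$ by the excess $E$ in your decay estimate, keep separate track of the bounded average $A$ so that the limiting equation carries a drift, and include the force term in the normalization $\eps_k$ so that it disappears in the limit. With those three corrections your outline becomes the paper's proof.
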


The remainder of this section is dedicated to proving Theorem \ref{main} as follows. In the first part of this section we prove the main decay estimate Proposition \ref{6.1} which has a smallness hypothesis on $E$ with the help of a regularity lemma, Lemma \ref{5.1}, which we first assume in the proof of Proposition \ref{6.1} and then justify afterwards. In the second part of this section we prove Theorem \ref{main} from Proposition \ref{6.1}. First we iterate Proposition \ref{6.1} to prove Lemma \ref{2.5}. Then Lemma \ref{2.6} shows that the original smallness hypothesis on $E$ can also hold with $E$ replaced with $\ol{E}$, and finally Theorem \ref{main} follows from this by using the rescaling properties of \eqref{NS5} and the energy $\ol{E}$.

\subsection{Proof of Proposition \ref{6.1}}

Before we state Proposition \ref{6.1} we need to define the excess quantities $E$.

\begin{Def}[Excess]
	Let $E(u,p;x,r)=E^V(u;x,r)+E^P(p;x,r)+E^{nl}(u;x,r)$, where
	 \begin{align}
        E^V(u;x,r)&=\left(\mean{B_r(x)}|u-(u)_{B_r(x)}|^3\ dx\right)^{\frac{1}{3}}\notag
        \\
        E^P(p;x,r)&=r^{2s-1}\left(\mean{B_r(x)}|p-(p)_{B_r(x)}|^{\frac{3}{2}}\ dx\right)^{\frac{2}{3}}\notag
        \\
        E^{nl}(u;x,r)&=\left(\sup_{R\geq\frac{1}{4}r}\left(\frac{r}{R}\right)^{3s}\mean{B_R(x)}|u-(u)_{B_r(x)}|^2\ dx\right)^{\frac{1}{2}}.\notag
	\end{align}
\end{Def}

\begin{Prop}[Main decay estimate]\label{6.1}
    Given $\theta,M,\beta$ satisfying
    \begin{align}
        0<\theta\leq\frac{1}{2},\quad M\geq 3,\quad 0<\beta<\gamma,\notag
    \end{align}
    there exist $\eps_1>0$ and $R_1>0$ depending on $\theta,M,\beta$ so that if $(u,p)$ is a suitable weak solution of \eqref{NS5} with force $f\in M_{2s,\gamma}$ then
    \begin{align}
        \begin{cases}
            A(u;r)=r^{2s-1}|(u)_r|<M
            \\
            E(u,p;r)+c_\gamma(f) r^\beta<\eps_1,\notag
        \end{cases}
    \end{align}
    for any $r<R_1$ implies the decay estimate
    \begin{align}
        E(u,p;\theta r)\leq c_1\theta^{\alpha_1}(E(u,p;r)+c_\gamma(f)r^\beta).
    \end{align}
    Here $\alpha_1$ can be fixed to be any number in $(0,1)$ and $c_1$ depends only on $M$ and the choice of $\alpha_1$.
\end{Prop}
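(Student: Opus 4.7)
The plan is to establish Proposition \ref{6.1} by a contradiction-based blow-up argument in the spirit of Ladyzhenskaya--Seregin \cite{LS}, routed through the rescaling framework of Lemmas \ref{3.1}--\ref{3.2}. Suppose the decay fails for some fixed $\theta \in (0, 1/2]$, $M \geq 3$, $\beta \in (0, \gamma)$, and $\alpha_1 \in (0,1)$: then there exist suitable weak solutions $(u_k, p_k)$ with forces $f_k \in M_{2s,\gamma}$ and radii $r_k \to 0$ for which $A(u_k; r_k) \leq M$ and $\delta_k := E(u_k, p_k; r_k) + c_\gamma(f_k) r_k^\beta \to 0$, yet $E(u_k, p_k; \theta r_k) > c_1 \theta^{\alpha_1} \delta_k$, with $c_1$ to be chosen at the end.

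Apply Lemma \ref{3.1} with the natural choices $R = r_k$, $M_k = (u_k)_{B_{r_k}}$, $\tilde{p}_k = (p_k)_{B_{r_k}}$, and $L_k = \delta_k$, producing rescaled suitable weak solutions $(v_k, q_k)$ with force $g_k(x) = f_k(r_k x)$ of the system \eqref{rNS5} on $B_1$. By construction $(v_k)_{B_1} = 0$ and $E(v_k, q_k; 1) \leq 1$, while $|r_k^{2s-1} M_k| = A(u_k; r_k) \leq M$. The nonlinear coefficient $r_k^{2s-1} L_k = r_k^{2s-1} \delta_k$ tends to $0$, and the rescaled force vanishes in the energy inequality because $c_\gamma(g_k) \leq c_\gamma(f_k) r_k^{\gamma - 2s}$, so that
\begin{align}
\frac{r_k^{2s}}{L_k}\|g_k\|_{L^2(B_1)} \lesssim \frac{r_k^\gamma c_\gamma(f_k)}{L_k} \leq r_k^{\gamma - \beta} \to 0,\notag
\end{align}
using $c_\gamma(f_k)r_k^\beta \leq \delta_k = L_k$ and $\beta < \gamma$. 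Lemma \ref{3.2} then yields $\int_{B_{3/4}^+} y^b |\oDelta_b v_k^*|^2\, dx\, dy \leq C$ uniformly in $k$, which together with Lemmas \ref{3.3}--\ref{3.4} translates into uniform fractional Sobolev control on the $v_k$.

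By the embedding of Lemma \ref{4.4} and Rellich compactness, we pass to a subsequence with $v_k \to v$ strongly in $L^3_{\text{loc}}(B_1)$, $q_k \rightharpoonup q$ weakly in $L^{3/2}(B_1)$, and $r_k^{2s-1} M_k \to \bar{M}$ with $|\bar{M}| \leq M$. The polynomial growth bound encoded in $E^{nl}(v_k; 1) \leq 1$ allows identification of $v$ on every dyadic shell, so the limit $(v, q)$ solves the linear, constant-coefficient fractional Stokes system
\begin{align}
(-\Delta)^s v + \bar{M} \cdot \nabla v + \nabla q = 0, \qquad \text{div}\, v = 0,\notag
\end{align}
on $B_1$, with $(v)_{B_1} = 0$ and $E(v, q; 1) \leq 1$ by lower semicontinuity. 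Lemma \ref{5.1}, which supplies the regularity theory for this limiting equation, yields a decay estimate of the form $E(v, q; \theta) \leq C(M)\theta^{\alpha_1}$. Taking $c_1 = 2C(M)$ and passing to the limit in $E(v_k, q_k; \theta) > c_1 \theta^{\alpha_1}$ produces the contradiction.

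The main obstacle lies in the limit passage for $E^P$ and $E^{nl}$. The pressure converges only weakly, so $E^P$ in the limit is obtained by weak lower semicontinuity, which is enough because the contradiction uses $E^P$ as an \emph{upper} bound on the limiting excess. The nonlocal excess $E^{nl}(v_k; \theta)$ involves a supremum over arbitrarily large radii $R \geq \theta/4$, which must be interchanged with the limit in $k$; this is handled by splitting the supremum at a large but fixed radius, using $E^{nl}(v_k; 1) \leq 1$ to dominate the tail uniformly, and reducing to strong convergence on finitely many compact shells. A subsidiary point is verifying that Lemma \ref{5.1}'s regularity theory applies to the constant-drift perturbation by $\bar{M}\cdot\nabla$ of the fractional Stokes operator, with constants depending only on $|\bar{M}| \leq M$.
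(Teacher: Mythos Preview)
Your overall strategy matches the paper's: contradiction, rescaling via Lemma~\ref{3.1}, uniform energy control from Lemma~\ref{3.2}, compactness, and linear regularity for the limit (Lemma~\ref{5.1}). The paper organizes the compactness step as a separate Lemma~\ref{compact}, and its treatment of $E^{nl}(v_k;\theta)$ via splitting the supremum at $R=1/4$ is essentially what you describe.

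There is, however, a genuine gap in your final contradiction step for the pressure. You write that weak convergence $q_k\rightharpoonup q$ in $L^{3/2}$ suffices because ``$E^P$ in the limit is obtained by weak lower semicontinuity, which is enough because the contradiction uses $E^P$ as an upper bound on the limiting excess.'' Lower semicontinuity does justify $E(v,q;1)\leq 1$, which is the hypothesis fed into Lemma~\ref{5.1}; but it runs the wrong way at the end. Lower semicontinuity gives $E^P(q;\theta)\leq\liminf_k E^P(q_k;\theta)$, whereas to contradict $E(v_k,q_k;\theta)>c_1\theta^{\alpha_1}$ you must show that $\limsup_k E^P(q_k;\theta)$ is controlled by $E^P(q;\theta)$. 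Weak convergence alone cannot deliver this.

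The paper closes this gap by upgrading to \emph{strong} convergence $q_k\to q$ in $L^{3/2}(B_{1/2})$. Taking the divergence of the rescaled system \eqref{rNS5} gives $-\Delta q_k = r_k^{2s-1}\eps_k\,\mathrm{div}\,\mathrm{div}(v_k\otimes v_k)$ (the drift and force terms are divergence-free), and since $r_k^{2s-1}\eps_k\to 0$ with $v_k$ bounded in $L^3$, Calder\'on--Zygmund estimates plus interior harmonic compactness (as in \cite{L}) yield strong $L^{3/2}$ convergence on $B_{1/2}$. With that in hand, the bound $E^P(q;\theta)\leq C\theta^{2s}$ from Lemma~\ref{5.1} transfers directly to $E^P(q_k;\theta)$ for large $k$, and the contradiction with \eqref{realcontra} follows.
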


We will prove the proposition by contradiction. First we will set up the argument. Let $\theta,M,\beta$ be chosen as specified, and suppose that the proposition is false. Then there is a sequence $\{(u_k,p_k)\}$ of steady suitable weak solutions to \eqref{NS5} with force $f_k$ and $r_k\rightarrow 0$ satisfying
\begin{subequations}
    \begin{align}
           E(u_k,p_k;r_k)+d_kr_k^\beta=\eps_k\rightarrow 0,\label{contr1}
           \\
           E(u_k,p_k;\theta r_k)\geq c_1\theta^{\alpha_1}\eps_k,\label{contra}
    \end{align}
\end{subequations}
    where $d_k=c_\gamma(f_k)$, and $c_1$ will be chosen to give a contradiction. By \eqref{contra}, $(u_k,q_k)$ does not have excess decay, and we will obtain a contradiction to this in three steps: First in Lemma \ref{compact} we rescale $(u_k,p_k)$ and $f_k$ to obtain a sequence $(v_k,q_k)$ which also does not have excess decay but converges to some $(v,q)$, second in Lemma \ref{5.1} we study the linear equation satisfied by $(v,q)$ and conclude that it does have excess decay, and third we complete the proof of Proposition \ref{6.1} by using the convergence $(v_k,q_k)\rightarrow(v,q)$ to see that $(v_k,q_k)$ does in fact have excess decay for $k$ sufficently large, which contradicts \eqref{contra}.

\begin{Lem}\label{compact}
	Let $\{(u_k,p_k)\}$ be a sequence of steady suitable weak solutions to \eqref{NS5} with force $f_k$ and $r_k\rightarrow 0$ satisfying \eqref{contr1}. Define
\begin{align}
        v_k(x)=\frac{u_k(r_kx)-(u_k)_{r_k}}{\eps_k},\quad q_k(x)=\frac{p_k(r_kx)-(p_k)_{r_k}}{\eps_k}r_k^{2s-1},\quad        g_k(x)=f_k(r_kx).\label{rescale}
\end{align}
Then there exists a subsequence (which we also denote $\{(v_k,q_k)\}$) which converges in the sense of distributions to a pair $(v,p)$ with $v\in L^2_{\text{loc}}(\mb{R}^5)$ and also
\begin{align}
	v_k&\rightarrow v\quad\text{in}\quad L^3(B_{\frac{1}{2}},\mb{R}^5),\quad q_k\rightarrow q\quad\text{in}\quad L^{\frac{3}{2}}(B_{\frac{1}{2}}).\notag
\end{align}
Furthermore $(v,q)$ satisfies for all $\varphi\in C_0^\infty(B_1)$,
\begin{align}
        \int_{\mb{R}^n}v(-\Delta)^s\varphi \ dx-M_0\cdot\int_{B_1} v\nabla\varphi\ dx -\int_{B_1}q\nabla\varphi\ dx =0,\label{lineq}
\end{align}
    along with $\text{div }v=0$ (weakly), $(v)_{1}=0$, $(q)_{1}=0$, and $E(v,q;1)\leq 1$.
\end{Lem}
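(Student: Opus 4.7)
The overall plan is a standard blow-up compactness argument, with the main obstacle being to obtain enough local compactness for $v_k$ and $q_k$ in this nonlocal setting for the linear PDE \eqref{lineq} to emerge cleanly in the limit. This compactness will come from combining the extension theory (Theorem \ref{Rays}) with the localized energy estimate of Lemma \ref{3.2}.

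\textbf{Setup and uniform bounds.} Since Proposition \ref{6.1} is proved by contradiction under the hypothesis $A(u_k;r_k)<M$, up to a subsequence $r_k^{2s-1}(u_k)_{r_k}\to M_0\in\mb{R}^5$ with $|M_0|\le M$; this $M_0$ is the one in \eqref{lineq}. The rescaling \eqref{rescale} is designed so that each piece of $E$ is covariant under $x\mapsto r_kx$, hence \eqref{contr1} gives $E(v_k,q_k;1)\le 1$, while $(v_k)_1=0$ and $(q_k)_1=0$ hold by construction. In particular $\|v_k\|_{L^3(B_1)}$, $\|q_k\|_{L^{3/2}(B_1)}$, and $\sup_{R\ge 1/4}R^{-3s}\mean{B_R}|v_k|^2\,dx$ are uniformly bounded. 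By Lemma \ref{3.1} the pair $(v_k,q_k)$ solves \eqref{rNS5} weakly with force $g_k$ and satisfies the energy inequality \eqref{rNS5en}. The rescaled force is small: $f_k\in M_{2s,\gamma}$ implies $\|g_k\|_{L^2(B_1)}\le Cd_kr_k^{\gamma-2s}$, so using $d_kr_k^{\beta}\le\varepsilon_k$ and $\beta<\gamma$,
\[
\tfrac{r_k^{2s}}{\varepsilon_k}\|g_k\|_{L^2(B_1)}\le C\tfrac{d_kr_k^{\gamma}}{\varepsilon_k}\le Cr_k^{\gamma-\beta}\to 0.
\]

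\textbf{Compactness.} Applying the rescaled energy inequality of Lemma \ref{3.2} to $(v_k,q_k)$, each term on the right is uniformly bounded: the cubic term has small prefactor $r_k^{2s-1}\varepsilon_k$, the pressure--velocity term is bounded by $\|q_k\|_{L^{3/2}}\|v_k\|_{L^3}$, the drift by $M\|v_k\|_{L^2}^2$, the extension term $\int y^b|v_k^*|^2$ by Lemma \ref{3.4} using $\|v_k\|_{L^2(B_2)}$ and the nonlocal growth bound, and the force term by the display above. Hence $\oDelta_bv_k^*$ is uniformly bounded in $L^2(y^b,B_{3/4}^+)$; Lemma \ref{3.3} then provides uniform bounds on $\onabla v_k^*$ in $L^2(y^b,B_r^+)$ for each $r<3/4$. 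Through Theorem \ref{Rays} this gives a uniform $H^s(B_{1/2})$ bound on $v_k$, and Rellich compactness (since $H^s\hookrightarrow L^3$ compactly for $n=5,\ s>1$) extracts a subsequence with $v_k\to v$ in $L^3(B_{1/2};\mb{R}^5)$. For the pressure, taking the divergence of \eqref{rNS5} and using $\operatorname{div}v_k=0$ (which annihilates both the $(-\Delta)^sv_k$ term and the linear drift) yields
\[
\Delta q_k=-r_k^{2s-1}\varepsilon_k\,\partial_i\partial_j(v_{k,i}v_{k,j})+\tfrac{r_k^{2s}}{\varepsilon_k}\operatorname{div}g_k.
\]
Local Calder\'on--Zygmund estimates applied to a cutoff of $q_k$ against this equation, combined with the strong $L^3$ convergence of $v_k$ (so $v_k\otimes v_k$ converges strongly in $L^{3/2}$) and the smallness of the forcing, upgrade the weak $L^{3/2}$ convergence of $q_k$ to strong convergence in $L^{3/2}(B_{1/2})$.

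\textbf{Passage to the limit.} I test the weak form of \eqref{rNS5} against $\varphi\in C_0^\infty(B_1)$. The nonlocal term $\int v_k(-\Delta)^s\varphi\,dx$ converges to $\int v(-\Delta)^s\varphi\,dx$: on $B_1$ by weak $L^3$ convergence, and on $\mb{R}^5\setminus B_1$ the slowly decaying tails of $(-\Delta)^s\varphi$ are controlled by the uniform nonlocal growth bound on $v_k$. The drift $r_k^{2s-1}(u_k)_{r_k}\cdot\int v_k\nabla\varphi\,dx$ converges to $M_0\cdot\int v\nabla\varphi\,dx$; the nonlinear term vanishes by its small prefactor and strong $L^3$ convergence; the pressure term converges by strong $L^{3/2}$ convergence; the force vanishes. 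This produces \eqref{lineq}. Finally $\operatorname{div}v=0$, $(v)_1=0$, $(q)_1=0$ pass under the strong convergences, and $E(v,q;1)\le 1$ follows by lower semicontinuity of each piece of $E$.
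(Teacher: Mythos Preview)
Your proof is correct and follows essentially the same route as the paper's: the uniform bound $E(v_k,q_k;1)\le 1$ from \eqref{contr1}, the energy estimate of Lemma \ref{3.2} together with Lemma \ref{3.4} to bound $\oDelta_b v_k^*$ in $L^2(y^b,B_{3/4}^+)$, the passage to a local $\dot H^s$ bound via the minimality in Theorem \ref{Rays} (the paper phrases this as ``using the proof of Lemma \ref{4.4}''), Rellich for strong $L^3$ compactness of $v_k$, Calder\'on--Zygmund for $q_k$, and the same term-by-term limit in the weak form of \eqref{rNS5}. Your pressure step is slightly more explicit than the paper's (which simply cites \cite{L}), and note that since $f$ is divergence-free the term $\operatorname{div}g_k$ in your display actually vanishes, but treating it as small is harmless.
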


\begin{proof}
With the rescaling \eqref{rescale}, changing variables we compute
    \begin{align}
        (-\Delta)^{2s}u_k(r_kx)=\frac{\eps_k}{r_k^{2s}}(-\Delta)^{2s} v_k(x),\quad \nabla u_k(r_k x)=\frac{\eps_k}{r_k}\nabla v_k(x),\quad \nabla p_k(r_kx)=\frac{\eps_k}{r_k^{2s-1}}\nabla q_k(x).\notag
    \end{align}
    In addition we have
    \begin{align}
        \frac{1}{\eps_k}E^V(u_k;\theta r_k)&=E^V(v_k;\theta)=\left(\mean{B_\theta}|v_k-(v_k)_{\theta}|^3\ dx\right)^{\frac{1}{3}}\notag
        \\
        \frac{1}{\eps_k}E^P(p_k;\theta r_k)&=E^P(q_k;\theta)=\theta^{2s-1}\left(\mean{B_\theta}|p-(p)_{\theta}|^{\frac{3}{2}}\ dx\right)^{\frac{2}{3}}\notag
        \\
        \frac{1}{\eps_k}E^{nl}(u_k;\theta r_k)&=E^{nl}(v_k;\theta)=\left(\sup_{R\geq\frac{1}{4}\theta}\left(\frac{\theta}{R}\right)^{3s}\mean{B_R}|u-(u)_{\theta}|^2\ dx\right)^{\frac{1}{2}},\notag
    \end{align}
    so that 
	\begin{align}
		E(v_k,q_k;1)\leq 1\label{lpbound}
	\end{align}
We may therefore assume after passing to subsequences that there exists $(v,q)$ such that
    \begin{align}
        v_k\rightharpoonup v\quad\text{in}\quad L^3(B_1,\mb{R}^5),\quad q_k\rightharpoonup q\quad\text{in}\quad L^{\frac{3}{2}}(B_1,\mb{R}^5),\quad (v)_1=0,\quad (q)_1=0.\notag
    \end{align}

Now we discuss the compactness and strong convergence of $\{v_k\}$ and $\{q_k\}$. We wish to make use of the energy inequality \eqref{rescaledenineq} that we obtained in Lemma \ref{3.2}. Indeed the rescalings \eqref{rescale} are exactly of the form in \eqref{rdef} with $M=(u_k)_1$, $\tilde{p}=(p_k)_1$, and $L=\epsilon_k$. By \eqref{rescale} we have that $E^V(v_k;1)$, $E^{nl}(v_k;1)$, and $E^P(q_k;1)$ are bounded so that the following quantities are all bounded:
    \begin{align}
        \int_{B_1} |v_k|^2\ dx,\quad\sup_{R\geq 1} R^{-3s}\left(\mean{B_R}|u|\ dx\right)^2,\quad\int_{B_1} |v_k||q_k|\ dx.\notag
    \end{align}
Furthermore, the last term on the right-hand side of the energy inequality \eqref{rescaledenineq},
    \begin{align}
        \frac{r_k^{2s}}{\eps_k}\int_{B_1}|v_k||g_k|\ dx\leq\frac{r_k^{2s}}{\eps_k}\left(\mean{B_1}|g_k|\ dx^2\right)^{\frac{1}{2}}\left(\mean{B_1}|v_k|^2\ dx\right)^{\frac{1}{2}}\leq r_k^{\gamma-\beta}\left(\mean{B_1}|v_k|^2\ dx\right)^{\frac{1}{2}},\notag
    \end{align}
is also bounded, since $\gamma>\beta$. As a result, \eqref{rescaledenineq} combined with Lemma \ref{3.4} shows that independently of $k$,
    \begin{align}
        \int_{B_{\frac{3}{4}}^+} y^b|\oDelta_b v_k^*|^2\ dx\ dy\leq C.\notag
    \end{align}
Thus, using the proof of Lemma \ref{4.4} and the same test function $\varphi$ we find that $\|v_k\varphi\|_{\dot{H}^s(\mb{R}^5)}$ is bounded. Therefore by the fractional Rellich-Kondrachov theorem, since $\frac{5}{5-2s}>3$ when $s>1$ we can extract a subsequence of $\{v_k\}$ that converges strongly in $L^3(B_{1/2},\mb{R}^5)$, and the strong convergence of $q_k$ in $L^{\frac{3}{2}}(B_{1/2})$ is obtained by the Calder\'{o}n-Zygmund estimates (see \cite{L}). For the convergence in $\mb{R}^5$ in the sense of tempered distributions we use the fact that the nonlocal excess $E^{nl}(v_k;1)$ controls the $L^2(B_r)$ norm for any $r>0$, and for $p_k$ we use the fact that $\Delta q_k=\text{div}~\text{div}(v_k\otimes v_k)$.

By now we have shown the strong and weak convergence properties of $\{(v_k,q_k)\}$. Also by the lower semicontinuity of $E$ it follows from \eqref{lpbound} that $E(v,q;1)\leq 1$. Therefore it only remains to show that $(v,q)$ satisfies \eqref{lineq}.
    From the rescaling \eqref{rescale} we have that $(v_k,p_k)$ and $g_k$ satisfy, for all $\varphi\in C_0^\infty(B_1)$,
    \begin{align}
        &\int_{\mb{R}^n} v_k(-\Delta)^s\varphi\ dx-r_k^{2s-1}\eps_k\int_{B_1}(v_k\nabla\varphi) v_k\ dx-r_k^{2s-1}(u_k)_{r_k}\cdot\int_{B_1}v_k\nabla\varphi\ dx-\int_{B_1}q_k\nabla\varphi\ dx\notag
	\\
	&\quad=\frac{{r_k}^{2s}}{\eps_k}\int_{B_1}g_k\varphi\ dx.\notag
    \end{align}
Now observe that by hypothesis $\left|r_k^{2s-1}(u_k)_{r_k}\right|<M$ so we may assume $r_k^{2s-1}(u_k)_{r_k}\rightarrow M_0\in\mb{R}^n$ with $|M_0|\leq M$. Furthermore we may estimate the term involving $g_k$ by
    \begin{align}
        \frac{r_k^{2s}}{\eps_k}\left(\mean{B_1}|g_k|^2\ dx\right)^{\frac{1}{2}}&=\frac{r_k^{2s}}{\eps_k}\left(\mean{B_{R_k}}|f_k|^2\ dx\right)^{\frac{1}{2}}\leq \frac{d_k r_k^{\gamma-2s}}{\eps_k} r_k^{2s}=\frac{d_k r_k^{\beta}}{\eps_k} r_k^{\gamma-\beta}\leq r_k^{\gamma-\beta}\rightarrow 0.\notag
    \end{align}
    
    Therefore we find as desired that $v$ and $q$ satisfy for any $\varphi\in C_0^\infty(B_1)$,
    \begin{align}
        \int_{\mb{R}^n}v(-\Delta)^s\varphi\ dx-M_0\cdot\int_{B_1} v\nabla\varphi\ dx -\int_{B_1}q\nabla\varphi\ dx=0,\notag
    \end{align}
    along with $\text{div }v=0$ (weakly).
\end{proof}

\begin{Lem}\label{5.1}
    Let $s\in(1,2)$ and $M_0\in\mb{R}^5$ with $|M_0|\leq M$. Then there exists a $C>0$ depending on $M$ and $s$ such that if $v\in L^2_{\text{loc}}(\mb{R}^5)$ and $q\in L^{3/2}(B_1)$ satisfies \eqref{lineq}---namely that for all $\varphi\in C_0^\infty(B_1)$,
    \begin{align}
        \int_{\mb{R}^n}v(-\Delta)^s\varphi \ dx-M_0\cdot\int_{B_1} v\nabla\varphi\ dx -\int_{B_1}q\nabla\varphi\ dx =0,\notag
    \end{align}
    along with $\text{div }v=0$ (weakly), $(v)_{1}=0$, $(q)_{1}=0$, and $E^{nl}(v;1)<\infty$, then
    \begin{align}
        [v]_{C^1(B_{1/2})}+[q]_{C^1(B_{1/2})}\leq C\left(\int_{B_1}|v|^3\ dx\right)^{\frac{2}{3}}+C\left(\int_{B_1}|q|^{\frac{3}{2}}\ dx\right)^{\frac{4}{3}}+C\left(\sup_{R\geq\frac{1}{4}} R^{-3s}\mean{B_R}|v|^2\ dx\right)^{\frac{1}{2}}.
    \end{align}
\end{Lem}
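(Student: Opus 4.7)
The system
\begin{align}
(-\Delta)^s v + M_0\cdot\nabla v + \nabla q = 0,\qquad \text{div } v = 0\notag
\end{align}
weakly in $B_1$ is linear with constant coefficients, so the plan is to extract full interior smoothness from ellipticity and then record a quantitative $C^1$ bound. First I would test the weak equation with $\varphi = \nabla\psi$ for $\psi \in C_0^\infty(B_1)$: the first two terms vanish after integration by parts using $\text{div } v = 0$ (since $(-\Delta)^s$ and $M_0\cdot\nabla$ commute with gradients, they send gradients back to gradients testing against a divergence-free field), leaving $\int q\,\Delta\psi\,dx = 0$. Hence $\Delta q = 0$ in $B_1$, and the classical interior gradient bound for harmonic functions gives $[q]_{C^1(B_{3/4})} \le C\|q\|_{L^{3/2}(B_1)}$, which supplies the $q$-contribution to the desired inequality (the exponent $4/3$ leaves slack relative to a single factor of $\|q\|_{L^{3/2}}$).

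Next I would handle the velocity by splitting off the nonlocal tail. Choose $\chi \in C_0^\infty(B_1)$ with $\chi \equiv 1$ on $B_{3/4}$ and decompose $v = \chi v + (1-\chi)v$. For $x \in B_{1/2}$ the quantity $(-\Delta)^s((1-\chi)v)(x)$ is a genuine integral (no principal value is needed, because $(1-\chi)v$ vanishes in a neighborhood of $x$), and decomposing the integration region into dyadic annuli $B_{2^{k+1}}\setminus B_{2^k}$ and invoking the definition of $E^{nl}(v;1)$ controls this quantity and all its $x$-derivatives uniformly on $B_{1/2}$ by a constant times the nonlocal norm. Absorbing this smooth tail, on $B_{1/2}$ we are left with the elliptic equation
\begin{align}
(-\Delta)^s v + M_0\cdot\nabla v = -\nabla q + F,\notag
\end{align}
with $F \in C^\infty(B_{1/2})$ bounded together with its derivatives by $E^{nl}(v;1)$, and $-\nabla q$ already smooth and controlled by $\|q\|_{L^{3/2}(B_1)}$.

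Finally I would bootstrap interior regularity for this fractional drift equation. Since $(-\Delta)^s$ has order $2s > 2$ and $M_0\cdot\nabla$ is order $1$, the drift is a subordinate lower-order perturbation which can be absorbed perturbatively on a small enough ball. Using Yang's higher-order extension $v^*$ from Theorem \ref{Rays}, the problem converts into a fourth-order degenerate elliptic problem $\oDelta_b^2 v^* = 0$ on $\mathbb{R}^{n+1}_+$ whose boundary data are now controlled in smooth norms, and Schauder-type interior estimates (for instance, applied iteratively on slightly shrinking balls) propagate the initial $L^3$ information on $v$ and $L^{3/2}$ on $q$ up to $C^1$ bounds on $B_{1/2}$. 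Collecting the three contributions---the harmonic pressure estimate, the nonlocal tail term, and the elliptic interior estimate, with exponents adjusted via Hölder's and Young's inequalities to match the stated squared powers on $\|v\|_{L^3(B_1)}$ and $\|q\|_{L^{3/2}(B_1)}$---yields the lemma. The main obstacle is quantitatively propagating the drift perturbation through the bootstrap while keeping constants depending only on $M$ and $s$; the smallness of the ball at each bootstrap step is what absorbs the drift into the principal part and keeps the constants under control.
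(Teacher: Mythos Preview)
Your observation that $\Delta q = 0$ in $B_1$ is correct and cleaner than what the paper does: testing with $\varphi=\nabla\psi$, the two velocity terms vanish by $\operatorname{div} v=0$ (the nonlocal one after an approximation using the tail control $E^{nl}(v;1)<\infty$ to justify integrating by parts against $(-\Delta)^s\psi$, which is not compactly supported). The paper instead takes an auxiliary $\hat q$ solving $\Delta\hat q=\operatorname{div}(M_0\cdot\nabla(\psi v))$ and compares $\hat q$ to $q$ via Calder\'on--Zygmund plus harmonicity of the difference on a smaller ball; your route bypasses this entirely and gives the $q$--estimate in one stroke from interior harmonic estimates. One caveat: the harmonic bound is linear in $\|q\|_{L^{3/2}}$, not quadratic as written in the lemma, so you do not literally reproduce the stated exponent $4/3$; since the lemma is only ever applied with $E(v,q;1)\le 1$ this does not matter for the downstream argument, but you should flag it.

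Where your sketch becomes shaky is the velocity step. The paper does \emph{not} invoke boundary Schauder theory for the degenerate fourth--order extension problem; it runs a concrete energy bootstrap. One multiplies the equation by $v\varphi^2$ and, via the extension identity, bounds $\int_{B_{3/4}^+} y^b|\oDelta_b v^*|^2$ by $\|q\|_{L^{3/2}}\|v\|_{L^3}+M\|v\|_{L^2}^2+\int y^b|v^*|^2$; Lemma~\ref{3.4} handles the last term via the tail. This controls $\|v\|_{H^s}$ locally, hence $\nabla v\in L^2$. Then one \emph{differentiates the linear equation}, repeats the same energy step for $\partial_i v$, and iterates on shrinking balls up to $H^4\hookrightarrow C^1$ by Morrey. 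Your description---``the problem converts into $\oDelta_b^2 v^*=0$ whose boundary data are now controlled in smooth norms''---is circular as written: the Dirichlet boundary datum \emph{is} $v$, exactly the function whose smoothness is in question, and the PDE only enters as the Neumann--type condition $c_{n,s}\lim y^b\partial_y\oDelta_b v^*=-M_0\cdot\nabla v-\nabla q+F$, which still contains $\nabla v$. If you want to run your route, you must either (i) cite a quantitative interior regularity theorem for the constant--coefficient fractional drift operator $(-\Delta)^s+M_0\cdot\nabla$ with tail control, tracking the $M$--dependence of the constant, or (ii) actually carry out the energy bootstrap above. Either is fine, but the sentence about the extension as currently phrased is not an argument.
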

\begin{proof}
    The system is linear with constant coefficients, so we can regularize and assume $v,q\in C^\infty$. First we multiply the equation by $v\varphi_1$ for $\varphi_1(x)=\Phi_1(x,0)$, where $\Phi_1(x,y)$ is a cutoff function between $B_{7/8}^+$ and $B_1^+$, so that
    \begin{align}
        0=\int_{B_1}\left(q\nabla\varphi_1\cdot v\varphi_1+M_0\cdot\nabla\varphi_1|v|^2\varphi_1-(-\Delta)^s v\cdot v\varphi_1^2\right)\ dx.\notag
    \end{align}
    Then we proceed as in the proof of Lemma \ref{3.2} to find that
    \begin{align}
        \int_{B_{3/4}^+}y^b|\oDelta_b v^*|^2\ dx\ dy&\leq C\left(\int_{B_1}|q|^{3/2}\ dx\right)^{2/3}\left(\int_{B_1}|v|^3\  dx\right)^{1/3}+CM\int_{B_1}|v|^2\ dx\notag
	\\
	&\quad+C\int_{B_1^+}y^b|v^*|^2\ dx\ dy.\notag
    \end{align}
    Now taking another cutoff function $\Phi_{3/4}$ between $B_{1/2}^+$ and $B_{3/4}^+$ and seek to control $\nabla u$ (we denote $\varphi_{3/4}=\Phi_{3/4}|_{y=0}$). First we have
    \begin{align}
        \int_{B_{3/4}}|\nabla v|^2\ dx&\leq\int_{\mb{R}^5}|\nabla(v\varphi_{3/4})|^2\ dx\leq C\int_{\mb{R}^5}|(-\Delta)^{s/2}(v\varphi_{3/4})|^2\ dx\leq C\int_{\mb{R}^6_+}y^b|\oDelta_b(v\varphi_{3/4})^*|^2\ dx\ dy,\notag
	\end{align}
	By Theorem \ref{Rays} the rightmost term is bounded by $C\int_{\mb{R}^6_+} y^b|\oDelta_b (v^*\Phi_{3/4})|^2\ dx\ dy$, and then by Lemma \ref{3.3} we have
	\begin{align}
        C\int_{\mb{R}^6_+}y^b|\oDelta_b(v^*\Phi_{3/4})^*|^2\ dx\ dy&\leq C\int_{\mb{R}^6_+}y^b\left(|\oDelta_b v^*|^2\Phi_{3/4}^2+|\onabla v^*|^2|\onabla\Phi_{3/4}|^2+|v^*|^2|\oDelta_b\Phi_{3/4}|^2\right)\ dx\ dy\notag
        \\
        &\leq C\int_{B_1^+}y^b|\oDelta_b v^*|^2\varphi_{3/4}^2\ dx\ dy+C\int_{B_1^+}y^b|v^*|^2\ dx\ dy.\notag
    \end{align}
    Next we establish higher order control of the pressure. Let $\psi\in C_0^\infty(B_{3/4})$ be a cutoff function which is identically $1$ on $B_{23/32}$, and let $\hat{q}$ solve $\Delta\hat{q}=\text{div }(M_0\cdot\nabla(\psi v))$. Here we are estimating on a sequence of concentric balls with decreasing radii, which as we will see below are taken as $3/4$, $23/32$, $11/16$, $5/8$, $9/16$. The explicit choice of radii, however, is not important, though the choice d affect the size of the constants $C$ in the estimates throughout.

By the Calder\'{o}n-Zygmund estimates we have
    \begin{align}
        \|\nabla\hat{q}\|_{L^2(B_{11/16})}&\leq\|M_0\cdot\nabla(\psi v)\|_{L^2(B_{11/16})}\leq C\|\nabla v\|_{L^2(B_{3/4})}+C\|v\|_{L^2(B_{3/4})}\notag
        \intertext{and since $\hat{q}-q$ is harmonic,}
        \|\nabla(\hat{q}-q)\|_{L^2(B_{11/16})}&\leq \|\hat{q}-q\|_{L^{3/2}(B_{23/32})}\leq C\left(\|q\|_{L^{3/2}(B_{3/4})}+\|v\|_{L^2(B_{3/4})}\right).\notag
    \end{align}
    Therefore,
    \begin{align}
        \|\nabla q\|_{L^2(B_{11/16})}\leq C\|\nabla v\|_{L^2(B_{3/4})}+C\|v\|_{L^2(B_1)}+C\|q\|_{L^{3/2}(B_1)}.\notag
    \end{align}
    Now we have controlled the $L^2$ norms of both $\nabla u$ and $\nabla p$. To control higher order derivatives, by the linearity of \eqref{lineq} we can differentiate by applying $\partial_i$ to obtain
    \begin{align}
        \int_{\mb{R}^n}\partial_i v(-\Delta)^s\varphi \ dx-M_0\cdot\int_{B_1} \partial_iv\cdot\nabla\varphi\ dx -\int_{B_1}\partial_i q\nabla\varphi\ dx =0.\notag
    \end{align}
    Take a cutoff function $\varphi_{11/16}$ between $B_{5/8}$ and $B_{11/16}$. Multiplying $\varphi_{11/16}$ against $\partial_i u$ and applying the same argument as before yields
    \begin{align}
        \int_{B_{5/8}^+}y^b|\oDelta_b \partial_i v^*|^2\Phi_{11/16}\ dx\ dy&\leq C\int_{B_{11/16}}\partial_iq\nabla\varphi_{11/16}\cdot\partial_i v\varphi_{11/16}\ dx+C\int_{B_{11/16}}|\partial_i v|^2\varphi_{11/16}\ dx\notag
	\\
	&\quad+C\int_{B^+_{11/16}}y^b|\partial_i v^*|^2\ dx\ dy.\notag
    \end{align}
    Here in analogy to the previous notation $\Phi_{11/16}(x,0)=\varphi_{11/16}(x)$ and $\Phi_{11/16}$ is a cutoff function between $B_{5/8}^+$ and $B_{11/16}^+$. The first term on the right is bounded by $C\|\nabla q\|_{L^2(B_{11/16})}^2+C\|\nabla v\|_{L^2(B_{11/16})}^2$, so we may conclude
    \begin{align}
        \int_{B^+_{5/8}}y^b|\oDelta_b\partial_i v^*|^2\leq C\|v\|_{L^3(B_1)}^2+C\|q\|^2_{L^{3/2}(B_1)}+C\int_{B_1^+}y^b|v^*|^2\ dx\ dy.\notag
    \end{align}
    Iterating these estimates $k$ times (on an appropriately chosen decreasing sequence of radii) then shows
    \begin{align}
        \|v\|_{H^k(B_{9/16})}\leq C\|v\|^2_{L^3(B_1)}+C\|q\|^2_{L^{3/2}(B_1)}+C\int_{B_1^+}y^b|v^*|^2\ dx\ dy,\notag
        \intertext{and for the pressure,}
        \|q\|_{H^k(B_{9/16})}\leq C\|v\|_{L^3(B_1)}+C\|q\|_{L^{3/2}(B_1)}+C\int_{B_1^+}y^b|v^*|^2\ dx\ dy.\notag
    \end{align}
    From Morrey's embedding theorem when $k=4$ combined with Lemma \ref{3.4} which helps bound the right-hand sides above we obtain the desired conclusion, since
    \begin{align}
        \|v\|_{C^1(B_{9/16})}\leq C\|v\|_{H^4(B_{9/16})},\quad \|q\|_{C^1(B_{9/16})}\leq C\|q\|_{H^4(B_{9/16})}.\notag
    \end{align}
\end{proof}

\begin{proof}[Proof of Proposition \ref{6.1}]

Recall that we start with a sequence $\{(u_k,p_k)\}$ of steady suitable weak solutions to \eqref{NS5} with force $f_k$ and $r_k\rightarrow 0$ satisfying \eqref{contr1}--\eqref{contra}, and aim to produce a contradiction with \eqref{contra} in order to prove the Proposition. In the proof of Lemma \ref{compact} we rescaled $(u_k,p_k)$ and $f_k$ to $(v_k,q_k)$ and $g_k$ in \eqref{rescale}. As a result \eqref{contr1} implies $E(v_k,q_k;1)\leq 1$, which is exactly the inequality \eqref{lpbound} that we used earlier, and \eqref{contra}, which we have not used until now, becomes equivalent to
\begin{align}
	E(v_k,q_k;\theta)\geq c_1\theta^{\alpha_1}.\label{realcontra}
\end{align}
Therefore to prove the proposition it now suffices to produce a contradiction with \eqref{realcontra}. We will show that for sufficiently large $k$,
\begin{align}
	E^V(v_k;\theta)\leq C\theta,\quad E^P(q_k;\theta)\leq C\theta^{2s},\quad E^{nl}(v_k;\theta)\leq C(\theta+\theta^{\frac{3s}{2}}).\label{willshow}
\end{align}
This will give the contradiction; from \eqref{realcontra} we would see that $c_1\theta^{\alpha_1}\leq C(\theta+\theta^{2s}+\theta+\theta^{\frac{3s}{2}})$, which cannot be true for a $c_1>0$ chosen sufficiently large.

First we look at $E^V$ and $E^P$. From Lemma \ref{compact} we know that $(v_k,q_k)$ converges to a pair $(v,q)$ which satisfies the equation \eqref{lineq}. Lemma \ref{5.1} gives estimates for such a solution $(v,q)$ and by the bound \eqref{lpbound} we therefore find that
\begin{align}
	E^V(v;\theta)\leq C\theta,\quad E^P(q;\theta)\leq C\theta^{2s}.\notag
\end{align}
Furthermore, because of the strong convergence $v_k\rightarrow v$ in $L^3(B_{\frac{1}{2}})$ and $q_k\rightarrow q$ in $L^{\frac{3}{2}}(B_{\frac{1}{2}})$ it follows that for $k$ sufficiently large,
\begin{align}
	E^V(v_k;\theta)\leq C\theta,\quad E^P(q_k;\theta)\leq C\theta^{2s}.\label{willshow1}
\end{align}
It remains to look at $E^{nl}$. We estimate
    \begin{align}
        E^{nl}(v_k;\theta)&=\left(\sup_{R\geq\frac{1}{4}\theta}\left(\frac{\theta}{R}\right)^{3s}\mean{B_R}|v_k-(v_k)_\theta|^2\ dx\right)^{\frac{1}{2}}\notag
        \\
        &\leq \left(\sup_{\frac{1}{4}\theta\leq R<\frac{1}{4}}\left(\frac{\theta}{R}\right)^{3s}\mean{B_R}|v_k-(v_k)_\theta|^2\ dx+\sup_{R\geq\frac{1}{4}}\left(\frac{\theta}{R}\right)^{3s}\mean{B_R}|v_k-(v_k)_\theta|^2\ dx\right)^{\frac{1}{2}}\notag
    \end{align}
    The second term on the right can be bounded as follows:
    \begin{align}
        \left(\sup_{R\geq\frac{1}{4}}\left(\frac{\theta}{R}\right)^{3s}\mean{B_R}|v_k-(v_k)_\theta|^2\ dx\right)^{\frac{1}{2}}&\leq\theta^{\frac{3s}{2}}E(v_k,q_k;1)+(4\theta)^{\frac{3s}{2}}|(v_k)_{\theta}-(v_k)_{1}|\notag
        \\
        &\leq \theta^{\frac{3s}{2}}+(4\theta)^{\frac{3s}{2}}\left(|(v_k)_{\theta}-(v_k)_{{1/2}}|+|(v_k)_{{1/2}}-(v_k)_{1}|\right)\notag
    \end{align}
    Noting that $|(v_k)_{B_{1/2}}-(v_k)_{B_1}|\leq C E^V(v_k;1)\leq C$ then leads to the bound on $E^{nl}$,
    \begin{align}
        E^{nl}(v_k;\theta)\leq \left(\sup_{\frac{1}{4}\theta\leq R<\frac{1}{4}}\left(\frac{\theta}{R}\right)^{3s}\mean{B_R}|v_k-(v_k)_\theta|^2\ dx\right)^{\frac{1}{2}}+\theta^{\frac{3s}{2}}+C(4\theta)^{\frac{3s}{2}}\left(1+|(v_k)_{\theta}-(v_k)_{{1/2}}|\right).\notag
    \end{align}
    Because of the bounds of \eqref{lpbound}, the estimates of Lemma \ref{5.1} imply $|(v)_{\theta}-(v)_{{1/2}}|\leq C$, and additionally for every $x\in B_R$ with $R\leq\frac{1}{4}$, $|v-(v)_{\theta}|\leq C R$. Hence
    \begin{align}
        \left(\sup_{\frac{1}{4}\theta\leq R<\frac{1}{4}}\left(\frac{\theta}{R}\right)^{3s}\mean{B_R}|v-(v)_\theta|^2\ dx\right)^{\frac{1}{2}}\leq \left(\sup_{\frac{1}{4}\theta\leq R<\frac{1}{4}}\left(\frac{\theta}{R}\right)^{3s}R^2\right)^{\frac{1}{2}}\leq C\theta.\notag
    \end{align}
    We can now apply the strong convergence $v_k\rightarrow v$ in $L^3(B_{\frac{1}{2}})$, $q_k\rightarrow q$ in $L^{3/2}(B_{\frac{1}{2}})$ to see that
    \begin{align}
        E^{nl}(v_k;\theta)\leq C(\theta+\theta^{\frac{3s}{2}}).\label{willshow2}
    \end{align}
    Thus \eqref{willshow1} and \eqref{willshow2} combine to give \eqref{willshow} and complete the proof.
\end{proof}

\subsection{Proof of Theorem \ref{main}}

Starting from Proposition \ref{6.1} we now proceed to prove Theorem \ref{main}. As described earlier we first iterate Proposition \ref{6.1} and then relate the smallness hypothesis on the excess $E$ to the smallness hypothesis of Theorem \ref{main} on the related energy $\ol{E}$.

\begin{Lem}[Iteration of Proposition \ref{6.1}]\label{2.5}
    Let $\theta, M,\beta,\beta_1$ be taken so that
    \begin{align}
        M\geq 3,\quad 0<\beta_1\leq\beta<\gamma,\quad 0<\beta_1<\alpha_1,\notag
        \\
        0<\theta\leq\frac{1}{2},\quad c_1(M)\theta^{\frac{\alpha_1-\beta_1}{2}}\leq 1.\notag
    \end{align}
    There exists $\ol{\eps_1}$ such that if $(u,p)$ are steady suitable weak solutions of \eqref{NS5} with force $f\in M_{2s,\gamma}$ and
    \begin{align}
        \begin{cases}
            A(u;R)=r^{2s-1}|(u)_{r}|<\frac{M}{2}
            \\
            E(u,p;r)+c_\gamma(f) r^\beta<\ol{\eps_1}
        \end{cases}\notag
    \end{align}
    for $r<R_1$, with $\eps_1$ and $R_1$ as in Proposition \ref{6.1}, then following inequalities hold for $k=0,1,2,\ldots$:
    \begin{align}
        \begin{cases}
            A(u;\theta^k r)=(\theta^k r)^{2s-1}|(u)_{{\theta^{k} r}}|<M
            \\
            E(u,p;\theta^k r)+c_\gamma(f)(\theta^k r)^\beta<\eps_1
            \\
            E(u,p;\theta^{k+1}r)\leq\theta^{(k+1)\beta_1}(1-\theta^{\frac{\alpha_1-\beta_1}{2}})^{-1}(E(u,p;r)+c_\gamma(f) r^\beta).
        \end{cases}
    \end{align}
\end{Lem}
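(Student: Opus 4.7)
The plan is to prove the lemma by coupled induction on $k$, simultaneously establishing all three bounds at each scale $\theta^k r$. Write $\Phi_0 = E(u,p;r) + c_\gamma(f) r^\beta$ and $K = (1-\theta^{(\alpha_1-\beta_1)/2})^{-1}$. The base case $k=0$ is immediate: $A(u;r) < M/2 < M$ and $\Phi_0 < \ol{\eps_1} \leq \eps_1$ permit a direct application of Proposition \ref{6.1} at scale $r$, and the parameter hypothesis $c_1\theta^{(\alpha_1-\beta_1)/2} \leq 1$ rewrites $c_1\theta^{\alpha_1} \leq \theta^{\beta_1}\theta^{(\alpha_1-\beta_1)/2} \leq \theta^{\beta_1} K$, delivering the third bound at $k=0$.

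For the inductive step, the key is to verify the two hypotheses of Proposition \ref{6.1} at scale $\theta^{k+1}r$. Combining the inductive decay estimate with $c_\gamma(f)(\theta^{k+1}r)^\beta \leq \theta^{(k+1)\beta_1} c_\gamma(f) r^\beta$ (using $\beta \geq \beta_1$) gives the excess smallness $E(u,p;\theta^{k+1}r) + c_\gamma(f)(\theta^{k+1}r)^\beta \leq \theta^{(k+1)\beta_1}(K+1)\Phi_0$, which is bounded by $\eps_1$ after shrinking $\ol{\eps_1}$. The average bound $A(u;\theta^{k+1}r) < M$ is the more delicate part. Using $|(u)_{\theta^{j+1}r} - (u)_{\theta^j r}| \leq \theta^{-5} E^V(u;\theta^j r) \leq \theta^{-5} E(u,p;\theta^j r)$, telescoping and summing the inductive decay yields $|(u)_{\theta^{k+1}r}| \leq |(u)_r| + \theta^{-5} K(1-\theta^{\beta_1})^{-1}\Phi_0$. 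Multiplying by $(\theta^{k+1}r)^{2s-1} \leq r^{2s-1} < R_1^{2s-1}$ (taking $R_1 \leq 1$), the first resulting term stays below $M/2$ since $A(u;r) < M/2$ and $2s-1 > 0$, while the second is made less than $M/2$ by choosing $\ol{\eps_1}$ small depending on $M,\theta,\beta_1,\alpha_1,R_1,s$.

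With both smallness conditions verified at $\theta^{k+1}r$, Proposition \ref{6.1} applies there, giving $E(u,p;\theta^{k+2}r) \leq c_1\theta^{\alpha_1}\theta^{(k+1)\beta_1}(K+1)\Phi_0$. Closing the induction requires $c_1\theta^{\alpha_1}(K+1) \leq \theta^{\beta_1} K$, which after using $c_1\theta^{\alpha_1-\beta_1} \leq \theta^{(\alpha_1-\beta_1)/2}$ reduces to $\theta^{(\alpha_1-\beta_1)/2}(K+1) \leq K$. Expanding the definition of $K$ shows this is equivalent to $(1 - \theta^{(\alpha_1-\beta_1)/2})^2 \geq 0$, which is automatic and clarifies the role of the square-root exponent in the parameter constraint. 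The main obstacle is that the control of $A$ and the decay of $E$ cannot be decoupled: $A(u;\theta^k r) < M$ is needed as input to Proposition \ref{6.1} to produce the decay, but the telescoping argument for $A$ itself relies on the decay of $E^V$ at all previous scales. This coupling is what forces the simultaneous induction and makes the geometric sum $(1-\theta^{\beta_1})^{-1}$, finite only because $\beta_1 > 0$, appear in the smallness requirement on $\ol{\eps_1}$.
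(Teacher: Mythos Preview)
Your proof is correct and follows essentially the same inductive scheme as the paper: simultaneous induction on $k$, telescoping the averages to control $A$, and applying Proposition~\ref{6.1} at each step to propagate the decay. The only cosmetic differences are that the paper uses the sharper constant $\theta^{-5/3}$ (rather than your $\theta^{-5}$) in the bound $|(u)_{\theta^{j+1}r}-(u)_{\theta^j r}|\le \theta^{-5/3}E^V(u;\theta^j r)$, and it closes the recursion by direct computation whereas you unwind it to the pleasant identity $(1-\theta^{(\alpha_1-\beta_1)/2})^2\ge 0$; neither affects the argument.
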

\begin{proof}
    We prove the lemma by induction on $k$. When $k=0$ this is a consequence of Lemma \ref{5.1} if $\ol{\eps_1}<\eps_1$, since we have
    \begin{align}
        E(u,p;\theta r)&\leq c_1\theta^{\alpha_1}(E(u,p;r)+c_\gamma(f) r^\beta)\notag
        \\
        &\leq \theta^{\frac{\alpha_1+\beta_1}{2}}c_1\theta^{\frac{\alpha_1-\beta_1}{2}}(E(u,p;r)+c_\gamma(f) r^\beta)\notag
        \\
        &\leq \theta^{\frac{\alpha_1+\beta_1}{2}}(E(u,p;r)+c_\gamma(f) r^\beta).\notag
    \end{align}
    Now assume the statement is true for $\ell=0,\ldots,k$. We observe that
    \begin{align}
        |(u)_r-(u)_{\theta r}|&=\left|\mean{B_{\theta r}} u-(u)_r\ dx\right|\leq\left(\mean{B_{\theta r}} |u-(u)_r|^3\ dx\right)^{1/3}\leq\frac{1}{\theta^{\frac{5}{3}}}E^V(u;r).\notag
    \end{align}
    Thus,
    \begin{align}
        |(v)_{\theta^k r}|\leq \frac{1}{\theta^{\frac{5}{3}}}\sum_{i=0}^{k-1}E^V(u;\theta^i r)+|(v)_{r}|,\notag
    \end{align}
    and as a result,
    \begin{align}
        A(u;\theta^k r)\leq \frac{(\theta^k r)^{2s-1}}{\theta^{\frac{5}{3}}}\sum_{i=0}^{k-1}E^V(u;\theta^i r)+\theta^{k(2s-1)}A(u,r).\notag
    \end{align}
    Using this we may therefore compute that
    \begin{align}
        A(u;\theta^{k+1} r)&\leq \frac{(\theta^k r)^{2s-1}}{\theta^{\frac{5}{3}}}\sum_{i=0}^kE^V(u;\theta^i r)+\theta^{(k+1)(2s-1)}A(u;r)\notag
        \\
        &\leq\frac{R_1^{2s-1}}{\theta^{5/3}}\frac{1}{1-\theta^{\beta_1}}\frac{\ol{\eps_1}}{1-\theta^{\frac{\alpha_1-\beta_1}{2}}}+\frac{M}{2}\notag
        \\
        &< M,\notag
    \end{align}
    for $\ol{\eps_1}$ chosen sufficiently small (independent of $k$). To check the second condition we have that
    \begin{align}
        E(u,p;\theta^{k+1}r)+c_\gamma (\theta^{k+1} r)^\beta&\leq\frac{\theta^{(k+1)\beta_1}}{1-\theta^{\frac{\alpha_1-\beta_1}{2}}}(E(u,p;r)+c_\gamma(f) r^\beta)+c_\gamma(f)(\theta^{k+1} r)^\beta\notag
        \\
        &\leq\theta^{(k+1)\beta_1}\left(\frac{E(u,p;r)+c_\gamma(f) r^\beta}{1-\theta^{\frac{\alpha_1-\beta_1}{2}}}+c_\gamma(f) r^\beta\right)\notag
        \\
        &\leq\theta^{(k+1)\beta_1}2\frac{E(u,p;r)+c_\gamma(f) r^\beta}{1-\theta^{\frac{\alpha_1-\beta_1}{2}}}\notag
        \\
        &<\eps_1,\notag
    \end{align}
    again if $\ol{\eps_1}$ is sufficiently small, independent of $k$. Finally we check the last condition by applying Proposition \ref{6.1} to see that
    \begin{align}
        E(u,p;\theta^{k+2}r)&\leq\theta^{\frac{\alpha_1+\beta_1}{2}}(E(u,p;\theta^{k+1}r)+c_\gamma(f)(\theta^{k+1} r)^\beta)\notag
        \\
        &\leq \theta^{\frac{\alpha_1+\beta_1}{2}}\left(\theta^{(k+1)\beta_1}\frac{E(u,p;r)+c_\gamma(f) r^\beta}{1-\theta^{\frac{\alpha_1-\beta_1}{2}}}+c_\gamma(f)(\theta^{k+1} r)^\beta\right)\notag
        \\
        &\leq \theta^{\frac{\alpha_1+\beta_1}{2}}\theta^{(k+1)\beta_1}\left(\frac{E(u,p;r)+c_\gamma(f) r^\beta}{1-\theta^{\frac{\alpha_1-\beta_1}{2}}}+c_\gamma(f) r^\beta\right)\notag
        \\
        &=\theta^{(k+2)\beta_1}\theta^{\frac{\alpha_1-\beta_1}{2}}\left(\frac{E(u,p;r)+c_\gamma(f) r^\beta}{1-\theta^{\frac{\alpha_1-\beta_1}{2}}}+c_\gamma(f) r^\beta\right)\notag
        \\
        &\leq\theta^{(k+2)\beta_1}\frac{1}{1-\theta^{\frac{\alpha_1-\beta_1}{2}}}\left(E(u,p;r)+c_\gamma(f) r^\beta\right).\notag
    \end{align}
\end{proof}

Now we can relate a smallness hypothesis for $\ol{E}$ to the hypothesis on $E$ in Proposition \ref{6.1}.

\begin{Lem}\label{2.6}
    There exist numbers $\eps_0$ and $R_0$ such that if
    \begin{align}
        \ol{E}(u,p;r)+c_\gamma(f) r^{\frac{\gamma}{2}}<\eps_0,\label{2.6ineq}
    \end{align}
    for any $r<R_0$, then $u$ is H\"{o}lder continuous in some neighborhood of the origin.
\end{Lem}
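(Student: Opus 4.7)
The plan is to apply the iteration Lemma \ref{2.5} at every point $x_0$ in a small ball around the origin and then invoke Campanato's characterization of H\"{o}lder continuity. The key observations are that $\ol{E}$ controls both the excess $E$ (by the triangle inequality, $E^V\leq 2\ol{E^V}$ and similarly for $E^P,E^{nl}$) and the quantity $A(u;x,r)=r^{2s-1}|(u)_{x,r}|$ (by Jensen, since $|(u)_{x,r}|\leq \ol{E^V}(u;x,r)$, hence $A(u;x,r)\leq r^{2s-1}\ol{E^V}(u;x,r)$), so that a smallness hypothesis on $\ol{E}$ together with smallness of the base scale $r_0$ implies both hypotheses of Lemma \ref{2.5}.

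First I would transfer the smallness hypothesis at $x=0$ to nearby points. Fix $r_0<R_0$. For $x_0\in B_{r_0/4}(0)$ the inclusion $B_{r_0/2}(x_0)\subset B_{r_0}(0)$ gives by a direct comparison of averages
\begin{align}
\ol{E^V}(u;x_0,r_0/2) + \ol{E^P}(p;x_0,r_0/2)\leq C\bigl(\ol{E^V}(u;0,r_0) + \ol{E^P}(p;0,r_0)\bigr),\notag
\end{align}
and an analogous bound for $\ol{E^{nl}}$ follows from $B_R(x_0)\subset B_{R+r_0/4}(0)\subset B_{3R}(0)$ for $R\geq r_0/8$, with the weights $(r_0/2 R^{-1})^{3s}$ matched up to a constant against $(r_0 R^{-1})^{3s}$. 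Combining these with the hypothesis $\ol{E}(u,p;0,r_0)+c_\gamma(f)r_0^{\gamma/2}<\eps_0$ gives
\begin{align}
\ol{E}(u,p;x_0,r_0/2) + c_\gamma(f)(r_0/2)^{\gamma/2}<C\eps_0,\notag
\end{align}
uniformly in $|x_0|<r_0/4$.

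Next I would apply Lemma \ref{2.5} with $\beta=\gamma/2$ (and a suitable choice of $\theta,M$ and exponent $\beta_1$ with $\beta_1<\alpha_1$ and $\beta_1\leq\gamma/2$) at each such $x_0$. For $\eps_0$ and $r_0$ sufficiently small, both hypotheses $A(u;x_0,r_0/2)<M/2$ and $E(u,p;x_0,r_0/2)+c_\gamma(f)(r_0/2)^{\gamma/2}<\ol{\eps_1}$ hold by the preceding two observations. The iteration then yields
\begin{align}
E(u,p;x_0,\theta^{k+1}(r_0/2))\leq C\eps_0\,\theta^{(k+1)\beta_1},\quad k=0,1,2,\ldots,\notag
\end{align}
and a standard interpolation across the dyadic scales $\theta^{k+1}(r_0/2)\leq\rho\leq\theta^k(r_0/2)$ (using the elementary estimate $\mean{B_\rho(x_0)}|u-(u)_{x_0,\rho}|^3\leq C\mean{B_{\theta^k(r_0/2)}(x_0)}|u-(u)_{x_0,\theta^k(r_0/2)}|^3$) gives $E^V(u;x_0,\rho)\leq C\rho^{\beta_1}$ for all $\rho<r_0/2$ and $|x_0|<r_0/4$. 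By Campanato's characterization of H\"{o}lder continuity this yields $u\in C^{0,\beta_1}(B_{r_0/4}(0))$.

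The main expected obstacle is the transfer of the nonlocal smallness $\ol{E^{nl}}(u;x_0,r_0/2)\leq C\ol{E^{nl}}(u;0,r_0)$, which requires carefully matching the supremum scales $R$ appearing in its defining supremum for balls centered at different points. Aside from this the proof is essentially bookkeeping combining the comparison $E\leq C\ol{E}$, the Jensen bound on $|(u)_{x_0,r}|$, the iteration Lemma \ref{2.5}, and the Campanato embedding.
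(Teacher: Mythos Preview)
Your proposal is correct and follows essentially the same route as the paper: bound $A$ and $E$ by $\ol{E}$ via Jensen and the triangle inequality, feed these into Lemma \ref{2.5} with $\beta=\gamma/2$ and $\beta_1=\tfrac{1}{2}\min\{\alpha_1,\gamma\}$, and conclude by the Campanato criterion. The only minor difference is that the paper transfers the smallness hypothesis to nearby points $x_0$ by invoking the continuity of $x\mapsto\ol{E}(u,p;x,r)$ rather than by your explicit ball-inclusion comparison, which sidesteps the nonlocal bookkeeping you flagged as the main obstacle.
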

\begin{proof}
    We take $R_0$ to be the constant $R_1$ obtained in Proposition \ref{6.1} when $\beta=\frac{\gamma}{2}$. Because $x\rightarrow\ol{E}(u,p;x,r)$ is continuous, if \eqref{2.6ineq} holds then there exists a neighborhood $\mc{O}$ of the origin such that
    \begin{align}
        \ol{E}(u,p;x,r)+c_\gamma(f)r^\beta<\eps_0,\quad\text{for all}\ x\in\mc{O}.\notag
    \end{align}
    Now we claim that $\ol{E}(u,p;x,r)+c_\gamma(f) r^\beta<\eps_0$ implies
\begin{subequations}
    \begin{align}
        A(u;x,r/2)&<\frac{3}{2},\label{tempcon1}
        \\
        E(u,p;x,r/2)+c_\gamma(f) (r/2)^\beta&<\ol{\eps_1}.\label{tempcon2}
    \end{align}
\end{subequations}
    Here $A(u;x,r)=r^{2s-1}|(u)_{B_r(x)}|$. For the first inequality we note that
    \begin{align}
        A(u;x,r)< R_0^{2s-1}\ol{E}(u,p;x,r)<\frac{3}{2}\notag
    \end{align}
    if $\eps_0$ is taken small enough. For the second inequality, we claim given $\ol{\eps_1}>0$ there exists $\eps_0$ such that if $\ol{E}(u,p;r)<\eps_0$ then $E(u,p;r/2)<\ol{\eps_1}$. For this it suffices by the scaling invariance of $r^{2s-1}\ol{E}(u,p;r)$ and $r^{2s-1}E(u,p;r)$ to show that there exists $\eps_0>0$ so that if $\ol{E}(u,p;2)<\eps_0$ then $E(u,p;1)<\ol{\eps_1}$; we observe that
    \begin{align}
        E^V(u;1)&\leq\left(\mean{B_1}|u|^3\ dx\right)^{\frac{1}{3}}+C|(u)_1|\leq 2\left(\mean{B_1}|u|^3\ dx\right)^{\frac{1}{3}}\leq C \eps_0,\notag
    \end{align}
    and similarly $E^P(u;1)\leq C\eps_0$. For the nonlocal part we have that $B_R(x)\subset B_{R+1}$ when $R\geq \frac{1}{4}$ so
    \begin{align}
        \left( \sup_{R\geq\frac{1}{4}}R^{-3s}\mean{B_R}|u-(u)_1|^2\ dx\right)^{\frac{1}{2}}\leq C\left(\sup_{R\geq\frac{1}{4}}R^{-3s}\mean{B_R}|u|^2\ dx\right)^{\frac{1}{2}}+C|(u)_1|\leq C\eps_0,\notag
    \end{align}
    since $|(u)_1|\leq C\|u\|_{L^3(B_1)}$.
    Finally it suffices to choose $\eps_0>0$ small enough so that
    \begin{align}
        C\eps_0+C\eps_0+C\eps_0<\ol{\eps_1}.\notag
    \end{align}

Having now justified \eqref{tempcon1} and \eqref{tempcon2} we may apply Lemma \ref{2.5} with $\beta_1=\frac{1}{2}\min\{\alpha_1,\gamma\}$, $M=3$, $\beta=\frac{\gamma}{2}$, and $\theta $ small enough to satisfy the hypotheses of Lemma \ref{2.5}. This implies that
    \begin{align}
        E(u,p;x,\rho)\leq C\left(\frac{\rho}{r/2}\right)^\beta_1,\notag
    \end{align}
    for all $\rho\in(0,r/2)$ and $x\in\mc{O}$, so that the Campanato criterion implies that $u$ is H\"{o}lder continuous in a neighborhood of the origin.
\end{proof}

We can now prove Theorem \ref{main}.

\begin{proof}[Proof of Theorem \ref{main}]
    It suffices to consider $x=0$. Let $R_0$ and $\eps_0$ be as in Lemma \ref{2.6}. Given $(u,p)$ and $f$ we choose $R>0$ so that
    \begin{align}
        R<\frac{R_0}{2},\quad\frac{2R}{R_0} c_\gamma(f) R^{\frac{\gamma}{2}}<\ol{\eps_0}.\notag
    \end{align}
    Now we use the scaling properties of \eqref{NS5}. If we define
    \begin{align}
        u_\tau(x)&=\tau^{2s-1}u(\tau x),\quad p_\tau(x)=\tau^{4s-2}p(\tau x),\quad f_\tau(x)=\tau^{4s-1}f(\tau x),\notag\label{scaling}
    \end{align}
    then $u_\tau$, $p_\tau$, and $f_\tau$ also satisfy \eqref{NS5} we have the scaling of the energy,
    \begin{align}
        \ol{E}(u_\tau,p_\tau;\frac{R_0}{2})=\tau^{2s-1}\ol{E}(u,p;\tau\frac{R_0}{2}),\notag
    \end{align}
    and the scaling of $c_\gamma$,
    \begin{align}
        c_\gamma(f_\tau)=\sup\left\{\frac{1}{r^{\gamma-2s}}\left(\mean{B_r(x)}|f_\tau|^2\ dx\right)^{\frac{1}{2}}\right\}.\notag
    \end{align}
    Observe also that
    \begin{align}
        \frac{1}{r^{\gamma-2s}}\left(\mean{B_r(x)}|f_\tau|^2\ dx\right)^{\frac{1}{2}}&=\frac{1}{r^{\gamma-2s}}\left(\mean{B_{\tau r}(x)}|\tau^{4s-1}f|^2\ dx\right)^{\frac{1}{2}}\notag
        \\
        &=\tau^{\gamma+4s-3}\frac{1}{(\tau r)^{\gamma-2s}}\left(\mean{B_{\tau r}(x)}|f|^2\ dx\right)^{\frac{1}{2}}\notag
        \\
        &\leq \tau^{\gamma+4s-3}c_\gamma(f).\notag
    \end{align}
    We may check therefore with $\tau=\frac{2R}{R_0}<1$ that
    \begin{align}
        \ol{E}(u_\tau,p_\tau;\frac{R_0}{2})+c_\gamma(f_\tau)(R_0/2)^{\gamma/2}&\leq\tau^{2s-1}\left(\ol{E}(u,p;R)+\tau^{\gamma+2s-2}c_\gamma(f)(R_0/2)^{\gamma/2}\right)\notag
        \\
        &\leq \tau^{2s-1}\left(\ol{E}(u,p;R)+\tau^{\gamma/2+2s-2}c_\gamma(f) R^{\gamma/2}\right)\notag
        \\
        &<C\ol{\eps_0}\notag
    \end{align}
    Therefore if $\ol{\eps_0}$ is chosen sufficiently small then by Lemma \ref{2.6}, $u_\tau$ and thus $u$ is H\"{o}lder continuous in a neighborhood of the origin.
\end{proof}

\section{Proof of Theorem \ref{epsmain}}\label{finalsection}

Theorem \ref{epsmain} will be proved in this section. It is an immediate consequence of Proposition \ref{tcddec} below, which in turn follows directly by combining Theorem \ref{main} with Lemmas \ref{BT}, \ref{C}, and \ref{D} which make up the remainder of this section. The arguments here are derived from those in \cite{CDM} and \cite{TY2} but the time-independence of \eqref{NS5} allows for some simplifications.

Before proceeding to the results of this section we need to define a higher order energy $\mc{E}$ and some quantities related to $\ol{E}$ in Section \ref{decaysection}.

We define the following energy which is the natural analogue in our case to the energy used in \cite{CDM},
\begin{align}
    \mc{E}(u;x,r)=r^{-7+6s}\int_{B_r^+(x)}y^b|\onabla(\nabla u)^\flat|^2\ dx\ dy,
\end{align}
and seek to show that the smallness of $\mc{E}$ will imply the hypothesis of Theorem \ref{main}. That is, we assume $\limsup_{r\rightarrow 0}\mc{E}(u;x,r)$ is small and seek to bound the three quantities in the hypothesis of Theorem \ref{main}. Note that $\mc{E}$ is scale invariant in the sense that $\mc{E}(u_\tau;x,r)=\mc{E}(u;x,\tau r)$ for the scaling $u\rightarrow u_\tau$ of \eqref{scaling}. In what follows it suffices to consider $x=0$ and $u$ is fixed, so we drop these from the arguments for $\mc{E}$ and simply write $\mc{E}(r)$ for this energy.

We further define the following quantities which are also scale-invariant:
\begin{alignat}{2}
	\quad \mc{T}(r)=r^{7s-2}\sup_{R\geq\frac{r}{4}}\frac{1}{R^{3s}}\mean{B_R}|u|^2\ dx,\quad\mc{C}(r)=r^{-8+6s} \int_{B_r}|u|^3\ dx,\quad \mc{D}(r)=r^{-8+6s}\int_{B_r}|p|^{\frac{3}{2}}\ dx.\notag
\end{alignat}

These quantities are closely related to the energy $\ol{E}$ of Theorem \ref{main}:
\begin{align}
    r^{2s-1}\ol{E}(u,p;r)=\mc{T}(r)^{1/2}+\mc{C}(r)^{1/3}+\mc{D}(r)^{2/3}.
\end{align}
Therefore to prove Theorem \ref{epsmain} it suffices to show Proposition \ref{tcddec} below, which is the main result of this section.

\begin{Prop}\label{tcddec}
    Given $\ol{\eps_0}>0$, there exists $\eps>0$ such that if $\limsup_{r\rightarrow 0}\mc{E}(r)<\eps$, then
    \begin{align}
        \limsup_{r\rightarrow 0}\mc{T}(r)+\mc{C}(r)+\mc{D}(r)<\ol{\eps_0}.
    \end{align}
\end{Prop}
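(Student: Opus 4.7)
The plan is to reduce Proposition \ref{tcddec} to Theorem \ref{main} by proving three independent decay estimates, one controlling each of $\mathcal{T}$, $\mathcal{C}$, and $\mathcal{D}$ in terms of $\mathcal{E}$ (plus lower-order force contributions), and then invoking the identity
\begin{align}
r^{2s-1}\overline{E}(u,p;r) = \mathcal{T}(r)^{1/2} + \mathcal{C}(r)^{1/3} + \mathcal{D}(r)^{2/3}.\notag
\end{align}
Once each of $\mathcal{T}(r)$, $\mathcal{C}(r)$, $\mathcal{D}(r)$ is shown to be small in the limsup whenever $\mathcal{E}(r)$ is, the identity above forces the hypothesis of Theorem \ref{main}, and regularity of the origin follows directly.

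First I would bound $\mathcal{T}(r)$. The key observation is that $\mathcal{E}(r)$ is, by the energy identity of Theorem \ref{CaffS} applied to $\nabla u \in \dot H^{s-1}(\mathbb{R}^n)$ (with $\tilde{a}=b$), a localized version of $\|u\|_{\dot H^s}^2$ at scale $r$. Using a Poincaré-type inequality in the spirit of Lemma \ref{3.4} to transfer control of $(\nabla u)^\flat$ to control of $u$ modulo its mean, together with a dyadic decomposition of the supremum $\sup_{R \geq r/4}$ in the definition of $\mathcal{T}$, one can split each scale $2^k r$ into a local contribution (absorbed by $\mathcal{E}(2^k r)$) and a transfer of information to the next dyadic scale. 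The summable prefactors $(r/R)^{3s}$ allow the resulting geometric series to close, giving $\mathcal{T}(r) \lesssim \sup_{r' \geq r} \mathcal{E}(r')$ up to force terms.

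Next, for $\mathcal{C}(r)$ I would apply Lemma \ref{4.4} directly: it gives $\|u\|_{L^{10/(5-2s)}(B_r)}^2$ controlled by $\mathcal{E}(r)$ together with the nonlocal tail that is precisely $\mathcal{T}(r)$. Because $s>1$ forces $10/(5-2s) > 3$, Hölder on $B_r$ yields $\|u\|_{L^3(B_r)}$, and the scaling prefactor $r^{-8+6s}$ is exactly what is needed for the bound to be scale-invariant, giving $\mathcal{C}(r) \lesssim \mathcal{E}(r)^{3/2} + \mathcal{T}(r)^{3/2}$. For $\mathcal{D}(r)$, I would differentiate the steady equation: since $\operatorname{div} u = 0$ and $\operatorname{div}(-\Delta)^s u = 0$, taking $\operatorname{div}$ of \eqref{NS5} gives $-\Delta p = \partial_i \partial_j(u_i u_j) - \operatorname{div} f$. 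A localized Calderón--Zygmund estimate, splitting $p$ into a Newtonian-potential piece with localized data plus a harmonic correction (the latter estimated by its $L^{3/2}$ mean on a slightly larger ball), yields an inequality of the form $\mathcal{D}(\theta r) \leq C\mathcal{C}(r) + C c_\gamma(f)^{3/2} r^{\beta'} + \theta^{\alpha}\mathcal{D}(r)$ for some $\alpha > 0$; iterating absorbs the harmonic tail into the small-exponent contraction.

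The hardest step should be the $\mathcal{T}$-bound, since the supremum over arbitrarily large radii in its definition rules out a single local estimate: the global $\dot H^s$-integrability of $u$ must be propagated carefully through the dyadic ladder, and the prefactor $r^{7s-2}/R^{3s}$ provides summable decay only when the geometry of the blow-up is exploited. The pressure estimate is in comparison routine, as the steady equation eliminates time-dependence complications and reduces the pressure problem to a standard elliptic Calderón--Zygmund bound. Combining the three lemmas with Theorem \ref{main} yields Proposition \ref{tcddec}, and hence Theorem \ref{epsmain}.
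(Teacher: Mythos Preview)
Your outline has a genuine gap in the $\mathcal{T}$-estimate. You propose to prove $\mathcal{T}(r)\lesssim\sup_{r'\geq r}\mathcal{E}(r')$ by a dyadic telescoping over scales $R\geq r/4$. But the hypothesis of Proposition~\ref{tcddec} only controls $\limsup_{r\to 0}\mathcal{E}(r)$; the values $\mathcal{E}(r')$ for $r'$ bounded away from $0$ are fixed and possibly large, so a bound of the shape $\mathcal{T}(r)\lesssim\sup_{r'\geq r}\mathcal{E}(r')$ cannot force $\limsup_{r\to 0}\mathcal{T}(r)$ to be small. The paper's route is different in kind: it introduces the intermediate boundary quantity $\mathcal{B}(r)=r^{-5+4s}\int_{B_r}|\nabla u|^2$ and proves \emph{contraction} inequalities $\mathcal{B}(\theta r)\leq\tfrac12\mathcal{B}(r)+C\mathcal{E}(r)$ (Lemma~\ref{BB}) and $\mathcal{T}(\theta r)\leq\tfrac12\mathcal{T}(r)+C\mathcal{B}(r)$ (Lemma~\ref{BT}). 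In the second of these, the large-$R$ part of the supremum in $\mathcal{T}(\theta r)$ is absorbed back into $\mathcal{T}(r)$ itself with the small factor $C\theta^{4s-2}$, and only the range $R\in[\theta/4,1/4]$ is handled by the local Poincar\'e bound through $\mathcal{B}$. Iterating from a fixed scale $r_0$ then kills the large initial value $\mathcal{T}(r_0)$ geometrically. This self-referential contraction, not a summable dyadic ladder over $\mathcal{E}$ at large scales, is the mechanism you are missing.

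A related issue is your use of Lemma~\ref{4.4} for $\mathcal{C}$. That lemma is stated for the Yang extension $u^*$ of Theorem~\ref{Rays} and bounds by $\int y^b|\oDelta_b u^*|^2$, whereas $\mathcal{E}$ is built from the Caffarelli--Silvestre extension $(\nabla u)^\flat$ of Theorem~\ref{CaffS}; the two localized energies are not interchangeable without further argument. The paper sidesteps this by again routing through $\mathcal{B}$: once $\mathcal{B}$ is controlled, ordinary Sobolev--Poincar\'e on $\mathbb{R}^5$ (no extensions) gives the iteration $\mathcal{C}(\theta)\leq\tfrac12\mathcal{C}(1)+C\mathcal{B}(1)$ under smallness of $\mathcal{B}$ (Lemma~\ref{C}), and similarly for $\mathcal{D}$ (Lemma~\ref{D}). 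Finally, note that Proposition~\ref{tcddec} does not reduce to Theorem~\ref{main}; it is the step that, once established, lets you invoke Theorem~\ref{main} to obtain Theorem~\ref{epsmain}.
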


The rest of this section is devoted to proving Lemmas \ref{BT}, \ref{C}, and \ref{D}, which together with Theorem \ref{main} directly imply Proposition \ref{tcddec}. In order to prove Lemma \ref{BT} though we first need to introduce and compare with $\mc{E}$ the following auxiliary scale-invariant quantity
\begin{align}
	    \mc{B}(r)=r^{-5+4s}\int_{B_r}|\nabla u(x,t)|^2\ dx.\notag
\end{align}
After proving this decay estimate for $\mc{B}$ in Lemma \ref{BB} we continue through the proofs of Lemmas \ref{BT}--\ref{D} to conclude this section.

\begin{Lem}\label{BB}
	Let $s\in(1,2)$. There exists a $C>0$ such that for any $u\in\dot{H}^s(\mb{R}^5)$,
	\begin{align}
		\limsup_{r\rightarrow 0}\mc{B}(r)\leq C\limsup_{r\rightarrow 0}\mc{E}(r).
	\end{align}
\end{Lem}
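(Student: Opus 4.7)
The plan is to combine the Caffarelli--Silvestre extension of $\nabla u\in\dot{H}^{s-1}(\mb{R}^5)$ (apply Theorem \ref{CaffS} with $\tilde s=s-1$, so that $\tilde a=b$) with a weighted Poincar\'e inequality on $B_{2r}^+$ and the fractional Sobolev embedding $\dot{H}^{s-1}(\mb{R}^5)\hookrightarrow L^{10/(7-2s)}(\mb{R}^5)$. Set $w=\nabla u$, so that $w^\flat=(\nabla u)^\flat$ satisfies $\oDelta_b w^\flat=0$ and has trace $w$ on $\mb{R}^5$.

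The main step is to show that for each $r>0$,
\begin{equation*}
\int_{B_r}|w-c_r|^2\,dx\le Cr^{2s-2}\int_{B_{2r}^+}y^b|\onabla w^\flat|^2\,dx\,dy,
\end{equation*}
where $c_r$ denotes the $y^b$-weighted mean of $w^\flat$ over $B_{2r}^+$. To obtain this, take a smooth cutoff $\Phi$ between $B_r^+$ and $B_{2r}^+$, set $\varphi=\Phi(\cdot,0)$, and apply part (c) of Theorem \ref{CaffS} to compare the extension of $(w-c_r)\varphi$ with the competitor $(w^\flat-c_r)\Phi$. Expanding $|\onabla((w^\flat-c_r)\Phi)|^2\le 2|\onabla w^\flat|^2\Phi^2+2|w^\flat-c_r|^2|\onabla\Phi|^2$ and applying the weighted Poincar\'e inequality $\int_{B_{2r}^+}y^b|w^\flat-c_r|^2\le Cr^2\int_{B_{2r}^+}y^b|\onabla w^\flat|^2$ bounds $\int_{\mb{R}^6_+}y^b|\onabla((w-c_r)\varphi)^\flat|^2\le C\int_{B_{2r}^+}y^b|\onabla w^\flat|^2$. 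Theorem \ref{CaffS}(b), the Sobolev embedding, and H\"older's inequality on $B_r$ then produce the factor $r^{2(s-1)}$ and yield the displayed bound.

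Dividing by $r^{5-4s}$ reads the above as $r^{-5+4s}\int_{B_r}|w-c_r|^2\le C\mc{E}(2r)$, so by the triangle inequality
\begin{equation*}
\mc{B}(r)\le C\mc{E}(2r)+Cr^{4s}|c_r|^2.
\end{equation*}
The main obstacle is showing that $r^{4s}|c_r|^2$ remains controlled as $r\to 0$, since $c_r$ itself may diverge. I would handle this by a telescoping dyadic argument: by Cauchy--Schwarz and weighted Poincar\'e,
\begin{equation*}
|c_r-c_{r/2}|^2\le\frac{1}{\int_{B_r^+}y^b}\int_{B_{2r}^+}y^b|w^\flat-c_r|^2\,dx\,dy\le Cr^{-4s}\mc{E}(2r),
\end{equation*}
using $\int_{B_r^+}y^b\asymp r^{9-2s}$ and $\int_{B_{2r}^+}y^b|\onabla w^\flat|^2\le Cr^{7-6s}\mc{E}(2r)$. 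If $\mc{E}(\rho)\le L$ for all $\rho<r_0$, summing a geometric series along $r_k=2^{-k}r_0$ gives $|c_{r_k}|\le|c_{r_0}|+CL^{1/2}r_k^{-2s}$, hence $r_k^{4s}|c_{r_k}|^2\le CL+o(1)$ as $k\to\infty$. Combining with the decomposition of $\mc{B}(r_k)$, taking $\limsup$ in $k$ and then $L\to\limsup_{\rho\to 0}\mc{E}(\rho)$ delivers the conclusion.
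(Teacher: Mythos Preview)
Your argument is correct, but it takes a genuinely different route from the paper. The paper establishes the iteration inequality $\mc{B}(\theta r)\le\tfrac12\mc{B}(r)+C\mc{E}(r)$ for some fixed small $\theta$, obtained by relating $w$ and $w^\flat$ along vertical lines via the fundamental theorem of calculus and then applying an \emph{unweighted} Poincar\'e inequality on the slab $B_1\times[\theta,1]$ (where $y^b$ is bounded above and below), tracking the powers of $\theta$ to close the iteration. Your approach instead localizes through the Caffarelli--Silvestre minimality (Theorem~\ref{CaffS}(c)), uses the weighted Poincar\'e inequality on the full half-cylinder $B_{2r}^+$ for the $A_2$ weight $y^b$, and passes through the Sobolev embedding $\dot H^{s-1}\hookrightarrow L^{10/(7-2s)}$ to get a trace-type bound; the drifting constants $c_r$ are then handled by a dyadic telescope. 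Both arguments are sound: the paper's is more elementary and self-contained (no weighted Poincar\'e, no Sobolev embedding), while yours is more structural and makes transparent why the exponent $2s-2$ appears. Two small points worth making explicit in a write-up: the weighted Poincar\'e inequality on $B_{2r}^+$ is a standard $A_2$ fact (Fabes--Kenig--Serapioni) but is not stated in the paper, and the passage from the dyadic $\limsup_k\mc{B}(r_k)$ to the full $\limsup_{r\to 0}\mc{B}(r)$ uses the trivial comparison $\mc{B}(r)\le 2^{|4s-5|}\mc{B}(r_k)$ for $r\in[r_{k+1},r_k]$.
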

\begin{proof}
	We claim there exists $\theta\in(0,1/4)$ such that
    \begin{align}
        \mc{B}(\theta r)\leq\frac{1}{2}\mc{B}(r)+C\mc{E}(r).\label{Bclaim}
    \end{align}
    If so, then by the scaling invariance of $\mc{B}$ if we let $\limsup_{r\rightarrow 0}\mc{E}(r)=\eps$ we may take $r_0>0$ such that $\mc{E}(r)<2\eps$ for all $r<r_0$ and then apply \eqref{Bclaim} to see that
\begin{align}
	\mc{B}(\theta^k r)\leq\left(\frac{1}{2}\right)^k\mc{B}(r)+C\sum_{j=0}^{k-1}\left(\frac{1}{2}\right)^j\eps,\quad  k=1,2,\ldots\notag
\end{align}
which shows that there indeed exists $C$ such that $\limsup_{r\rightarrow 0}\mc{B}(r)\leq C\limsup_{r\rightarrow 0}\mc{E}(r)$. 

Again by the scaling invariance of $\mc{B}$ and $\mc{E}$, it suffices to prove this claim for $r=1$. First, we let $w=\nabla u$ so that $w^\flat=(\nabla u)^\flat$ (recall the definition of the extension $w^\flat$ from Theorem \ref{CaffS}). Therefore by the fundamental theorem of calculus,
\begin{subequations}
    \begin{align}
        w^\flat(x,y)&=w(x)+\int_0^y\partial_z w(x,z)\ dz\label{bequality1}
        \\
        w(x)&=w^\flat(x,y)-\int_0^y\partial_z w(x,z)\ dz.\label{bequality2}
    \end{align}
\end{subequations}
    Noting that $|\partial_z w^\flat|\leq|\onabla w^\flat|$, we can take the the square of \eqref{bequality1} and apply H\"{o}lder's inequality to obtain
	\begin{align}
		|w^\flat|(x,y)&\leq C|w|^2(x)+C\left(\int_0^y|\onabla w^\flat|\ dz\right)^2\notag
		\\
		&\leq C|w|^2(x)+C\left(\int_0^y z^b|\onabla w^\flat|^2\ dz\right)\left(\int_0^y z^{-b}\ dz\right)\notag
	\end{align}
	We can similarly do this for \eqref{bequality2}. Since $b=3-2s>-1$ we can therefore conclude that for $y\in[0,1]$,
\begin{subequations}
    \begin{align}
        |w^\flat|^2(x,y)\leq C |w|^2(x)+C\int_0^y z^b|\onabla w^\flat|^2(x,z)\ dz,\label{bint1}
        \\
        |w|^2(x)\leq C |w^\flat|^2(x,y)+C\int_0^y z^b|\onabla w^\flat|^2(x,z)\ dz.\label{bint2}
    \end{align}
\end{subequations}
    Integrating \eqref{bint1} on $B_1^+$ gives
    \begin{align}
        \int_{B_1^+}|w^\flat|^2\ dx\ dy\leq C\mc{B}(1)+C\mc{E}(1),\notag
    \end{align}
    and integrating the \eqref{bint2} on $B_\theta\times[\theta,2\theta]$ and then multiplying by $\theta^{-6+4s}$ gives
    \begin{align}
        \mc{B}(\theta)\leq C\theta^{-6+4s} \int_{B_\theta\times[\theta,2\theta]}|w^\flat|^2\ dx\ dy+C\theta^{-6+4s}\mc{E}(1).\notag
    \end{align}
    We will estimate above the first integral on the right:
    \begin{align}
        \int_{B_\theta\times[\theta,2\theta]}|w^\flat|^2\ dx\ dy&\leq C \int_{B_\theta\times[\theta,2\theta]}\left|w^\flat-\mean{B_1\times[\theta,1]}w^\flat\ d\tilde{x}\ d\tilde{y}\right|^2\ dx\ dy+C\theta^6\left|\mean{B_1\times[\theta,1]}w^\flat\ d\tilde{x}\ d\tilde{y}\right|^2\notag
        \\
        &\leq C\int_{B_1\times[\theta,1]}\left|w^\flat-\mean{B_1\times[\theta,1]}w^\flat\ d\tilde{x}\ d\tilde{y}\right|^2\ dx\ dy+C\theta^6\int_{B_1^+}|w^\flat|^2\ dx\ dy\notag
        \\
        &\leq C \int_{B_1\times[\theta,1]}|\onabla w^\flat|^2\ dx\ dy+C\theta^6\int_{B_1^+}|w^\flat|^2\ dx\ dy\notag
        \\
        &\leq C \theta^{-3+2s}\int_{B_1^+}y^b|\onabla w^\flat|^2\ dx\ dy+C\theta^6\int_{B_1^+}|w^\flat|^2\ dx\ dy\notag
        \\
        &\leq C \theta^{-3+2s}\mc{E}(1)+C\theta^6\int_{B_1^+}|w^\flat|^2\ dx\ dy.\notag
    \end{align}
    Putting everything together we therefore find that
    \begin{align}
        \mc{B}(\theta)&\leq C\theta^{4s} \int_{B_1^+}|w^\flat|^2\ dx\ dy+C(\theta^{-6+4s}+\theta^{-9+6s})\mc{E}(1)\notag
        \\
        &\leq C \theta^{4s}\mc{B}(1)+C(\theta^{4s}+\theta^{-6+4s}+\theta^{-9+6s})\mc{E}(1).\notag
    \end{align}
    Therefore choosing $\theta$ sufficiently small proves the claim and therefore concludes the proof.
\end{proof}

\begin{Lem}\label{BT}
    Let $s\in(1,2)$. There exists a $C>0$ such that for any function $u\in \dot{H}^s(\mb{R}^5)$,
    \begin{align}
        \limsup_{r\rightarrow 0}\mc{T}(r)\leq C\limsup_{r\rightarrow 0}\mc{E}(r).
    \end{align}
\end{Lem}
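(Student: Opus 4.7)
The strategy is to combine Lemma \ref{BB} (control of $\mc{B}$ by $\mc{E}$) with a dyadic telescoping argument for mean values, reducing the control of $\mc{T}$ to that of $\mc{B}$. Set $\eps=\limsup_{r\to 0}\mc{E}(r)$. By Lemma \ref{BB}, for any $\delta>0$ there exists $r_0>0$ such that $\mc{B}(R)\leq C\eps+\delta$ for all $R\leq r_0$. The goal will then be to show $\limsup_{r\to 0}\mc{T}(r)\leq C(\eps+\delta)$, from which the conclusion follows by sending $\delta\to 0$.

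For $r<r_0$, split the supremum in $\mc{T}(r)=r^{7s-2}\sup_{R\geq r/4}R^{-3s}\mean_{B_R}|u|^2\ dx$ into the tail $R\geq r_0$, which contributes $(r/r_0)^{7s-2}\mc{T}(r_0)\to 0$ as $r\to 0$, and the main range $R\in[r/4,r_0]$. In the main range I decompose
\begin{align}
\mean_{B_R}|u|^2\ dx\leq 2\mean_{B_R}|u-(u)_{B_R}|^2\ dx+2|(u)_{B_R}|^2.\notag
\end{align}
Poincar\'{e} bounds the first summand by $CR^2\mean_{B_R}|\nabla u|^2\ dx=CR^{2-4s}\mc{B}(R)$, and the prefactor $r^{7s-2}R^{-3s}$ contributes $(r/R)^{7s-2}\leq C$ uniformly on $R\geq r/4$, giving an overall bound $\leq C\mc{B}(R)\leq C(C\eps+\delta)$.

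The main obstacle is controlling the mean term $|(u)_{B_R}|^2$, which is not a local energy quantity. I plan to telescope across dyadic scales $R_k=2^k R$ up to the largest $K$ with $R_K\leq r_0$. Poincar\'{e} applied between consecutive scales yields $|(u)_{B_{R_k}}-(u)_{B_{R_{k+1}}}|\leq C R_{k+1}^{1-2s}\mc{B}(R_{k+1})^{1/2}$, and summing over $k$ uses the convergence of $\sum_k 2^{k(1-2s)}$ (valid because $s>1/2$) to produce
\begin{align}
|(u)_{B_R}|^2\leq 2|(u)_{B_{R_K}}|^2+CR^{2-4s}\sup_{R'\in[R,r_0]}\mc{B}(R').\notag
\end{align}
The second term inherits the bound $\leq C(C\eps+\delta)R^{2-4s}$, and after multiplying by $r^{7s-2}R^{-3s}$ and using $(r/R)^{7s-2}\leq C$ on $R\geq r/4$ its contribution is $\leq C(C\eps+\delta)$. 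The leftover $|(u)_{B_{R_K}}|^2$ depends only on $u$ and $r_0$, and its prefactor $r^{7s-2}R^{-3s}\leq Cr^{4s-2}$ on $R\geq r/4$ sends this contribution to zero as $r\to 0$ since $s>1/2$. Taking $\limsup_{r\to 0}$ of the combined bound and then letting $\delta\to 0$ yields the claimed inequality.
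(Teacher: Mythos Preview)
Your argument is correct, but it differs from the paper's. The paper establishes a one-step decay inequality
\[
\mc{T}(\theta r)\leq \tfrac{1}{2}\,\mc{T}(r)+C\,\mc{B}(r)
\]
and then iterates as in Lemma~\ref{BB}. The key point in the paper is that the mean term is absorbed back into $\mc{T}$ itself: for $\rho\in[\theta/4,1/4]$ one writes $\mean{B_\rho}|u|^2\leq C\mean{B_\rho}|u-(u)_{B_1}|^2+C|(u)_{B_1}|^2$, bounds the oscillation by Poincar\'e on $B_1$, and simply observes that $|(u)_{B_1}|^2\leq \mean{B_1}|u|^2\leq \sup_{R\geq 1/4}R^{-3s}\mean{B_R}|u|^2=\mc{T}(1)$, since $R=1$ lies in the admissible range. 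Multiplying through by $\theta^{7s-2}$ gives $\mc{T}(\theta)\leq C\theta^{4s-2}\mc{T}(1)+C\theta^{4s-7}\mc{B}(1)$, and $4s-2>0$ closes the loop.

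You instead bypass the iteration by telescoping means across dyadic scales $R_k=2^kR$ up to a fixed $r_0$, summing the Poincar\'e increments via the convergent series $\sum_k 2^{k(1-2s)}$, and sending the residual term at scale $\sim r_0$ to zero using the prefactor $r^{4s-2}$. This is a genuinely different route: it trades the paper's self-referential step (and subsequent iteration) for an explicit telescoping sum. The paper's version is slightly shorter and parallels the structure of Lemmas~\ref{BB}, \ref{C}, \ref{D}; yours is more direct and makes transparent exactly where the condition $s>1/2$ enters (through the summability of $2^{k(1-2s)}$ and the decay of $r^{4s-2}$).
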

\begin{proof}  
    We claim there exists $\theta\in(0,1/4)$ such that
    \begin{align}
        \mc{T}(\theta r)\leq\frac{1}{2}\mc{T}(r)+C\mc{B}(r).\label{Tdec}
    \end{align}
    If so, then by applying Lemma \ref{BB} this would imply $\limsup_{r\rightarrow 0}\mc{T}(r)\leq C\limsup_{r\rightarrow 0}\mc{E}(r)$ by the same reasoning as given in the proof of Lemma \ref{BB}. Again it suffices to prove this for $r=1$ by scaling invariance. To this end we compute for $\rho\in[\frac{\theta}{4},\frac{1}{4}]$ that
    \begin{align}
        \mean{B_\rho}|u|^2\ dx&\leq C\mean{B_\rho}|u-(u)_{B_1}|^2\ dx+C\left|\mean{B_1}u\ dx\right|^2\notag
        \\
        &\leq C \frac{1}{\rho^5}\int_{B_\rho}|u-(u)_{B_1}|^2\ dx+C\sup_{R\geq\frac{1}{4}}\frac{1}{R^{3s}}\mean{B_R}|u|^2\ dx\notag
        \\
        &\leq C \frac{1}{\theta^5}\int_{B_1}|\nabla u|^2\ dx+C\sup_{R\geq\frac{1}{4}}\frac{1}{R^{3s}}\mean{B_R}|u|^2\ dx.\notag
        \intertext{Therefore}
        \sup_{R\geq\frac{\theta}{4}}\frac{1}{R^{3s}}\mean{B_R}|u|^2\ dx&\leq C\frac{1}{\theta^{3s}}\sup_{R\geq\frac{1}{4}}\frac{1}{R^{3s}}\mean{B_R}|u|^2\ dx+C\frac{1}{\theta^{5+3s}}\int_{B_1}|\nabla u|^2\ dx,\notag
    \end{align}
    and multiplying by $\theta^{7s-2}$ gives $\mc{T}(\theta)\leq C \theta^{4s-2}\mc{T}(1)+C\theta^{4s-7}\mc{B}(1)$,    so that we establish the claim and conclude the proof by taking $\theta$ sufficiently small.
\end{proof}

\begin{Lem}\label{C}
    Let $s\in(1,2)$. There exists a $C>0$ and $\eps>0$ such that for any function $u\in \dot{H}^s(\mb{R}^5)$, if $\limsup_{r\rightarrow 0}\mc{E}(r)<\eps$ then
    \begin{align}
        \limsup_{r\rightarrow 0}\mc{C}(r)\leq C\limsup_{r\rightarrow 0}\mc{E}(r).
    \end{align}
\end{Lem}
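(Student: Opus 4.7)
The plan is to derive the pointwise bound
\[ \mc{C}(r) \le C\bigl(\mc{B}(r)^{3/2} + \mc{T}(r)^{3/2}\bigr) \]
for every small $r$, then combine with Lemmas \ref{BB} and \ref{BT} to obtain $\limsup_{r\to 0}\mc{C}(r) \le C(\limsup_{r\to 0}\mc{E}(r))^{3/2}$, and finally use the smallness hypothesis to absorb the extra half power.

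The starting point is the decomposition $u = (u - (u)_{B_r}) + (u)_{B_r}$ on $B_r$, which gives
\[ \int_{B_r}|u|^3\,dx \le C\int_{B_r}|u - (u)_{B_r}|^3\,dx + Cr^5 |(u)_{B_r}|^3 . \]
The oscillation part is controlled by the Sobolev--Poincar\'e inequality in dimension $n=5$ (with critical exponent $2^\ast = 10/3$), namely $\|u - (u)_{B_r}\|_{L^{10/3}(B_r)} \le C\|\nabla u\|_{L^2(B_r)}$; combining with H\"older's inequality (exponents $10/9$ and $10$) yields
\[ \int_{B_r}|u - (u)_{B_r}|^3\,dx \le C\,|B_r|^{1/10}\|\nabla u\|_{L^2(B_r)}^3 \le C\,r^{1/2}\bigl(r^{5-4s}\mc{B}(r)\bigr)^{3/2} = C\,r^{8-6s}\mc{B}(r)^{3/2}. \]
For the mean part, since $r \ge r/4$ the supremum defining $\mc{T}$ controls $\mean{B_r}|u|^2 \le r^{2-4s}\mc{T}(r)$, so by Jensen
\[ r^5|(u)_{B_r}|^3 \le r^5\bigl(\mean{B_r}|u|^2\bigr)^{3/2} \le r^{5}\,r^{3-6s}\mc{T}(r)^{3/2} = r^{8-6s}\mc{T}(r)^{3/2}. \]
Multiplying by $r^{-8+6s}$ the two exponents cancel exactly, giving the desired pointwise bound.

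To finish, I take $\limsup$ as $r\to 0$, use the continuity and monotonicity of $x\mapsto x^{3/2}$ together with Lemmas \ref{BB} and \ref{BT} to conclude
\[ \limsup_{r\to 0}\mc{C}(r) \le C\bigl(\limsup_{r\to 0}\mc{E}(r)\bigr)^{3/2}. \]
Writing $(\limsup \mc{E})^{3/2} = (\limsup \mc{E})^{1/2}\cdot\limsup \mc{E} \le \eps^{1/2}\limsup\mc{E}$ under the hypothesis $\limsup \mc{E} < \eps$, and choosing $\eps$ small (say $\eps\le 1$) absorbs the half-power into the constant and yields $\limsup \mc{C} \le C\limsup \mc{E}$. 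There is no real obstacle here: the proof is essentially bookkeeping to verify that the dimensional exponents (Sobolev critical $10/3$ in $\mb{R}^5$, the powers $5-4s$ and $7s-2$ appearing in $\mc{B}$ and $\mc{T}$) conspire to give exactly the correct scaling $r^{8-6s}$ that cancels the prefactor of $\mc{C}$.
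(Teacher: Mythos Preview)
Your proof is correct and takes a genuinely different route from the paper's. The paper argues by iteration, as in Lemmas \ref{BB} and \ref{BT}: it proves a decay inequality of the form $\mc{C}(\theta)\le \tfrac12\mc{C}(1)+C\mc{B}(1)$ (after choosing $\theta$ small and using smallness of $\mc{B}$) via the decomposition $|u|^3=|u|\bigl(|u|^2-|(u)_1|^2\bigr)+|u|\,|(u)_1|^2$, H\"older, Sobolev--Poincar\'e, and an absorption step. In particular the paper never invokes $\mc{T}$ for this lemma; it relies only on $\mc{B}$ and Lemma \ref{BB}. Your argument instead produces a direct pointwise bound $\mc{C}(r)\le C\bigl(\mc{B}(r)^{3/2}+\mc{T}(r)^{3/2}\bigr)$ and then feeds in both Lemmas \ref{BB} and \ref{BT}. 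This is shorter and avoids any iteration for $\mc{C}$ itself, at the cost of using Lemma \ref{BT} as an additional black box; since Lemma \ref{BT} is already available, there is no loss. The smallness hypothesis enters in your argument only at the last line (to reduce $(\limsup\mc{E})^{3/2}$ to $\limsup\mc{E}$), whereas in the paper it is used structurally, to make the coefficient $C\theta^{3(6s-8)}\mc{B}(1)$ absorbable.
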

\begin{proof}
    We compute
    \begin{align}
        \int_{B_\theta}|u|^3\ dx&=\int_{B_\theta}|u|(|u|^2-|(u)_{B_1}|^2)\ dx+\int_{B_\theta}|u||(u)_1|^2\ dx\notag
        \\
        &\leq \left(\int_{B_1}|u|^3\ dx\right)^{1/3}\left(\int_{B_1}|u-(u)_{B_1}|^3\ dx\right)^{2/3}+|(u)_1|^2\left(\int_{B_\theta}|u|\ dx\right)\notag
        \\
        &\leq C \left(\int_{B_1}|u|^3\ dx\right)^{1/3}\left(\int_{B_1}|\nabla u|^2\ dx\right)+\theta^{10/3}\left(\int_{B_1}|u|^3\ dx\right)^{2/3}\left(\int_{B_\theta}|u|^3\ dx\right)^{1/3}\notag
        \\
        &\leq C \left(\int_{B_1}|u|^3\ dx\right)^{1/3}\left(\int_{B_1}|\nabla u|^2\ dx\right)+\frac{2}{3}\theta^5\int_{B_1}|u|^3\ dx+\frac{1}{3}\int_{B_\theta}|u|^3\ dx.\notag
    \end{align}
    Therefore we can absorb the last term on the right to obtain
    \begin{align}
        \mc{C}(\theta)&\leq C \theta^{6s-3}\mc{C}(1)+C\theta^{6s-8}\mc{C}(1)^{1/3}\mc{B}(1)\notag
        \\
        &\leq C\theta^{6s-3}\mc{C}(1)+C\theta^{3(6s-8)}\mc{C}(1)\mc{B}(1)+C\mc{B}(1).\notag
    \end{align}
    So if we choose $\eps$ sufficiently small and then $\theta$ sufficiently small we have
    \begin{align}
        \mc{C}(\theta)&\leq\frac{1}{2}\mc{C}(1)+C\mc{B}(1),
    \end{align}
    which implies the lemma.
\end{proof}

\begin{Lem}\label{D}
    Let $s\in(1,2)$. There exists a $C>0$ and $\epsilon>0$ such that for any suitable weak steady solution $(u,p)$ of \eqref{NS5}, if $\limsup_{r\rightarrow 0}\mc{E}(r)<\eps$ then
    \begin{align}
        \limsup_{r\rightarrow 0}\mc{D}(r)\leq C\limsup_{r\rightarrow 0}\mc{E}(r).
    \end{align}
\end{Lem}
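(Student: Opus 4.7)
The plan is to mimic the structure of Lemmas \ref{BT} and \ref{C}: establish a one-step decay inequality
\begin{equation}
	\mc{D}(\theta r)\leq \tfrac{1}{2}\mc{D}(r)+C\mc{C}(r)\notag
\end{equation}
for some $\theta\in(0,1/4)$, iterate it as in the proof of Lemma \ref{BB}, and then invoke Lemma \ref{C}. The essential input is the pressure equation. Since $\div u=0$ and $\div f=0$ (recall $f$ is taken divergence-free in \eqref{NS5}), and since $\div\,(-\Delta)^s u=(-\Delta)^s\div u=0$, taking the divergence of \eqref{NS5} gives the Poisson equation
\begin{equation}
	-\Delta p=\partial_i\partial_j(u_iu_j)\quad\text{in}\quad\mb{R}^5,\notag
\end{equation}
exactly as in the classical case. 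This is the key observation; the fractional dissipation plays no direct role in the pressure.

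Next I would perform a local Calder\'on--Zygmund splitting in the ball $B_r$, following the approach familiar from \cite{CKN,L,CDM}. Fix a cutoff $\chi\in C_0^\infty(B_r)$ with $\chi\equiv 1$ on $B_{r/2}$ and define
\begin{equation}
	p_1=R_iR_j(\chi\, u_iu_j),\qquad p_2=p-p_1,\notag
\end{equation}
where $R_i$ denotes the Riesz transform. By the standard $L^{3/2}$ boundedness of second Riesz transforms,
\begin{equation}
	\int_{B_r}|p_1|^{3/2}\ dx\leq C\int_{B_r}|u|^3\ dx.\notag
\end{equation}
In $B_{r/2}$ the function $p_2$ is harmonic, so interior mean-value estimates give, for any $\theta\in(0,1/4)$,
\begin{equation}
	\int_{B_{\theta r}}|p_2|^{3/2}\ dx\leq C\theta^5\int_{B_{r/2}}|p_2|^{3/2}\ dx\leq C\theta^5\int_{B_r}|p|^{3/2}\ dx+C\theta^5\int_{B_r}|u|^3\ dx.\notag
\end{equation}
Adding these two estimates and dividing by $(\theta r)^{8-6s}$ yields
\begin{equation}
	\mc{D}(\theta r)\leq C\theta^{6s-8}\mc{C}(r)+C\theta^{6s-3}\mc{D}(r).\notag
\end{equation}

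The key point is that $6s-3>3$ for $s\in(1,2)$, so the prefactor $C\theta^{6s-3}$ can be made smaller than $1/2$ by choosing $\theta$ sufficiently small, independently of $r$. This gives the desired one-step decay $\mc{D}(\theta r)\leq \tfrac{1}{2}\mc{D}(r)+C'\mc{C}(r)$. Iterating along $r_k=\theta^k r_0$ (starting from some $r_0$ at which $\mc{D}(r_0)<\infty$, which is guaranteed by $p\in L^{5/(5-2s)}(\mb{R}^5)$) and taking $\limsup$ along the lines of the proof of Lemma \ref{BB} produces
\begin{equation}
	\limsup_{r\rightarrow 0}\mc{D}(r)\leq 2C'\limsup_{r\rightarrow 0}\mc{C}(r).\notag
\end{equation}
Applying Lemma \ref{C} under the smallness assumption $\limsup_{r\rightarrow 0}\mc{E}(r)<\eps$ then concludes the proof.

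The main technical point to be careful about is checking that the absorption exponent $\theta^{6s-3}$ (and not some weaker one) really comes out of the decomposition; this is what forces the pressure comparison to be useful and distinguishes the argument from a naive Hausdorff--Young bound. Everything else—the $L^{3/2}$ Calder\'on--Zygmund estimate, the mean-value decay for harmonic functions, and the geometric iteration—is standard, and the divergence-free assumption on $f$ removes what would otherwise be the only remaining nuisance (a $\div f$ term in the pressure equation).
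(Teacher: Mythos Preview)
Your argument is correct and reaches the same conclusion, but it follows a slightly different route from the paper. The paper's proof subtracts the mean $(u)_{B_1}$ in the representation formula and decomposes $p=p_1+p_{2,1}+p_{2,2}$ into three pieces; the oscillation terms $p_{2,1}$ and $p_{2,2}$ are then controlled via the Poincar\'e inequality by $\int_{B_1}|\nabla u|^2$, giving
\[
\mc{D}(\theta)\leq C\theta^{6s-3}\mc{D}(1)+C\theta^{6s-3}\mc{B}(1)+C\theta^{6s-8}\mc{B}(1)^{3/2},
\]
and the proof closes through Lemma~\ref{BB}. You instead keep $u$ itself (no mean subtraction), use a two-piece splitting $p=p_1+p_2$, and land on the simpler one-step inequality $\mc{D}(\theta r)\leq C\theta^{6s-3}\mc{D}(r)+C\theta^{6s-8}\mc{C}(r)$, closing through Lemma~\ref{C}. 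Your version is marginally more economical (two pieces, no Poincar\'e step), while the paper's version feeds directly into $\mc{B}$ and hence into Lemma~\ref{BB}, which does not itself require smallness; in the end both routes invoke the smallness hypothesis at the final step (you via Lemma~\ref{C}, the paper to linearize the $\mc{B}^{3/2}$ term).
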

\begin{proof}
    Let $\varphi$ be a cutoff function between $B_{1/2}$ and $B_{3/4}$ and denote the fundamental solution of $\Delta$ in $\mb{R}^5$ by $-C_5\frac{1}{|x|^3}$, where $C_5>0$. We will decompose $p$ as in \cite{CKN}. First,
    \begin{align}
        p(x)\varphi(x)&=-C_5\int_{\mb{R}^5}\frac{1}{|x-y|^3}\Delta_y(p\varphi)\ dy\notag
        \\
        &=-C_5\int_{\mb{R}^5}\frac{1}{|x-y|^3}[p\Delta\varphi+2\nabla p\cdot \nabla\varphi+\Delta p]\ dy.\notag
    \end{align}
    So we may write $p=p_1+p_{2,1}+p_{2,2}$, with
\begin{subequations}
    \begin{align}
        p_1(x)&=C_5\int_{\mb{R}^5}\frac{p(y)}{|x-y|^3}\Delta\varphi(y)\ dy+6C_5\int_{\mb{R}^5}\frac{p(y)}{|x-y|^5}(x-y)\cdot\nabla\varphi(y)\ dy,
        \\
        p_{2,1}(x)&=C_5\int_{\mb{R}^5}\frac{1}{|x-y|^3}(u^i-u^i_1)(u^j-u^j_1)\nabla_{ij}\varphi\ dy\notag
        \\
        &\quad+2C_5\int_{\mb{R}^5}\nabla_i\left(\frac{1}{|x-y|^3}\right)(u^i-u^i_1)(u^j-u^j_1)\nabla_j\varphi\ dy,
        \\
        p_{2,2}(x)&=C_5\int_{\mb{R}^5}\nabla_{ij}\left(\frac{1}{|x-y|^3}\right)(u^i-u^i_1)(u^j-u^j_1)\varphi\ dy
    \end{align}
\end{subequations}
    For $p_1$, from the choice of cutoff function we have
    \begin{align}
        |p_1(x)|\leq C\left(\mean{B_1}|p|^{3/2}\ dx\right)^{2/3},\quad\text{for all }x\in B_{1/4},\notag
    \end{align}
    and therefore
    \begin{align}
        \theta^{-8+6s}\int_{B_\theta}|p_1|^{3/2}\ dx\leq C\theta^{-3+6s}\mc{D}(1).\notag
    \end{align}
    Continuing for $p_{2,1}$ we obtain
    \begin{align}
        |p_{2,1}(x)|&\leq C \int_{B_1}|u-u_1|^2\ dx\leq C\int_{B_1}|\nabla u|^2\ dx,\notag
        \\
        \theta^{-8+6s}\int_{B_\theta}|p_{2,1}|^{3/2}\ dx&\leq C\theta^{-3+6s}\mc{B}(1).\notag
    \end{align}
    Finally for $p_{2,2}$ we have by the Calder\'{o}n-Zygmund theorem that
    \begin{align}
        \int_{B_\theta}|p_{2,2}|^{3/2}\ dx&\leq\int_{B_1}|u-u_1|^3\ dx\leq C\left(\int_{B_1}|\nabla u|^2\ dx\right)^{3/2},\notag
        \\
        \theta^{-8+6s}\int_{B_\theta}|p_{2,2}|^{3/2}\ dx&\leq C\theta^{-8+6s}\mc{B}(1)^{3/2}.\notag
    \end{align}
    Putting everything together we conclude that
    \begin{align}
        \mc{D}(\theta)\leq C\theta^{-3+6s}\mc{D}(1)+C\theta^{-3+6s}\mc{B}(1)+C\theta^{-8+6s}\mc{B}(1)^{3/2},\notag
    \end{align}
    so if we choose $\theta$ sufficiently small we obtain
    \begin{align}
        \mc{D}(\theta)\leq\frac{1}{2}\mc{D}(1)+C\mc{B}(1)+C\mc{B}(1)^{3/2},
    \end{align}
    which implies the result of the lemma.
\end{proof}

\section*{Appendix}
\renewcommand{\theequation}{A.\arabic{equation}}
\setcounter{equation}{0}

We include here a short proof of Theorem \ref{supermain} which follows as a consequence of Theorem \ref{epsmain} by the standard covering argument in \cite{CKN}.

\begin{proof}[Proof of Theorem \ref{supermain}]
	By Theorem \ref{epsmain} we have that if
	\begin{align}
		\limsup_{r\rightarrow 0} r^{-7+6s}\int_{B_r^+(x)}y^b|\onabla(\nabla u)^\flat|^2\ dx\ dy<\eps
	\end{align}
	then $u$ is H\"{o}lder continuous in a neighborhood of $x$.
	Given $\delta>0$, for each $x\in\mc{S}$ take a $B_{r_x}^+(x)$ with $r_x<\delta$ such that
	\begin{align}
		r_x^{-7+6s}\int_{B_{r_x}^+(x)}y^b|\onabla(\nabla u)^\flat|^2\ dx\ dy>\eps.\notag
	\end{align}
	Let $V_\delta=\bigcup_x B_{r_x}^+(x)$, a neighborhood of $\mc{S}$. By the Vitali covering lemma there is a disjoint countable subfamily $\{B_{r_i}^+(x_i)\}$, where $r_i=r_{x_i}$, such that  such that $V_\delta\subseteq\bigcup_i B_{r_i}^+(x_i)$, and 
	\begin{align}
		\sum_i (5r_i)^{7-6s}&\leq 5^{7-6s}\sum_i\eps^{-1}\int_{B_{r_i}^+(x_i)}y^b|\onabla(\nabla u)^\flat|^2\ dx\ dy\leq 5^{7-6s}\eps^{-1}\int_{V_\delta}y^b|\onabla(\nabla u)^\flat|^2\ dx\ dy.\label{endapp}
	\end{align}
	Furthermore $\sum_i (5r_i)^6\leq \sum_i (5r_i)^{6s-1} (5r_i)^{7-6s}\leq(5\delta)^{6s-1}\sum_i (5r_i)^{7-6s}$, which implies that the Lebesgue measure of $V_\delta$ is bounded by
	\begin{align}
		\eps^{-1}5^6\delta^{6s-1}\int_{V_\delta}y^b|\onabla(\nabla u)^\flat|^2\ dx\ dy\leq\eps^{-1}5^6\delta^{6s-1}\int_{\mb{R}^{n+1}_+}y^b|\onabla(\nabla u)^\flat|^2\ dx\ dy.\notag
	\end{align}
	This implies that given $\tilde{\delta}>0$ there exists $\delta>0$ such that $\left|V_\delta\right|<\tilde{\delta}$. Therefore it follows from \eqref{endapp} that $\mc{H}^{7-6s}(\mc{S})=0$.
\end{proof}

\begin{Rem}
	We can observe that $y^b|\onabla(\nabla u)^\flat|^2$ is integrable by looking at its Fourier transform:
	\begin{align}
		\int_{\mb{R}^6_+} y^b|\onabla(\nabla u)^\flat|^2\ dx\ dy&\leq C\int_{\mb{R}^6_+}y^b\left(|\xi|^4|\hat{u}(\xi)\psi(|\xi|y)|^2+|\partial_y\hat{u}(\xi)\psi(|\xi| y)|^2\right)\ d\xi\ dy\notag
		\\
		&=C\int_{\mb{R}^6_+}y^b|\xi|^{4-1-b}|\hat{u}(\xi)|^2\left(|\psi(y)|^2+|\psi'(y)|^2\right)\ d\xi\ dy\notag
		\\
		&=C\int_{\mb{R}^5}|\xi|^{2s}|\hat{u}(\xi)|^2\ d\xi\notag
		\\
		&=C\|u\|_{\dot{H}^s}<\infty.
	\end{align}
\end{Rem}

\section*{Acknowledgments}
I would like to thank my advisor, Sun-Yung Alice Chang, for many helpful discussions and comments as well as her constant support and encouragement. I would also like to thank Maria Colombo and Huy Q. Nguyen for several helpful conversations. This material is based upon work supported by the National Science Foundation Graduate Research Fellowship under Grant No. DGE-1656466.

%\bibliographystyle{abbrv}
%\bibliography{Turbulence}

\end{document}